\DeclareMathOperator{\diag}{diag}
\DeclareMathOperator{\Res}{Res}
\DeclareMathOperator{\Sym}{Sym}
\DeclareMathOperator\Spec{Spec}
\theoremstyle{plain}%
\newtheorem{theorem}{Theorem}
\newtheorem*{theorem*}{Theorem}
\newtheorem{conjecture}[theorem]{Conjecture}
\newtheorem{corollary}[theorem]{Corollary}
\newtheorem{definition}[theorem]{Definition}
\newtheorem*{definition*}{Definition}
\newtheorem*{example*}{Example}
\newtheorem*{exercise*}{Exercise}
\newtheorem{formula}[theorem]{Formula}
\newtheorem{lemma}[theorem]{Lemma}
\newtheorem*{problem*}{Problem}
\newtheorem{proposition}[theorem]{Proposition}
\newtheorem{remark}[theorem]{Remark}
\newtheorem*{recall*}{Recall}
\newtheorem{fact}[theorem]{Fact}
\begin{document}

\title[On the Periods of Twisted Moments of the Kloosterman Connection]{On the Periods of Twisted Moments of the Kloosterman Connection}


\author*[1]{\fnm{Ping-Hsun} \sur{Chuang}}\email{f09221011@ntu.edu.tw}

\author[1]{\fnm{Jeng-Daw} \sur{Yu}}\email{jdyu@ntu.edu.tw}
\equalcont{The author contributed to the appendix.}

\affil*[1]{\orgdiv{Department of Mathematics}, \orgname{National Taiwan University}, \orgaddress{\city{Taipei}, \postcode{10617},  \country{Taiwan}}}


\abstract{This paper aims to study the Betti homology and de Rham cohomology of twisted symmetric powers of the Kloosterman connection of rank two on the torus. We compute the period pairing and, with respect to certain bases, interpret these associated period numbers in terms of the Bessel moments. Via the rational structures on Betti homology and de Rham cohomology, we prove the $\mathbb{Q}$-linear and quadratic relations among these Bessel moments.}

\keywords{Periods, Bessel moments, Kloosterman connection}


\pacs[MSC Classification]{32G20, 33C10, 34M35}

\maketitle


\section*{Statements and Declarations}

This research was supported by the grant 108-2115-M-002-003-MY2 of the Ministry of Science and Technology. The authors have no relevant financial or non-financial interests to disclose.


\section{Introduction}
Let $\mathbb{G}_{\mathrm{m},z}=\Spec\,(\mathbb{Q}[z,z^{-1}])$
be the algebraic torus over $\mathbb{Q}$ with variable $z$,
and similarly for the torus
$\mathbb{G}_{\mathrm{m},t}$ with variable $t$.
Let ${\mathrm{Kl}}_{2}$ be the Kloosterman connection (of rank two) on
$\mathbb{G}_{\mathrm{m},z}$ corresponding to the differential operator $(z\partial_{z})^{2}-z$. (For details, see section \ref{section_recall_and_pairing}.) In \cite{MR4432013}, in order to study the
Hodge aspects of the symmetric powers $\Sym^{k}{\mathrm{Kl}}_{2}$, Fres\'{a}n, Sabbah, and Yu consider the following settings. Let $\left[2\right]:\mathbb{G}_{\mathrm{m},t}\rightarrow \mathbb{G}_{\mathrm{m},z}$ be the double cover induced by the ring homomorphism $\mathbb{Q}[z,z^{-1}]\rightarrow\mathbb{Q}[t,t^{-1}]$, given by $z\mapsto t^{2}$. One obtains the pullback connection
\[
\widetilde{{\mathrm{Kl}}}_{2}=\left[2\right]^{+}{\mathrm{Kl}}_{2}.
\]
The structure of $\widetilde{{\mathrm{Kl}}}_{2}$ is much simpler since it is the restriction to $\mathbb{G}_{\mathrm{m}}$ of the Fourier transform of a regular holonomic module on the affine line. In addition, the symmetric power $\Sym^{k}{\mathrm{Kl}}_{2}$ appears in the pushforward $\left[2\right]_{+}\Sym^{k}{\widetilde{\mathrm{Kl}}}_{2}$ naturally in the decomposition \cite[p. 1662]{MR4432013}
\[
\left[2\right]_{+}\Sym^{k}{\widetilde{\mathrm{Kl}}}_{2}\cong \Sym^{k}{\mathrm{Kl}}_{2}\oplus \sqrt{z}\Sym^{k}{\mathrm{Kl}}_{2},
\]
where $\sqrt{z}\Sym^{k}{\mathrm{Kl}}_{2}=\left(\mathcal{O}_{\mathbb{G}_{\mathrm{m}}},{\mathrm{d}}+\frac{{\mathrm{d}}z}{2z}\right)\otimes \Sym^{k}{\mathrm{Kl}}_{2}$. In \cite{MR4578001}, Fres\'{a}n, Sabbah, and Yu compute the de Rham cohomology and Betti homology for $\Sym^{k}{\mathrm{Kl}}_{2}$. In this paper, we study the analogues for $\sqrt{z}\Sym^{k}{\mathrm{Kl}}_{2}$.

\subsection{Historical results and our results}
Let $I_{0}(t)$ and $K_{0}(t)$ be modified Bessel functions. Define the Bessel moments
\begin{align}\label{Bessel_moments}
{\mathrm{IKM}}_{k}(a,b)=\int_{0}^{\infty}I_{0}(t)^{a}K_{0}(t)^{k-a}t^{b}{\mathrm{d}}t
\end{align}
provided that $0\leq a\leq k$ are non-negative integers, $b\in\mathbb{Z}$, and the convergence of this integral. The particular Bessel moments of the form
${\mathrm{IKM}}_{a+b}(a,2c-1)$
appear in two-dimensional quantum field theory as Feynman integrals \cite{MR3573666,MR2450513,Broadhurst:2016hbq}. From a mathematical point of view, these moments are realized as period integrals of $\Sym^k\mathrm{Kl}_{2}$ and $\sqrt{z}\Sym^{k}\mathrm{Kl}_{2}$. For the details, we refer to \cite{MR4578001}. In that paper, Fres\'{a}n, Sabbah, and Yu developed the Hodge theory on
symmetric powers of the generalized Kloosterman connection $\mathrm{Kl}_{n+1}$ of rank $(n+1)$. 

\subsubsection*{Sum rule identities} In \cite[(220)]{MR2450513}, the authors provide the following conjecture on the $\mathbb{Q}(\pi)$ linear relation of Bessel moments which is called
the ``sum rule'' in their paper.
\begin{conjecture}
For each pair of integers $\left(n,k\right)$ with $n\geq 2k\geq 2$, the following combination of Bessel moments vanish
\[
\sum_{m=0}^{\lfloor n/2\rfloor}(-1)^{m}\binom{n}{2m}\pi^{n-2m}{\mathrm{IKM}}_{2n}(n-2m,n-2k)=0.
\]
\end{conjecture}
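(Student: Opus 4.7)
The plan is to recast the alternating sum as a single period integral on the double cover $\mathbb{G}_{\mathrm{m},t}$ and then use the $\iota$-eigenspace decomposition arising from
\[
[2]_+\Sym^{2n}\widetilde{\mathrm{Kl}}_2 \;\cong\; \Sym^{2n}\mathrm{Kl}_2 \,\oplus\, \sqrt{z}\Sym^{2n}\mathrm{Kl}_2,
\]
where $\iota:t\mapsto -t$ is the covering involution.

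First, I would collapse the binomial sum by recognizing $\sum_{m}(-1)^m\binom{n}{2m}\pi^{n-2m}I_0^{n-2m}K_0^{2m}$ as the real part of $(\pi I_0(t)+iK_0(t))^n$. The monodromy relation $K_0(-t+i0)=K_0(t)-i\pi I_0(t)$ then yields $\pi I_0(t)+iK_0(t)=iK_0(-t+i0)$, and the sum rule is equivalent to the vanishing of
\begin{equation*}
\mathrm{Re}\!\left(i^n\int_0^\infty K_0(t)^n\,K_0(-t+i0)^n\,t^{n-2k}\,\mathrm{d}t\right).
\end{equation*}
Asymptotic estimates for $K_0$ show that the integrand decays like $t^{-2k}$ at infinity and has only logarithmic singularities at $0$, so the integral converges precisely under the hypothesis $n\geq 2k\geq 2$.

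Next, I would interpret this integral as a pairing $\langle\gamma,\omega\rangle$ of a Betti 1-cycle with a de Rham class on the connection $\Sym^{2n}\widetilde{\mathrm{Kl}}_2$ over $\mathbb{G}_{\mathrm{m},t}$: the cycle $\gamma$ is supported on $(0,\infty)$ with flat section $K_0(t)^n\otimes K_0(-t+i0)^n$, and the class $\omega$ is represented by $t^{n-2k}\,\mathrm{d}t$ tensored against the dual. Matching with the explicit bases of Section~\ref{section_recall_and_pairing} allows both $\omega$ and $\gamma$ to be decomposed into $\pm 1$-eigenparts of $\iota^{\ast}$, which under pushforward correspond to the two summands of $[2]_+\Sym^{2n}\widetilde{\mathrm{Kl}}_2$. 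A direct computation gives $f(-t+i0)=(-1)^{n}f(t)$ for the integrand $f$, so the 1-form $f\,\mathrm{d}t$ has $\iota$-parity $(-1)^{n+1}$; combined with the prefactor $i^n$, taking the real part projects the resulting class into the $\iota$-eigenspace opposite to the one containing $\gamma$. This forces $\langle\gamma,\mathrm{Re}(i^n\omega)\rangle=0$ and hence proves the sum rule.

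The main obstacle is the sign bookkeeping in the parity step: one must certify that the three independent signs — the $i^n$ prefactor, the parity $(-1)^n$ induced by $t\mapsto -t$, and the monodromy sign built into the $\sqrt{z}$-twist — conspire to place $\mathrm{Re}(i^n\omega)$ and $\gamma$ in complementary $\iota$-eigenspaces for every admissible pair $(n,k)$ with $n\geq 2k\geq 2$. A single misplaced sign would replace the proposed vanishing with a nontrivial quadratic relation among Bessel moments. Making this step rigorous demands a careful audit of the rational Betti and de Rham structures for $\sqrt{z}\Sym^{k}\mathrm{Kl}_2$ constructed earlier in the paper, and in particular of how the involution $\iota^{\ast}$ acts on the basis of $H_{1}^{B}$ and $H_{\mathrm{dR}}^{1}$ of the pulled-back connection.
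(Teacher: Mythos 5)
Your opening reduction is correct and is the standard first step (it is essentially Zhou's): the alternating sum equals
$\mathrm{Re}\bigl(i^{n}\int_{0}^{\infty}K_{0}(t)^{n}K_{0}(-t+i0)^{n}t^{\,n-2k}\,\mathrm{d}t\bigr)$,
and the convergence analysis under $n\geq 2k\geq 2$ is fine. The gap is in the vanishing mechanism. First, the asserted parity $f(-t+i0)=(-1)^{n}f(t)$ is false: continuing $t\mapsto e^{i\pi}t$ sends $K_{0}(t)\mapsto K_{0}(t)-i\pi I_{0}(t)$ and $K_{0}(-t+i0)\mapsto K_{0}(t)-2i\pi I_{0}(t)$, so
$f(e^{i\pi}t)=(-1)^{n-2k}\,(K_{0}-i\pi I_{0})^{n}(K_{0}-2i\pi I_{0})^{n}t^{\,n-2k}\neq(-1)^{n}f(t)$.
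Because the local monodromy of $K_{0}$ at $0$ is unipotent of infinite order, the flat section carried by your cycle is not an eigenvector of the deck transformation $\iota$, so the three-sign bookkeeping you flag as the ``main obstacle'' cannot be repaired. Second, and more fundamentally, $\iota$-eigenspace orthogonality can only kill cross-pairings $\langle\gamma^{+},\omega^{-}\rangle$; after decomposing $\gamma=\gamma^{+}+\gamma^{-}$ and $\omega=\omega^{+}+\omega^{-}$ one is left with $\langle\gamma^{+},\omega^{+}\rangle+\langle\gamma^{-},\omega^{-}\rangle$, and nothing in your argument shows these diagonal terms vanish. The sum rule is a genuine linear relation among periods, and it requires a nontrivial homological input: a specific rational combination of moderate-decay cycles must actually bound.

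That is precisely what the paper supplies and what your proposal is missing. Lemma \ref{kernel_cycle} exhibits an explicit moderate-growth $2$-chain $\Delta=\rho\otimes\frac{1}{\sqrt{z}}(e_{1}-e_{0})^{k/2}e_{1}^{k/2}$ covering $\mathbb{C}$ once, whose boundary is $\sum_{i}(1+(-1)^{i})\binom{k/2}{i}\gamma_{i}$; pairing this nullhomologous combination against the compactly supported de Rham classes via Proposition \ref{compact_support_period} yields Corollary \ref{linear_relation_Bessel_moments}, which is the even-exponent half of the stated conjecture (the odd-exponent half, where $n$ is odd, is quoted from the untwisted case treated in \cite{MR4578001}, and the full statement was first proved analytically in \cite{MR3902501}). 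If you want to salvage your contour-based formulation, the correct continuation of your first displayed integral is not a parity argument but a deformation of the chain $(0,\infty)\cup e^{i\pi}(0,\infty)$ across the upper half plane, using the polynomial decay of $I_{0}K_{0}$ there — which is exactly the content of the $2$-chain $\Delta$.
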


Later in \cite[(1.5)]{MR3902501}, Zhou uses the Hilbert transformation to prove this conjecture. Moreover, he also proves a ``sum rule'':
\begin{formula}
For each pair of integers $\left(n,k\right)$ with $n-1\geq 2k\geq 2$, the following conbination of Bessel moments vanish
\[
\sum_{m=1}^{\lfloor (n+1)/2\rfloor}(-1)^{m}\binom{n}{2m-1}\pi^{n-2m+1}{\mathrm{IKM}}_{2n}(n-2m+1,n-2k-1)=0.
\]
\end{formula}

When the involved exponents of $t$ in \eqref{Bessel_moments} are odd, these two identities are both reproved by Fres\'{a}n, Sabbah, and Yu \cite{MR4578001}. The proof is to study the connection $\Sym^{k}\mathrm{Kl}_{2}$ on $\mathbb{G}_{\mathrm{m}}$ whose period integrals are those Bessel moments $\mathrm{IKM}_{k}(a,b)$ with odd $b$. In this paper, by studying the connection $\sqrt{z}\Sym^{k}\mathrm{Kl}_{2}$ on $\mathbb{G}_{\mathrm{m}}$, we provide proofs of these two identities involving even powers of $t$ using a similar approach in section \ref{section_period_pairing}. The key point to consider the twisted connection $\sqrt{z}\Sym^{k}\mathrm{Kl}_{2}$ is that the period integrals of $\sqrt{z}\Sym^{k}\mathrm{Kl}_{2}$ are those Bessel moments $\mathrm{IKM}_{k}(a,b)$ with even $b$. For example, we have the following result:

\begin{formula}[Corollary \ref{linear_relation_Bessel_moments}]\label{formula:Q-linear_relation}
For $k=4r+4$, a multiple of $4$, one has
\[
\sum_{j=0}^{r}\binom{k/2}{2j}(-1)^{j}\pi^{2j}{\mathrm{IKM}}_{k}(2j,2i)
= \begin{cases}
(-1)^r\pi^{2r+2}{\mathrm{IKM}}_{k}(2r+2,2i) & \text{if $0\leq i\leq r$}, \\
(-1)^r\pi^{2r+2}{\mathrm{IKM}}_{k}^{\mathrm{reg}}(2r+2,2i)
	& \text{if $r+1\leq i\leq \lfloor\frac{k-1}{2}\rfloor$}.
\end{cases}
\]
\end{formula}
The notation $\mathrm{IKM}^{\mathrm{reg}}_{k}(2r+2,2i)$ above denotes the regularized Bessel moments (see Lemma \ref{regularized_lemma}). Roughly speaking, the regularized Bessel moments are obtained from those Bessel moments $\mathrm{IKM}_{k}(a,b)$ with parameters $k,a,b$ that makes $\mathrm{IKM}_{k}(a,b)$ diverge but minus their divergent asymptotic. Therefore, our sum rule generalizes the sum rules in \cite{MR3902501, MR2450513}.

\subsubsection*{$\mathbb{Q}$-dimension of Bessel moments} In \cite{MR4437425}, Zhou considers the $\mathbb{Q}$-vector subspace spanned by the Bessel moments in $\mathbb{C}$. This vector subspace is finite-dimensional due to the sum rule. Similarly, we have the following upper bound of the dimension.
\begin{theorem}[Corollary \ref{cor_bessel_dim_upper_bound}]\label{theorem:Q-dimension}
For any $k$ and any $0\leq a\leq \lfloor (k-1)/2\rfloor$,
the dimension of the $\mathbb{Q}$-vector space generated by the Bessel moments has an upper bound:
\[
\dim {\mathrm{span}}_{\mathbb{Q}}\left\{{\mathrm{IKM}}_{k}(a,2j)\mid j\in\left\{0\right\}\cup \mathbb{N}\right\}\leq \lfloor (k+1)/2 \rfloor.
\]
For $k$ even, the dimension of the $\mathbb{Q}$-vector space generated by the regularized Bessel moments has an upper bound:
\[
\dim {\mathrm{span}}_{\mathbb{Q}}\left\{{\mathrm{IKM}}^{\mathrm{reg}}_{k}\left(k/2,2j\right)\mid j\in\left\{0\right\}\cup \mathbb{N}\right\}\leq \lfloor (k+1)/2\rfloor.
\]
\end{theorem}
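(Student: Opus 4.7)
The plan is to realize each Bessel moment $\mathrm{IKM}_{k}(a,2j)$ (and its regularized variant) as a period pairing
\[
\bigl\langle \gamma_{a},\,[\omega_{j}] \bigr\rangle
\]
between a fixed Betti homology class $\gamma_{a}$ and a varying de Rham class $[\omega_{j}]$ living in the finite-dimensional vector space $H^{1}_{\mathrm{dR}}(\mathbb{G}_{\mathrm{m}}, \sqrt{z}\Sym^{k}\mathrm{Kl}_{2})$. Once this is set up, the desired $\mathbb{Q}$-dimension bound reduces to bounding $\dim H^{1}_{\mathrm{dR}}$ by $\lfloor (k+1)/2\rfloor$, because $\mathbb{Q}$-linear relations among the $[\omega_{j}]$ in cohomology translate, via the pairing, to $\mathbb{Q}$-linear relations among the corresponding Bessel moments.

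First, I would recall the explicit description of $\sqrt{z}\Sym^{k}\mathrm{Kl}_{2}$ and the computation of its period pairing carried out earlier in section \ref{section_period_pairing}. In particular, multiplication by $t^{2j}$ on a fixed generator produces a family of de Rham classes $[\omega_{j}]$, and the rapid-decay cycle built from the flat section $I_{0}^{a}K_{0}^{k-a}$ yields, up to a rational multiple, the period $\mathrm{IKM}_{k}(a,2j)$. The convergence constraint $0\le a\le\lfloor(k-1)/2\rfloor$ is exactly what guarantees that this cycle lies in the rapid-decay homology.

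Second, I would invoke the de Rham computation of the paper to show that $H^{1}_{\mathrm{dR}}(\mathbb{G}_{\mathrm{m}}, \sqrt{z}\Sym^{k}\mathrm{Kl}_{2})$ has dimension at most $\lfloor(k+1)/2\rfloor$. Concretely, I would present the connection cyclically so that every class $[\omega_{j}]$ with $j\ge \lfloor(k+1)/2\rfloor$ is reduced modulo exact forms to a $\mathbb{Q}$-linear combination of $[\omega_{0}],\dots,[\omega_{\lfloor(k-1)/2\rfloor}]$ by repeated application of the connection relation coming from the operator $(z\partial_{z})^{2}-z$ twisted by $\tfrac{\mathrm{d}z}{2z}$. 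For the regularized case ($k$ even, $a=k/2$), the same cohomology controls the pairing, provided we pair instead with a moderate-growth chain dual to the rapid-decay cycles; the equality between this pairing and the regularized integral is precisely the content of Lemma \ref{regularized_lemma}.

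The main obstacle I expect lies in step two: explicitly producing the reduction relations in the twisted Weyl algebra and verifying the counting that yields the sharp bound $\lfloor(k+1)/2\rfloor$, which requires careful bookkeeping of the symmetric-power combinatorics together with the extra twist by $\mathrm{d}z/2z$. Once the de Rham cohomology is pinned down, the transfer to Bessel moments via period pairing is automatic, and the regularized statement follows from the same cohomological dimension bound combined with the period pairing formalism already in place.
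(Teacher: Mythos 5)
Your proposal is correct and follows essentially the same route as the paper: realize $\mathrm{IKM}_{k}(a,2j)$ as the period pairing of a fixed (rapid decay, resp.\ moderate decay) cycle against the classes $v_{0}^{k}z^{j}\frac{\mathrm{d}z}{z}$ (resp.\ $\widetilde{\omega}_{k,j}$), use that $H^{1}_{\mathrm{dR}}$ and $H^{1}_{\mathrm{dR,c}}$ have dimension $\lfloor(k+1)/2\rfloor$, and transport the resulting $\mathbb{Q}$-linear relations through the pairing (the paper gets the dimension from Katz's Euler characteristic formula plus the non-degenerate Poincar\'{e} pairing rather than an explicit cyclic-vector reduction, but you defer to that computation anyway). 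The only slight inaccuracy is attributing the identification of the moderate-decay pairing with the regularized integrals to Lemma \ref{regularized_lemma}; that identification is the content of Proposition \ref{compact_support_period}, while the lemma only defines the regularization.
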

Note that our statement involves the regularized Bessel moments. This conclusion is a more general result than the one given by Zhou.

\subsubsection*{Quadratic relations of Bessel moments}
In \cite{MR4601771}, the authors prove a general result of quadratic relations between periods given by a self-dual connection. We apply this result and obtain the quadratic relation between the Bessel moments. Under certain bases of cohomologies, let $B$ be the topological pairing matrix on Betti homology, $D$ be the Poincar\'{e} pairing matrix on de Rham cohomology, and $P,P_{\mathrm{c}}$ be the period pairing matrices between these two homology and cohomology. Then, the quadratic relations on these periods (Bessel moments) are given by
\begin{align}\label{quadratic_relation_formula}
PD^{-1}P_{\mathrm{c}}=(-1)^{k}(2\pi\sqrt{-1})^{k+1}B.
\end{align}
The entries of the matrices $P$ and $P_{\mathrm{c}}$ consist of $\mathbb{Q}$-linear combinations of Bessel moments and regularized Bessel moments,
which are obtained in Section \ref{section_period_pairing}.
Moreover, due to the rational structure of Betti homology and de Rham cohomology, the corresponding pairing matrices $D, B$ consist of rational numbers.

Note that the quadratic relations among Bessel moments does not depend on the choices of the bases. The effect of changing the bases of these homologies and cohomologies is just the conjugation of the matrices $P,D,P_{\mathrm{c}},B$ and thus on \eqref{quadratic_relation_formula}.

\subsubsection*{Determinants of Bessel moment matrix} Another interesting result is to compute the determinants of certain matrices consisting of Bessel moments. In \cite[Conjectures 4 and 7]{MR3573666}, Broadhurst conjectures closed formulae of the determinants of the following two $r\times r$ matrices $\mathbf{M}_{r}$ and $\mathbf{N}_{r}$ involving the Bessel moments:
\[ \mathbf{M}_{r} =\big({\mathrm{IKM}}_{2r+1}(a,2b-1)\big)_{1\leq a,b\leq r},
\quad
\mathbf{N}_{r} =\big({\mathrm{IKM}}_{2r+2}(a,2b-1)\big)_{1\leq a,b\leq r}. \]
Later, in \cite{MR3824579}, Zhou uses an analytic method to prove these two determinant formulae. Using a similar method as Zhou, we give explicit determinant formulae:
\begin{formula}[Corollary \ref{period_determinant}]\label{formula:determinant_formula}
For $r\geq 1$, we have
\begin{align*}
\det \big({\mathrm{IKM}}_{2r-1}(i-1,2j-2)\big)_{1\leq i,j\leq r}
&=\sqrt{\pi}^{r(r+1)}\sqrt{2}^{r(r-3)}\prod_{a=1}^{r-1}\dfrac{a^{r-a}}{\sqrt{2a+1}^{2a+1}},\\
\det \big({\mathrm{IKM}}_{2r}(i-1,2j-2)\big)_{1\leq i,j\leq r}
&=\dfrac{\sqrt{\pi}^{(r+1)^{2}}}{\Gamma\left(\frac{r+1}{2}\right)}\dfrac{1}{\sqrt{2}^{r(r+3)}}\prod_{a=1}^{r-1}\dfrac{(2a+1)^{r-a}}{(a+1)^{a+1}}.
\end{align*}
\end{formula}

\subsection{Approach}

In \cite{MR2143558}, Bloch and Esnault study irregular connections on curves and provide the associated homology theory. Due to their results, we study the de Rham cohomology and Betti homology of $\sqrt{z}\Sym^{k}{\mathrm{Kl}}_{2}$ on $\mathbb{G}_{\mathrm{m}}$
and provide explicit bases in order to find the periods.

In section \ref{section_recall_and_pairing}, we introduce the twisted $k$-th symmetric power of the Kloosterman connection $\sqrt{z}\Sym^{k}{\mathrm{Kl}}_{2}$, which is the main object in this paper. We discuss the rational structures on the de Rham cohomology and Betti homology of the connection. Moreover, since the connection is self-dual, we introduce its algebraic and topological self-pairings. These pairings will play an important role in our computations.

In section \ref{section_de_Rham_side}, we study the de Rham cohomology and the de Rham cohomology with compact support of $\sqrt{z}\Sym^{k}{\mathrm{Kl}}_{2}$ and write down certain elements in these two cohomologies. Next, we introduce the Poincar\'{e} pairing between them and compute the pairing with respect to the elements we have constructed. Using the dimension result of de Rham cohomology, along with the non-vanishing determinant of the Poincar\'{e} pairing, in Corollary \ref{de_rham_basis}, we conclude that the explicit elements in de Rham cohomology form bases.

We study parallelly the Betti homology of $\sqrt{z}\Sym^{k}{\mathrm{Kl}}_{2}$ in section \ref{section_Betti_side}. Since our ambient space is a non-compact space $\mathbb{C}^{\times}$, we need to modify our Betti homology theory by allowing the chain to go to $0$ or $\infty$. By controlling the growth behaviors of the horizontal sections, we study the moderate decay Betti homology and rapid decay Betti homology on $\sqrt{z}\Sym^{k}{\mathrm{Kl}}_{2}$. Similarly, We first write down some elements in the moderate decay homology and rapid decay homology and compute their topological pairing explicitly. Moreover, by the duality of de Rham cohomology and Betti homology, the dimension of Betti homology is the same as the de Rham cohomology. Together with the topological pairing, we conclude that they are bases in Corollary \ref{Betti_basis_assume_dim}.

Finally, in section \ref{section_period_pairing}, we compute the period pairing between the de Rham cohomologies and the Betti homologies and interpret them in terms of the Bessel moments. Note that our variety $\mathbb{G}_{\mathrm{m}}=\Spec\mathbb{Q}[z,z^{-1}]$ and the connection $\sqrt{z}\Sym^{k}{\mathrm{Kl}}_{2}$ are defined over $\mathbb{Q}$ and therefore, the de Rham cohomology and Betti homology are naturally endowed with a $\mathbb{Q}$-vector space structure. From the dimension constraint of homologies, after computing the period pairing, we obtain the $\mathbb{Q}$-linear relation of Bessel moments (Formula \ref{formula:Q-linear_relation}) and an upper bound of $\mathbb{Q}$-dimension of space spanned by the Bessel moments (Theorem \ref{theorem:Q-dimension}). In addition, the self duality of $\sqrt{z}\Sym^{k}{\mathrm{Kl}}_{2}$ gives quadratic relations between these Bessel moments \eqref{quadratic_relation_formula}.

In appendix \ref{section_symmetric_power_of_differential_operator}, we provide an accurate analysis of the symmetric powers of the modified Bessel differential operator. The first usage belongs to section \ref{section_de_Rham_side}, which enables us to determine the dimension of the de Rham cohomology $H^{1}_{\mathrm{dR}}\big(\mathbb{G}_{\mathrm{m}},\sqrt{z}\Sym^{k}{\mathrm{Kl}}_{2}\big)$. The second usage belongs to appendix \ref{section_period_determinant}, which allows us to analyze the leading term of the Vanhove operator. This helps us to obtain the determinant formula (Formula \ref{formula:determinant_formula}). 

\section{The Kloosterman connection and its twisted symmetric powers}\label{section_recall_and_pairing}

In this section, we recall the definition and basic properties of Kloosterman connection and its symmetric powers from \cite{MR4432013,MR4578001}. Besides, we recall the twisted connection on $\mathbb{G}_{\mathrm{m}}$ obtained from the decomposition of the pushforward of trivial connection under the cyclic cover of $\mathbb{G}_{\mathrm{m}}$. Combining these connections, we obtain the twisted symmetric powers of the Kloosterman connection. Moreover, since these connections are all self-dual, the duality induces the algebraic pairings on them and the topological pairings on the sheaves of horizontal sections. 

\subsection{Self-duality and pairing on $\mathrm{Kl}_{2}$}\label{section_recall_dual_and_pairing_Kl2}

The connection ${\mathrm{Kl}}_{2}=(\mathcal{O}_{\mathbb{G}_{\mathrm{m},z}}v_{0}\oplus\mathcal{O}_{\mathbb{G}_{\mathrm{m},z}}v_{1},\nabla)$
consists of a rank $2$ free sheaf on $\mathbb{G}_{\mathrm{m},z}=\Spec\mathbb{Q}[z,z^{-1}]$ with basis of sections $v_{0}$ and $v_{1}$ and the connection $\nabla$ on it given by
\begin{align*}
z\nabla\left(v_{0},v_{1}\right) = \left(v_{0},v_{1}\right)\begin{pmatrix}
0 & z\\
1 & 0
\end{pmatrix}{\mathrm{d}}z.
\end{align*}
That is, $z\nabla v_{0} = v_{1}\mathrm{d}z$ and $\nabla v_{1} = v_{0}\mathrm{d}z$. The connection $\mathrm{Kl}_{2}$ is self-dual in the sense that there exists an algebraic horizontal pairing $\langle\ ,\, \rangle_{\text{alg}}$ on it:
\[
\left(\left\langle v_{i},v_{j}\right\rangle_{\mathrm{alg}}\right)_{0\leq i,j\leq 1}=
\begin{pmatrix}
0 & 1\\ 
-1 &0
\end{pmatrix}
\]
such that $\lambda :\mathrm{Kl}_{2}\rightarrow\mathrm{Kl}_{2}^{\vee}$ by $\left(v_{0},v_{1}\right)\mapsto \left(v_{1}^{\vee},-v_{0}^{\vee}\right)$ makes the following diagram commute.
\[
\begin{tikzcd}[ampersand replacement=\&]
{\mathrm{Kl}}_{2}\times {\mathrm{Kl}}_{2} \arrow[rr, "{\langle\ ,\, \rangle_{\text{alg}}}"] \arrow[d, "\lambda\times 1"'] \& \& (\mathcal{O}_{\mathbb{G}_{\mathrm{m}}},\mathrm{d}) \\
{\mathrm{Kl}}_{2}^{\vee}\times {\mathrm{Kl}}_{2} \arrow[rru, "{\text{natural}}"']                                                  \&       \&    
\end{tikzcd}
\]
Here ${\mathrm{Kl}}_{2}^{\vee}$ denotes the dual connection
with the dual basis $\{v_0^{\vee},v_1^{\vee}\}$.

Recall that the modified Bessel functions $I_{0}(t)$ and $K_{0}(t)$ satisfy the differential equation $((t\partial_{t})^{2}-t^{2})y=0$ and the Wronskian relation
\begin{equation}\label{eq:Wronskian_IK}
I_0(t)K_0'(t) - I_0'(t)K_0(t) = \frac{-1}{t}.
\end{equation} 
Under the change of variable $z=\frac{t^{2}}{4}$, the differential equation $((t\partial_{t})^{2}-t^{2})y=0$ becomes $4((z\partial_{z})^{2}-z)y=0$. Define $A_{0},B_{0}$ be the fundamental solutions to the differential equation $((z\partial_{z})^{2}-z)y=0$ by rescaling the modified Bessel functions. In addition, define $A_{1},B_{1}$ by $z\partial_{z}$ differential of $A_{0},B_{0}$:  
\begin{align*}
A_{0}(z)=-2I_{0}(2\sqrt{z}),&\qquad 
A_{1}(z)=z\partial_{z}A_{0}(z);\\
B_{0}(z)=2K_{0}(2\sqrt{z}), &\qquad
B_{1}(z)=z\partial_{z}B_{0}(z).
\end{align*}
Here, the function $\sqrt{z}$ is taken to be the principal branch on the range $|\arg z\,|<\pi$. For other $z$, these functions are defined via the analytic continuation. Throughout this paper, the multivalued functions such as $z^{k/2}$ or $z^{-1/4}$ are all treated in this way without a mention. The functions $A_{0}(z)$ and $B_{0}(z)$ are annihilated by the operator $\left(z\partial_{z}\right)^{2}-z$ and real-valued on the ray $\mathbb{R}_{>0}$. This gives 
\[ \partial_{z}A_{1}(z)=A_{0}(z),\qquad\partial_{z}B_{1}(z)=B_{0}(z). \]
Together with the Wronskian relation
$A_{0}B_{1} - A_{1}B_{0} = 2$ from \eqref{eq:Wronskian_IK}, we obtain a basis of horizontal sections of $\nabla$ on ${\mathrm{Kl}}_{2}$ 
\begin{align}\label{horizontal_sections_Kl2}
e_{0}=\frac{1}{2}(A_{0}v_{1}-A_{1}v_{0}),\qquad e_{1}=\frac{1}{2\pi\sqrt{-1}}(B_{0}v_{1}-B_{1}v_{0}).
\end{align}
Denote $\mathrm{Kl}_{2}^{\nabla}$ the local system of $\mathbb{Q}$-vector space generated by $e_{0},e_{1}$. There exists a topological pairing $\langle\ ,\, \rangle_{\mathrm{top}}=2\pi\sqrt{-1}\langle\ ,\, \rangle_{\mathrm{alg}}$ on $\mathrm{Kl}_{2}^{\nabla}$:
\[
\left(\left\langle e_{i},e_{j}\right\rangle_{\mathrm{top}}\right)_{0\leq i,j\leq 1}=
\begin{pmatrix}
0 & 1\\ 
-1 &0
\end{pmatrix}.
\]

\subsection{Rational structures and pairings on $(\mathcal{O}_{\mathbb{G}_{\mathrm{m}}},\mathrm{d}+\frac{1}{2}\frac{\mathrm{d}z}{z})$.} 
Consider the double cover $\left[2\right]:\mathbb{G}_{\mathrm{m},t}\rightarrow\mathbb{G}_{\mathrm{m},z}$ induced by the ring homomorphism $\mathbb{Q}[z,z^{-1}]\rightarrow\mathbb{Q}[t,t^{-1}]$, $z\mapsto t^{2}$. Let $T=(\mathcal{O}_{\mathbb{G}_{\mathrm{m},t}},{\mathrm{d}})$ be the trivial connection on $\mathbb{G}_{\mathrm{m},t}$.
Via the ring homomorphism $\mathbb{Q}[z,z^{-1}]\rightarrow\mathbb{Q}[t,t^{-1}]$, we view $\mathbb{Q}[t,t^{-1}]$ as the $\mathbb{Q}[z,z^{-1}]$-module. Then, from the decomposition of $\mathbb{Q}[z,z^{-1}]$-modules
\[
\mathbb{Q}[t,t^{-1}] = \mathbb{Q}[z,z^{-1}]\oplus t\cdot \mathbb{Q}[z,z^{-1}],
\]
the pushforward connection $\left[2\right]_{+}T$ decomposes into
the direct sum
\[
(\mathcal{O}_{\mathbb{G}_{\mathrm{m},z}},{\mathrm{d}})\oplus \left(t\cdot \mathcal{O}_{\mathbb{G}_{\mathrm{m},z}},{\mathrm{d}}\right).
\]
The second component $\left(t\cdot \mathcal{O}_{\mathbb{G}_{\mathrm{m},z}},{\mathrm{d}}\right)$ is isomorphic to $(\mathcal{O}_{\mathbb{G}_{\mathrm{m},z}},\mathrm{d}+\frac{1}{2}\frac{\mathrm{d}z}{z})$ via the following diagram
\begin{equation}\label{iso_of_connection_sqrt(z)}
\begin{tikzcd}
{\mathcal{O}_{\mathbb{G}_{\mathrm{m}},z}} \arrow[rr, "\mathrm{d}+\frac{\mathrm{d}t}{t}=\mathrm{d}+\frac{1}{2}\frac{\mathrm{d}z}{z}"] \arrow[d, "t"', "\wr"] && {\mathcal{O}_{\mathbb{G}_{\mathrm{m}},z}\otimes \Omega_{\mathbb{G}_{\mathrm{m}},z}^{1}} \arrow[d, "t", "\wr"'] \\
{t\cdot \mathcal{O}_{\mathbb{G}_{\mathrm{m}},z}} \arrow[rr, "\mathrm{d}"]                                       && {t\cdot \mathcal{O}_{\mathbb{G}_{\mathrm{m}},z}\otimes \Omega_{\mathbb{G}_{\mathrm{m}},z}^{1}}               
\end{tikzcd}
\end{equation}

The dual connection of $(\mathcal{O}_{\mathbb{G}_{\mathrm{m},z}},\mathrm{d}+\frac{1}{2}\frac{\mathrm{d}z}{z})$ is given by $(\mathcal{O}_{\mathbb{G}_{\mathrm{m},z}},\mathrm{d}-\frac{1}{2}\frac{\mathrm{d}z}{z})$,
and the two are isomorphic via multiplication by $z$, that is, the following diagram commutes.
\[
\begin{tikzcd}
{\mathcal{O}_{\mathbb{G}_{\mathrm{m}},z}} \arrow[r, " \mathrm{d}+\frac{1}{2}\frac{\mathrm{d}z}{z}"] \arrow[d, "z"', "\wr"] & {\mathcal{O}_{\mathbb{G}_{\mathrm{m}},z}\otimes \Omega_{\mathbb{G}_{\mathrm{m}},z}^{1}} \arrow[d, "z", "\wr"'] \\
{\mathcal{O}_{\mathbb{G}_{\mathrm{m}},z}} \arrow[r, "\mathrm{d}-\frac{1}{2}\frac{\mathrm{d}z}{z}"]                                       & {\mathcal{O}_{\mathbb{G}_{\mathrm{m}},z}\otimes \Omega_{\mathbb{G}_{\mathrm{m}},z}^{1}}               
\end{tikzcd}
\]
This induces a perfect algebraic horizontal pairing $\langle\ ,\, \rangle_{\mathrm{alg}}$ on $(\mathcal{O}_{\mathbb{G}_{\mathrm{m},z}},\mathrm{d}+\frac{1}{2}\frac{\mathrm{d}z}{z})$ given by
\[
\langle 1,1\rangle_{\mathrm{alg}} = z.
\]

On the other hand,
the rational structure of the local system of horizontal sections of $(\mathcal{O}_{\mathbb{G}_{\mathrm{m},t}},\mathrm{d})$ is generated by $1$. Under the isomorphism \eqref{iso_of_connection_sqrt(z)}, the rational structure of local system of horizontal sections of $(\mathcal{O}_{\mathbb{G}_{\mathrm{m},z}},\mathrm{d}+\frac{1}{2}\frac{\mathrm{d}z}{z})$ is generated by
$\frac{1}{t}=\frac{1}{\sqrt{z}}$. Its dual connection $(\mathcal{O}_{\mathbb{G}_{\mathrm{m},z}},\mathrm{d}-\frac{1}{2}\frac{\mathrm{d}z}{z})$ has local system of horizontal sections generated by $\sqrt{z}$. This induces a rational topological pairing $\langle\ ,\, \rangle_{\mathrm{top}}$ on $(\mathcal{O}_{\mathbb{G}_{\mathrm{m},z}},\mathrm{d}+\frac{1}{2}\frac{\mathrm{d}z}{z})^{\nabla}$
\[
\left\langle\frac{1}{\sqrt{z}},\frac{1}{\sqrt{z}}\right\rangle_{\mathrm{top}} = 1.
\]

\subsection{Algebraic and topological pairings on $\sqrt{z}\Sym^{k}{\mathrm{Kl}}_{2}$}\label{algebraic_topological_pairing}
The $k$-th symmetric product of $\mathrm{Kl}_{2}$, $\Sym^{k}\mathrm{Kl}_{2}$, is a rank $k+1$ free sheaf over $\mathcal{O}_{\mathbb{G}_{\mathrm{m}}}$ with basis of sections 
\[
v_{0}^{a}v_{1}^{k-a} = \frac{1}{|\mathfrak{S}_{k}|}\sum_{\sigma\in\mathfrak{S}_{k}}\sigma\left(v_{0}^{\otimes a}\otimes v_{1}^{\otimes k-a}\right)\qquad a=0,1,\ldots,k,
\]
where $\mathfrak{S}_{k}$ is the symmetric group on $k$ elements. It is endowed with the induced connection from $\left(\mathrm{Kl}_{2},\nabla\right)$. After twisting with the connection $\left(\mathcal{O}_{\mathbb{G}_{\mathrm{m}}},\mathrm{d}+\frac{1}{2}\frac{{\mathrm{d}z}}{z}\right)$, we define
\[
\sqrt{z}\Sym^{k}{\mathrm{Kl}}_{2}=\left(\mathcal{O}_{\mathbb{G}_{\mathrm{m}}},{\mathrm{d}}+\frac{1}{2}\frac{{\mathrm{d}}z}{z}\right)\otimes \Sym^{k}{\mathrm{Kl}}_{2}.
\]
The induced connection $\nabla$ on $\sqrt{z}\Sym^{k}{\mathrm{Kl}}_{2}$ is given by
\begin{align}\label{connection_on_twisted_symmetric_Kl2}
\nabla v_{0}^{a}v_{1}^{k-a}&=(k-a)v_{0}^{a+1}v_{1}^{k-a-1}{\mathrm{d}}z+\dfrac{a}{z}v_{0}^{a-1}v_{1}^{k-a+1}{\mathrm{d}}z+\dfrac{1}{2z}v_{0}^{a}v_{1}^{k-a}{\mathrm{d}}z.
\end{align}
Note that $\sqrt{z}\Sym^{k}{\mathrm{Kl}}_{2}$ is the same sheaf as $\Sym^{k}{\mathrm{Kl}}_{2}$ but endowed with a different connection.

Via the self-duality on ${\mathrm{Kl}}_{2}$ and on $\left(\mathcal{O}_{\mathbb{G}_{\mathrm{m}}},\mathrm{d}+\frac{1}{2}\frac{\mathrm{d}z}{z}\right)$, we have the perfect algebraic pairing $\left\langle\ ,\ \right\rangle_{\mathrm{alg}}$ on $\sqrt{z}\Sym^{k}{\mathrm{Kl}}_{2}$:
\[
\begin{tikzcd}
\sqrt{z}\Sym^{k}{\mathrm{Kl}}_{2}\times \sqrt{z}\Sym^{k}{\mathrm{Kl}}_{2} \arrow[rr, "{\left\langle\ ,\ \right\rangle_{\mathrm{alg}}}"] &  & {\left(\mathcal{O}_{\mathbb{G}_{\mathrm{m}}},{\mathrm{d}}\right)}
\end{tikzcd}
\]
given by
\[
\left\langle v_{0}^{k-a}v_{1}^{a},v_{0}^{k-b}v_{1}^{b}\right\rangle_{\mathrm{alg}}=z\delta_{k,a+b}(-1)^{a}\dfrac{a!b!}{k!}=(2\pi\sqrt{-1})^{k}\left\langle e_{0}^{k-a}e_{1}^{a},e_{0}^{k-b}e_{1}^{b}\right\rangle_{\mathrm{alg}}.
\]
Indeed, 
\begin{align*}
\left\langle v_{0}^{k-a}v_{1}^{a},v_{0}^{k-b}v_{1}^{b}\right\rangle_{\mathrm{alg}} & = \left\langle \frac{1}{|\mathfrak{S}_{k}|}\sum_{\sigma\in\mathfrak{S}_{k}}\sigma\left(v_{0}^{\otimes k-a}\otimes v_{1}^{\otimes a}\right),\frac{1}{|\mathfrak{S}_{k}|}\sum_{\tau\in\mathfrak{S}_{k}}\tau\left(v_{0}^{\otimes k-b}\otimes v_{1}^{\otimes b}\right)\right\rangle_{\mathrm{alg}}\\
&=\frac{1}{(k!)^{2}}\sum_{\sigma,\tau\in\mathfrak{S}_{k}}\langle \sigma(v_{0}^{\otimes k-a}\otimes v_{1}^{\otimes a}),\tau(v_{0}^{\otimes k-b}\otimes v_{1}^{\otimes b})\rangle_{\mathrm{alg},\mathrm{Kl}_{2}}\cdot \langle 1,1\rangle_{\mathrm{alg},(\mathcal{O}_{\mathbb{G}_{\mathrm{m}}},\mathrm{d}+\frac{1}{2}\frac{\mathrm{d}z}{z})}\\
& = \frac{1}{(k!)^{2}}(\delta_{k-a,b}k!a!b!(-1)^{a})\cdot z = z\delta_{k,a+b}(-1)^{a}\frac{a!b!}{k!}.
\end{align*}
By the definition of $e_{0},e_{1}$ in \eqref{horizontal_sections_Kl2} and Wronskian relation $A_{0}B_{1}-A_{1}B_{0}=2$, similar computation shows the formula for the algebraic pairing $\langle e_{0}^{k-a}e_{1}^{a},e_{0}^{k-b}e_{1}^{b}\rangle_{\mathrm{alg}}$.

The local system $(\sqrt{z}\Sym^{k}\mathrm{Kl}_{2})^{\nabla}$ is a $\mathbb{Q}$-vector space generated by the horizontal sections
\begin{equation}\label{eq:horizontal_basis}
\dfrac{1}{\sqrt{z}}e_{0}^{a}e_{1}^{k-a} = \dfrac{1}{\sqrt{z}}\sum_{\sigma\in\mathfrak{S}_{k}}\sigma\big(e_{0}^{\otimes a}\otimes e_{1}^{\otimes k-a}\big),\quad a=0,1,\cdots,k,
\end{equation}
which are the products of the horizontal sections of the connections $\left(\mathcal{O}_{\mathbb{G}_{\mathrm{m}}},\mathrm{d}+\frac{1}{2}\frac{{\mathrm{d}}z}{z}\right)$ and $\Sym^{k}\mathrm{Kl}_{2}$. The topological pairing $\langle\ ,\, \rangle$ on $\mathrm{Kl}_{2}^{\nabla}$ induces a topological pairing on $(\Sym^{k}\mathrm{Kl}_{2})^{\nabla}$ and thus on $(\sqrt{z}\Sym^{k}\mathrm{Kl}_{2})^{\nabla}$:
\[
\begin{tikzcd}
(\sqrt{z}\Sym^{k}{\mathrm{Kl}}_{2})^{\nabla}\times (\sqrt{z}\Sym^{k}{\mathrm{Kl}}_{2})^{\nabla} \arrow[rr, "{\left\langle\ ,\ \right\rangle_{\mathrm{top}}}"] &  & \mathbb{Q},
\end{tikzcd}
\]
where $\mathbb{Q}$ on the right hand side is the constant sheaf associated with the field $\mathbb{Q}$ on $\mathbb{G}_{\mathrm{m}}$.
This pairing reads
\[
\left<\frac{1}{\sqrt{z}}e_{0}^{a}e_{1}^{k-a},\frac{1}{\sqrt{z}}e_{0}^{b}e_{1}^{k-b}\right>_{\mathrm{top}}=\delta_{k,a+b}(-1)^{k-a}\dfrac{a!b!}{k!}
\]
by the similar computation as above.

\section{The de Rham cohomology}\label{section_de_Rham_side}
In this section, we study the de Rham cohomology of the twisted Kloosterman connection $H^{1}_{\mathrm{dR}}(\mathbb{G}_{\mathrm{m}},\sqrt{z}\Sym^{k}\mathrm{Kl}_{2})$ and its dual, compact support de Rham cohomology $H^{1}_{\mathrm{dR,c}}(\mathbb{G}_{\mathrm{m}},\sqrt{z}\Sym^{k}\mathrm{Kl}_{2})$. We will write down certain elements in these cohomologies explicitly and compute the Poincar\'{e} pairing between these elements. Finally, we conclude the bases of these two cohomologies in the end of this section.

\subsection{Dimension of $H^{1}_{\mathrm{dR}}\big(\mathbb{G}_{\mathrm{m}},\sqrt{z}\Sym^{k}{\mathrm{Kl}}_{2}\big)$}

\begin{proposition}\label{dimthm}
For the connection $\sqrt{z}\Sym^{k}{\mathrm{Kl}}_{2}$ on $\mathbb{G}_{\mathrm{m}}$, we have
\[
\dim H^{1}_{\mathrm{dR}}\Big(\mathbb{G}_{\mathrm{m}},\sqrt{z}\Sym^{k}{\mathrm{Kl}}_{2}\Big)=\left\lfloor\dfrac{k+1}{2}\right\rfloor.
\]
\end{proposition}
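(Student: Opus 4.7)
The plan is to apply the Euler--Poincaré formula for algebraic connections on $\mathbb{G}_{\mathrm{m}}$: since $\chi(\mathbb{G}_{\mathrm{m}})=0$, for any holonomic connection $\mathcal{M}$ on $\mathbb{G}_{\mathrm{m}}$ one has
\[
\chi_{\mathrm{dR}}(\mathbb{G}_{\mathrm{m}},\mathcal{M})
= -\mathrm{irr}_{0}(\mathcal{M}) - \mathrm{irr}_{\infty}(\mathcal{M}).
\]
Assuming in addition that $H^{0}_{\mathrm{dR}}(\mathbb{G}_{\mathrm{m}},\mathcal{M})=0$, proving the proposition therefore amounts to showing that the total irregularity of $\sqrt{z}\Sym^{k}{\mathrm{Kl}}_{2}$ equals $\lfloor(k+1)/2\rfloor$.

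At $z=0$ the operator $(z\partial_{z})^{2}-z$ defining $\mathrm{Kl}_{2}$ is regular singular, so its symmetric powers are regular singular as well; the twist by the regular singular connection $(\mathcal{O}_{\mathbb{G}_{\mathrm{m}}},\mathrm{d}+\tfrac{1}{2}\tfrac{\mathrm{d}z}{z})$ does not affect this, so $\mathrm{irr}_{0}=0$. At $z=\infty$, the Bessel asymptotics of $A_{0}(z)=-2I_{0}(2\sqrt{z})$ and $B_{0}(z)=2K_{0}(2\sqrt{z})$ give the two exponential factors $e^{\pm 2\sqrt{z}}$ of $\mathrm{Kl}_{2}$, each of slope $1/2$. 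Forming the $k$-th symmetric power yields $k+1$ exponentials $e^{(4a-2k)\sqrt{z}}$ for $a=0,1,\dots,k$, each of slope $1/2$ unless $a=k/2$ (possible only when $k$ is even), in which case the exponent vanishes and the slope is $0$. Since the regular singular twist is slope-preserving at $\infty$, summing these slopes gives
\[
\mathrm{irr}_{\infty}\bigl(\sqrt{z}\Sym^{k}{\mathrm{Kl}}_{2}\bigr)
= \tfrac{1}{2}\cdot\#\{a\in\{0,\dots,k\}:2a\ne k\}
= \left\lfloor\tfrac{k+1}{2}\right\rfloor.
\]

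Next I would verify $H^{0}_{\mathrm{dR}}=0$: a nonzero global algebraic horizontal section would, after analytic continuation, be a Laurent polynomial in $z$ that happens to lie in the $\mathbb{Q}$-span of the transcendental horizontal basis \eqref{eq:horizontal_basis}. The algebraic independence of $\sqrt{z}$, $I_{0}(2\sqrt{z})$, and $K_{0}(2\sqrt{z})$ over $\mathbb{Q}(z)$ rules this out. Equivalently, a nonzero irregularity at $\infty$ (or, when $k=0$, the nontrivial monodromy $-1$ around $0$) precludes the extension of any flat section to an algebraic global section on $\mathbb{G}_{\mathrm{m}}$. Combining with steps above yields $\dim H^{1}_{\mathrm{dR}} = \mathrm{irr}_{\infty} = \lfloor(k+1)/2\rfloor$.

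The principal technical obstacle is making the slope count at $\infty$ rigorous, especially in the even $k$ case where one summand of $\Sym^{k}\mathrm{Kl}_{2}$ degenerates to a regular singular piece and its contribution to the irregularity must be excluded. Rather than arguing abstractly via the formal Levelt--Turrittin decomposition on a ramified cover, the paper carries this out by choosing a cyclic vector, producing a scalar differential operator of order $k+1$ that is a twisted symmetric power of the modified Bessel operator, and reading off its leading behaviour at $w=1/z=0$; this is precisely the content of appendix \ref{section_symmetric_power_of_differential_operator}. Translated into the Newton polygon of that operator, the counting above becomes explicit and the dimension claim follows.
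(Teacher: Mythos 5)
Your proposal is correct, and it reaches the dimension by a genuinely different route from the paper. You invoke the Euler--Poincar\'e formula $\chi = -\mathrm{irr}_{0}-\mathrm{irr}_{\infty}$ and compute the irregularity at $\infty$ from the slope decomposition: the two exponentials $e^{\pm 2\sqrt{z}}$ of $\mathrm{Kl}_{2}$ give $k+1$ exponentials $e^{2(2a-k)\sqrt{z}}$ in the symmetric power, each of slope $\tfrac12$ except the degenerate one at $a=k/2$, and the regular twist preserves slopes; the count $\tfrac12\#\{a: 2a\neq k\}=\lfloor(k+1)/2\rfloor$ is right in both parities. The paper instead applies Katz's Lemma 2.9.13, which expresses $\chi(\mathbb{G}_{\mathrm{m}},\mathcal{D}/\mathcal{D}L)$ as $-(a_{L}-b_{L})$ for $L=\sum_i z^iP_i(\theta_z)$, and then computes $a_L$ and $b_L$ explicitly for the (twisted) symmetric power of the Bessel operator via the recurrence and leading-term analysis of Appendix \ref{section_symmetric_power_of_differential_operator}; the conjugation $\tfrac{1}{\sqrt z}\theta_z\sqrt z=\theta_z+\tfrac12$ shows the twist leaves $a_L,b_L$ unchanged. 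The two arguments are of course two faces of the same Euler--Poincar\'e principle ($a_L$ and $b_L$ encode the Newton polygons at $\infty$ and $0$), but yours is shorter and more conceptual provided one takes the Levelt--Turrittin decomposition of $\mathrm{Kl}_{2}$ at $\infty$ for granted, while the paper's explicit operator combinatorics is self-contained and is reused later for the Vanhove operator and the determinant formulae, so it earns its keep. Two small points: you should note that $H^{2}_{\mathrm{dR}}$ vanishes (automatic on an affine curve, as the paper records via Artin vanishing) so that $\chi=h^{0}-h^{1}$; and for $H^{0}=0$ the cleanest statement is the paper's, namely that no nonzero $\mathbb{C}$-linear combination of the solutions $\tfrac{1}{\sqrt z}A_{0}^{i}B_{0}^{k-i}$ is algebraic (exponential growth when $i\neq k-i$, a logarithm at $0$ or the half-integral power of $z$ otherwise) --- your blanket claim that nonzero irregularity at $\infty$ precludes global flat sections is not true for arbitrary connections, though it is harmless here.
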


\begin{proof}
In \cite[Lemma 2.9.13]{MR1081536}, we have the following formula.
\begin{lemma}\label{dimension_lemma}
On $\mathbb{G}_{\mathrm{m}}$ with parameter $z$, let $\mathcal{D}=\mathcal{O}_{\mathbb{G}_{\mathrm{m}}}[\partial_{z}]$ be the ring of all differential operators on $\mathbb{G}_{\mathrm{m}}$. Write $\theta_{z}=z\partial_{z}$. For a non-zero element $L\in\mathcal{D}$, write $L$ into a finite sum of the form $\sum_{i} z^{i}P_{i}(\theta_{z})$, where $P_{i}(x)\in\mathbb{Q}[x]$. Define integers $a,b$ by
\[
a_{L}:=\max\left\{i\mid P_{i}\neq 0\right\};\ \ \ \ b_{L}:=\min\left\{i\mid P_{i}\neq 0\right\}.
\]
Then the Euler characteristic of the $\mathcal{D}$-module $\mathcal{D}/\mathcal{D}L$ is given by $\chi\left(\mathbb{G}_{\mathrm{m}},\mathcal{D}/\mathcal{D}L\right)=-\left(a_{L}-b_{L}\right)$.
\end{lemma}

In this proof, we will follow the notations as in this lemma. Now, the differential operator on $\mathbb{G}_{\mathrm{m}}$ associated with the connection ${\mathrm{Kl}}_{2}$ is given by $\theta_{z}^{2}-z$ which annihilates $v_{0}$ and has fundamental solutions $A_{0}(z)$ and $B_{0}(z)$. Then, the differential operator for $\Sym^{k}{\mathrm{Kl}}_{2}$ is given by the $k$-th symmetric power of $\theta_{z}^{2}-z$, i.e., the differential operator annihilates $v_{0}^{k}$ and has fundamental solutions $A_{0}^{i}B_{0}^{k-i}$ for $i=0,\ldots,k$. Denote this operator by $\widetilde{L}_{k+1}\in\mathcal{D}$. For $\sqrt{z}\Sym^{k}{\mathrm{Kl}}_{2}$, the corresponding differential operator reads $\frac{1}{\sqrt{z}}\widetilde{L}_{k+1}\sqrt{z}=:L$ since the solution is now given by $\frac{1}{\sqrt{z}}A_{0}^{i}B_{0}^{k-i}$ for $i=0,1,\ldots,k$.

Recall in subsection \ref{section_recall_dual_and_pairing_Kl2}, $L_{2}=(t\partial_{t})^{2}-t^{2}$ is the differential operator annihilates $I_{0}(t)$ and $K_{0}(t)$. Write $L_{k+1}$ to be the $k$-th symmetric power of $(t\partial_{t})^{2}-t^{2}$. That is, $L_{k+1}$ annihilates $I_{0}^{a}(t)K_{0}^{k-a}(t)$ for $a=0,\ldots,k$. As discussed in subsection \ref{section_recall_dual_and_pairing_Kl2}, the change of variable $z=\frac{t^{2}}{4}$ sends $L_{k+1}$ to $\widetilde{L}_{k+1}$. By Proposition \ref{dimennsion_parameter}, we have that $a_{L_{k+1}}=2\lfloor\frac{k+1}{2}\rfloor$, $b_{L_{k+1}}=0$. Therefore, by the degree $2$ change of variable $z=\frac{t^{2}}{4}$, we conclude $a_{\widetilde{L}_{k+1}}=\lfloor\frac{k+1}{2}\rfloor$, $b_{\widetilde{L}_{k+1}}=0$.

Using the fact that $\frac{1}{\sqrt{z}}\theta_{z}\sqrt{z}=\theta_{z}+\frac{1}{2}$, we have $\frac{1}{\sqrt{z}}\widetilde{L}_{k+1}\sqrt{z}=\sum z^{i}P_{i}\left(\theta_{z}+\frac{1}{2}\right)$ whenever $\widetilde{L}_{k+1}=\sum z^{i}P_{i}(\theta_{z})$. This shows $a_{L}=a_{\widetilde{L}_{k+1}}$ and $b_{L} = b_{\widetilde{L}_{k+1}}$. Therefore, by Lemma \ref{dimension_lemma}, we have 

\[
\chi\left(\mathbb{G}_{\mathrm{m}},\sqrt{z}\Sym^{k}{\mathrm{Kl}}_{2}\right)=\chi\left(\mathbb{G}_{\mathrm{m}},\mathcal{D}/\mathcal{D}L\right) =-\left\lfloor\frac{k+1}{2}\right\rfloor.
\]

Similar to the behavior of $I_{0}$ and $K_{0}$ \cite[\S 10.30(i)]{NIST:DLMF}, $A_{0}$ is holomorphic at $0$ and has exponential growth near infinity, and $B_{0}$ has a log pole at $0$. These imply all of the solutions $\frac{1}{\sqrt{z}}A_{0}^{i}B_{0}^{k-i}$ are not algebraic solutions, and thus $H^{0}_{\mathrm{dR}}\big(\mathbb{G}_{\mathrm{m}},\sqrt{z}\Sym^{k}{\mathrm{Kl}}_{2}\big)=0$. Hence, combining the fact that $H^{2}_{\mathrm{dR}}\big(\mathbb{G}_{\mathrm{m}},\Sym^{k}{\mathrm{Kl}}_{2}\big)=0$ by Artin vanishing theorem, we conclude that $H^{1}_{\mathrm{dR}}\big(\mathbb{G}_{\mathrm{m}},\sqrt{z}\Sym^{k}{\mathrm{Kl}}_{2}\big)$ has dimension $\left\lfloor\frac{k+1}{2}\right\rfloor$.
\end{proof}

\begin{remark}
In \cite[Lemma 2.9.13]{MR1081536}, Katz provides the proof of lemma \ref{dimension_lemma} only in the case $\mathbb{G}_{\mathrm{m},\mathbb{C}}$ which is over $\mathbb{C}$. Yet, the same proof still works in our situation $\mathbb{G}_{\mathrm{m},\mathbb{Q}}$ which is over $\mathbb{Q}$.
\end{remark}

\subsection{Compactly supported de Rham cohomology}
Write $k'=\lfloor\frac{k-1}{2}\rfloor$. Consider the $k^{\prime}+1$ elements $\left\{v_{0}^{k}z^{i}\frac{{\mathrm{d}z}}{z}\right\}_{i=0}^{k^{\prime}}$ in $H^{1}_{\mathrm{dR}}\big(\mathbb{G}_{\mathrm{m}},\sqrt{z}\Sym^{k}{\mathrm{Kl}}_{2}\big)$. We will prove these elements form a $\mathbb{Q}$-basis. (See Corollary \ref{de_rham_basis}.) The Poincar\'{e} dual of the de Rham cohomology is the de Rham cohomology with compact support.
An element in the de Rham cohomology with compact support $H^{1}_{\mathrm{dR,c}}\big(\mathbb{G}_{\mathrm{m}},\sqrt{z}\Sym^{k}{\mathrm{Kl}}_{2}\big)$ is represented by a triple $(\xi,\eta,\omega)$, where $\omega\in H^{1}_{\mathrm{dR}}\big(\mathbb{G}_{\mathrm{m}},\sqrt{z}\Sym^{k}{\mathrm{Kl}}_{2}\big)$ and $\xi,\eta$ are formal solutions to $\nabla\xi = \nabla\eta =\omega$ at $0$ and $\infty$ respectively (See \cite[Corollary 3.5]{MR4601771}). The solutions are provided by the following lemma. 

\begin{lemma}\label{lem_formal_sol}
Suppose that $k\equiv 0,1,3\bmod{4}$. For $0\leq i\leq k^{\prime}$, there exists $(\xi_{i},\eta_{i})\in\big(\sqrt{z}\Sym^{k}{\mathrm{Kl}}_{2}\big)_{\widehat{0}}\oplus \big(\sqrt{z}\Sym^{k}{\mathrm{Kl}}_{2}\big)_{\widehat{\infty}}$ such that $\nabla\xi_{i}=\nabla\eta_{i}=v_{0}^{k}z^{i}\frac{\mathrm{d}z}{z}$. 

On the other hand, let $k\equiv 2\bmod{4}$, say $k=4r+2$. For $0\leq i\leq k^{\prime}$ with $i\neq r$, there exists $(\xi_{i},\eta_{i})\in\big(\sqrt{z}\Sym^{k}{\mathrm{Kl}}_{2}\big)_{\widehat{0}}\oplus \big(\sqrt{z}\Sym^{k}{\mathrm{Kl}}_{2}\big)_{\widehat{\infty}}$ such that
\[
\nabla\xi_{i}=\nabla\eta_{i}=v_{0}^{k}z^{i}\frac{{\mathrm{d}}z}{z}-\gamma_{k,i-r}v_{0}^{k}z^{r}\frac{{\mathrm{d}}z}{z},
\]
where $\gamma_{k,n}\in\mathbb{Q}$ are the coefficients in the asymptotic expansion of $(-A_{0}\left(z\right)B_{0}\left(z\right))^{k/2}$
given by \eqref{expansion_of_(-AB)^half} below.
\end{lemma}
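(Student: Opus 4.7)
The plan is to interpret the existence of $(\xi_i,\eta_i)$ as the vanishing of local $H^{1}$ classes at $0$ and at $\infty$, and then analyze these two localisations separately using the connection matrix (\ref{connection_on_twisted_symmetric_Kl2}) together with the asymptotics of $A_{0}$ and $B_{0}$ recalled in Section~\ref{section_recall_dual_and_pairing_Kl2}.

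At $0$, the connection is regular singular. Reading the residue matrix off (\ref{connection_on_twisted_symmetric_Kl2}), one finds that in the basis $\{v_{0}^{a}v_{1}^{k-a}\}$ it is upper triangular with every diagonal entry equal to $\tfrac{1}{2}$, so its spectrum is $\{\tfrac{1}{2}\}$. Since $\tfrac{1}{2}\notin\mathbb{Z}$, the standard theory of regular singular connections gives that $\nabla$ is a bijection on the formal module $(\sqrt{z}\Sym^{k}\mathrm{Kl}_{2})_{\widehat{0}}$. Therefore every class $\omega$ admits a formal primitive $\xi_{i}$ at $0$, in all congruence classes of $k$ modulo $4$ and with no correction needed.

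At $\infty$ the connection is irregular, and I would pass to the double cover $t=\sqrt{z}$ on which it decomposes formally by Hukuhara--Levelt--Turrittin. The asymptotic $A_{0}^{p}B_{0}^{k-p}\sim c_{p}\,z^{-k/4}e^{(4p-2k)\sqrt{z}}$ identifies the exponents as $\alpha_{p}=(4p-2k)\sqrt{z}$ for $p=0,\dots,k$. Summands with $\alpha_{p}\neq 0$ are purely irregular, hence $\nabla$ is surjective on formal sections there and contributes nothing to the local cokernel. When $k$ is odd no $\alpha_{p}$ vanishes, so $\eta_{i}$ exists with no correction. When $k$ is even the regular summand occurs at $p=k/2$, with formal horizontal section proportional to $\tfrac{1}{\sqrt{z}}(-A_{0}B_{0})^{k/2}\sim z^{-k/4-1/2}$, giving residue eigenvalue $\lambda=(k+2)/4$ at $\infty$. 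If $k\equiv 0\pmod 4$ then $\lambda=r+\tfrac{1}{2}$ is a half-integer and again $\nabla$ is bijective on this summand, so $\eta_{i}$ exists directly.

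The only remaining case is $k=4r+2$, where $\lambda=r+1\in\mathbb{Z}_{>0}$ and the regular summand at $\infty$ carries a one-dimensional cokernel. The obstruction to lifting $v_{0}^{k}z^{i}\tfrac{\mathrm{d}z}{z}$ is its image under projection onto that summand, which reduces to computing the coefficient of $z^{-1}\mathrm{d}z$ in the formal product $\tfrac{1}{\sqrt{z}}(-A_{0}B_{0})^{k/2}z^{i-1}\mathrm{d}z$; this coefficient is read off directly from the asymptotic expansion (\ref{expansion_of_(-AB)^half}) and equals $\gamma_{k,i-r}$. Since the same projection of $v_{0}^{k}z^{r}\tfrac{\mathrm{d}z}{z}$ gives $\gamma_{k,0}\neq 0$ (normalised to $1$ by the leading term of the expansion), the corrected form $v_{0}^{k}z^{i}\tfrac{\mathrm{d}z}{z}-\gamma_{k,i-r}v_{0}^{k}z^{r}\tfrac{\mathrm{d}z}{z}$ has vanishing projection, and therefore admits a formal primitive $\eta_{i}$ at $\infty$. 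The main technical obstacle is this explicit matching of the local obstruction against $\gamma_{k,i-r}$: one must carefully track how the asymptotic expansion of $(-A_{0}B_{0})^{k/2}$ interacts with $\nabla$ restricted to the line spanned by $v_{0}^{k}$ in the regular summand, and it is precisely this analysis that accounts for the asymmetric behaviour of the lemma between $k\equiv 0,1,3$ and $k\equiv 2$ modulo $4$.
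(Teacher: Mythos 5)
Your strategy is essentially the paper's, recast in more structural language: at $0$ the paper solves the same recursion whose invertibility is exactly your observation that the residue has the single eigenvalue $\tfrac12\notin\mathbb{Z}$, and at $\infty$ the paper's expansion of $v_0^k z^i\frac{\mathrm{d}z}{z}$ in the horizontal frame $\frac{1}{\sqrt z}e_0^{k-a}\overline e_1^{a}$ is precisely the Hukuhara--Levelt--Turrittin decomposition you invoke, with the obstruction on the regular summand computed as $\Res_w\bigl((-A_0B_0)^{k/2}w^{-i-3/2}\bigr)=\gamma_{k,i-r}$, exactly as you say. (Minor slip: for $k=4r+4$ the exponent of the regular summand at $\infty$ is $(k+2)/4=r+\tfrac32$, not $r+\tfrac12$; the conclusion that it is a half-integer is unaffected.)

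There is, however, one substantive step you omit. The lemma asserts $\eta_i\in\big(\sqrt{z}\Sym^{k}{\mathrm{Kl}}_{2}\big)_{\widehat{\infty}}$, i.e.\ $\eta_i\in\bigoplus_{a}\mathbb{Q}\llbracket w\rrbracket v_{0}^{a}v_{1}^{k-a}$ with $w=1/z$. Your construction produces $\eta_i$ as a sum of terms $\eta_{i,a}\frac{1}{\sqrt z}e_0^{k-a}\overline e_1^{a}$ living a priori in $\mathbb{Q}\llbracket w^{1/4}, e^{-c/\sqrt w}\rrbracket$-combinations on a ramified cover, and the horizontal frame itself carries factors of $\pi$ and $\sqrt{-1}$. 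One must still verify that after rewriting in the algebraic basis the exponentials cancel, only integral powers of $w$ survive, and the coefficients are rational. The paper devotes a nontrivial part of the proof to this: it expands $e_0^{k-a}\overline e_1^{a}$ back in the $v$-basis to kill the exponentials, and then checks invariance under the Galois action $\sigma\colon w^{1/4}\mapsto\sqrt{-1}\,w^{1/4}$ (using the monodromy $\sigma(A_j,B_j)=(\tfrac{1}{\pi\sqrt{-1}}B_j,-\pi\sqrt{-1}A_j)$, which swaps $\eta_{i,a}\frac{1}{\sqrt z}e_0^{k-a}\overline e_1^{a}$ with $\eta_{i,k-a}\frac{1}{\sqrt z}e_0^{a}\overline e_1^{k-a}$) to conclude descent to $\mathbb{Q}\llbracket w\rrbracket$. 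This can be repaired in your framework by an averaging argument over the Galois group of the cover, but as written the proposal never addresses where $\eta_i$ actually lives, and that is the main technical content of the lemma beyond the residue computation.
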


\begin{proof}
Near $0$, we want to find
\[ \xi_{i}=\sum_{a=0}^{k}\xi_{i,a}(z)v_{0}^{a}v_{1}^{k-a}\in\bigoplus_{a=0}^{k}\mathbb{Q}\llbracket z\rrbracket v_{0}^{a}v_{1}^{k-a} \]
such that $\nabla\xi_{i}=v_{0}^{k}z^{i}\frac{{\mathrm{d}}z}{z}$.
Using the connection formula \eqref{connection_on_twisted_symmetric_Kl2} on $\sqrt{z}\Sym^{k}{\mathrm{Kl}}_{2}$, we need to solve:
\begin{align*}
\frac{{\mathrm{d}}}{\mathrm{d}z}\xi_{i,k}(z)+(k-1)\xi_{i,k-1}(z)+\frac{1}{2z}\xi_{i,k}(z)&=z^{i-1},\\
\frac{{\mathrm{d}}}{\mathrm{d}z}\xi_{i,a}(z)+(k-a+1)\xi_{i,a-1}(z)+\frac{a+1}{z}\xi_{i,a+1}(z)+\frac{1}{2z}\xi_{i,a}(z)&=0 {\text{ for }}a=1,2,\cdots,k-1,\\
\frac{{\mathrm{d}}}{\mathrm{d}z}\xi_{i,0}(z)+\frac{1}{z}\xi_{i,1}(z)+\frac{1}{2z}\xi_{i,0}(z)&=0.
\end{align*}

Write $\xi_{i,a}=\sum_{n=0}^{\infty}\xi_{i,a,n}z^{n}$. We solve $\xi_{i,a,n}$ recursively on $n$. Suppose that we have solved $\xi_{i,a,j}$ for $j<n$. Compare the coefficient of $z^{n-1}$ of the above system of equations and get
\[
\left(
\begin{array}{ccccc}
&&& &n+\frac{1}{2}\\ 
&&&n+\frac{1}{2}&k\\
&&n+\frac{1}{2}&k-1&\\
&\iddots&\iddots&&\\
n+\frac{1}{2}&1&&&
\end{array}\right)
\left(\begin{array}{c}\xi_{i,0,n}\\ \xi_{i,1,n}\\\xi_{i,2,n}\\\vdots\\\xi_{i,k,n}\end{array}\right)=
{\text{lower order combinations}}.
\]
Since the first square matrix is invertible, $\xi_{i,a,n}$ is determined uniquely. Thus, we find $\xi_{i}\in\displaystyle\bigoplus_{a=0}^{k}\mathbb{Q}\llbracket z\rrbracket v_{0}^{a}v_{1}^{k-a}$ such that $\nabla\xi_{i}=v_{0}^{k}z^{i}\frac{{\mathrm{d}}z}{z}$. In $k\equiv 2\bmod{4}$ case, we only need to replace $\xi_{i}$ by $\xi_{i}-\gamma_{k,i-r}\xi_{r}$.

Next, we turn to investigate the formal solutions at $\infty$ using horizontal frames. We have the modified Bessel functions have the asymptotic expansions at $\frac{1}{t}$ \cite[\S 7.23]{MR1349110}
\begin{align}
I_0(t)
&\sim e^t\frac{1}{\sqrt{2\pi t}}
	\sum_{n=0}^\infty \frac{((2n-1)!!)^2}{2^{3n} n!} \frac{1}{t^n},
&& |\arg t\,| < \frac{1}{2}\pi \label{expansion_I_0_at_inf}\\
K_0(t)
&\sim e^{-t} \sqrt{\frac{\pi}{2t}}
	\sum_{n=0}^\infty (-1)^n\frac{((2n-1)!!)^2}{2^{3n} n!} \frac{1}{t^n},
&& |\arg t\,| < \frac{3}{2}\pi \label{expansion_K_0_at_inf}\\
I_0(t)K_0(t)
&\sim \frac{1}{2t}
	\sum_{n=0}^\infty \frac{((2n-1)!!)^3}{2^{3n}n!}\frac{1}{t^{2n}}.\label{expansion_I_0K_0_at_inf}
\end{align}
Here, the notation $n!!$ is the double factorial of a positive integer $n$ defined by 
\[
n!!=\prod_{k=0}^{\lceil\frac{n}{2}\rceil -1}(n-2k).
\]
Let $w=\frac{1}{z}$ be the local coordinate at $z=\infty$. For $k$ even, by the last asymptotic expansion, we have
\begin{align}\label{expansion_of_(-AB)^half}
(-A_{0}(z)B_{0}(z))^{k/2}\sim w^{1/4}\sum_{n=0}^{\infty}\gamma_{k,n}w^{n},
\end{align}
where $\gamma_{k,0}=1$ and $\gamma_{k,n}>0$ for all $n>0$. For convenience, we set $\gamma_{k,j}=0$ for all $j<0$.

Following the notation in \eqref{horizontal_sections_Kl2}, let us set $\overline{e}_{1} = \pi\sqrt{-1}e_{1}$.
Then $\frac{1}{\sqrt{z}}e_{0}^{a}\overline{e_{1}}^{k-a}$ are horizontal sections. Using the Wronskian relation $A_{0}B_{1}-A_{1}B_{0}=2$, we have $v_{0}=B_{0}e_{0}-A_{0}\overline{e}_{1}$. Then, we obtain
\begin{align*}
v_{0}^{k}z^{i}\frac{\mathrm{d}z}{z}&=-\sum_{a=0}^{k}\binom{k}{a}\frac{(-A_{0})^{a}B_{0}^{k-a}}{w^{i+3/2}}\frac{1}{\sqrt{z}}e_{0}^{k-a}\overline{e}_{1}^{a}\mathrm{d}w.
\end{align*}

To solve the formal solution $\eta_{i}$ of $\nabla\eta_{i} = v_{0}^{k}z^{i}\frac{\mathrm{d}z}{z}$, We first solve $\eta_{i,a}$ for each $i,a$ such that
\begin{align*}
\mathrm{d}\eta_{i,a}=\frac{(-A_{0})^{a}B_{0}^{k-a}}{w^{i+3/2}}.
\end{align*}
Then, $\eta_{i} = -\sum_{a=0}^{k}\binom{k}{a}\eta_{i,a}\frac{1}{\sqrt{z}}e_{0}^{k-a}\overline{e}_{1}^{a}$ is the desired solution. Moreover, since the function $\eta_{i,a}$ have $w^{k/4}$ and the exponential factor (see \eqref{expansion_eta_i_a} below), we need to justify
that $\eta_{i}$ lies in $\bigoplus_{a=0}^{k}\mathbb{Q}\llbracket w\rrbracket v_{0}^{a}v_{1}^{k-a}$ (not just in $\bigoplus_{a=0}^{k}\mathbb{Q}\llbracket w^{1/4},e^{-1/\sqrt{w}}\rrbracket v_{0}^{a}v_{1}^{k-a}$).

Near $\infty$, we have the expansion
\[
\frac{(-A_{0})^{a}B_{0}^{k-a}}{w^{i+3/2}}=
\begin{cases}
\sqrt{\pi}^{k-2a}e^{-2(k-2a)/\sqrt{w}}w^{k/4-i-3/2}\cdot F_{i,a}, & a\neq\frac{k}{2}\\
w^{k/4-i-3/2}\left(\sum_{n=0}^{\infty}\frac{\left(\left(2n-1\right)!!\right)^{3}}{2^{5n}n!}w^{n}\right)^{k/2}, & a=\frac{k}{2}
\end{cases}
\]
where $F_{i,a}\in 1+\sqrt{w}\mathbb{Q}\llbracket\sqrt{w}\rrbracket$.
When $a\neq \frac{k}{2}$, we can find an antiderivative $\eta_{i,a}$ of $\frac{(-A_{0})^{a}B_{0}^{k-a}}{w^{i+3/2}}$ with the expansion
\begin{align}\label{expansion_eta_i_a}
\eta_{i,a}=\frac{\sqrt{\pi}^{k-2a}}{k-2a}e^{-2(k-2a)/\sqrt{w}}w^{k/4-i}\cdot G_{i,a}
\end{align}
for some $G_{i,a}\in 1+\sqrt{w}\mathbb{Q}\llbracket\sqrt{w}\rrbracket$. We analyze $\eta_{i,a}\frac{1}{\sqrt{z}}e_{0}^{k-a}\overline{e}_{1}^{a}$.
Write $e_{0}^{k-a}\overline{e}_{1}^{a}$ back to the expression in basis $v_{0}^{b}v_{1}^{k-b}$:
\begin{align*}
e_{0}^{k-a}\overline{e}_{1}^{a}&=2^{-k}(A_{0}v_{1}-A_{1}v_{0})^{k-a}\cdot(B_{0}v_{1}-B_{1}v_{0})^{a}\\
&=2^{-k}e^{2(k-a)\sqrt{z}}\frac{1}{\sqrt{\pi}^{k-a}}(F_{1}v_{0}-F_{2}v_{1})^{k-a}\cdot e^{-2a\sqrt{z}}\sqrt{\pi}^{a}(G_{1}v_{1}-G_{2}v_{0})^{a}\\
&=2^{-k}e^{2(k-2a)\sqrt{z}}\sqrt{\pi}^{2a-k}(F_{1}v_{0}-F_{2}v_{1})^{k-a}(G_{1}v_{1}-G_{2}v_{0})^{a},
\end{align*}
where $F_{1},F_{2},G_{1},G_{2}\in z^{1/4}\mathbb{Q}\llbracket z^{-1/4}\rrbracket$. Thus,

\begin{align*}
\eta_{i,a}\frac{1}{\sqrt{z}}e_{0}^{k-a}\overline{e}_{1}^{a}&= \frac{2^{-k}}{k-2a}w^{k/4-i+1/2}G_{i,a}(F_{1}v_{0}-F_{2}v_{1})^{k-a}(G_{1}v_{1}-G_{2}v_{0})^{a},
\end{align*}
where $F_{1},F_{2},G_{1},G_{2}\in z^{1/4}\mathbb{Q}\llbracket z^{-1/4}\rrbracket$.
We conclude that the desired $\eta_{i}=-\sum_{a=0}^{k}\binom{k}{a}\eta_{i,a}\frac{1}{\sqrt{z}}e_{0}^{k-a}\overline{e}_{1}^{a}$ has no exponential factor as a combination of monomials $v_{0}^{k-b}v_{1}^{b}$, that is, $\eta_{i}$ lies in $\bigoplus_{a=0}^{k}\mathbb{Q}\llbracket w^{1/4}\rrbracket v_{0}^{a}v_{1}^{k-a}$.

Next, we will prove $\eta_{i}$ lies in $\bigoplus_{a=0}^{k}\mathbb{Q}\llbracket w\rrbracket v_{0}^{a}v_{1}^{k-a}$ by showing $\eta_{i}$ is invariant under the Galois group action. Let $\sigma :w^{1/4}\mapsto \sqrt{-1}w^{1/4}$ be the generator of the Galois group of the extension $\mathbb{C}(w^{1/4})$ of $\mathbb{C}(w)$. From the monodromy action \cite[10.34.5]{NIST:DLMF} of $I_{0},K_{0}$, the $\sigma$ action on $A_{i},B_{i}$ is given by
\begin{align*}
\sigma\left(A_{j},B_{j}\right)&=\left(\frac{1}{\pi\sqrt{-1}}B_{j},-\pi\sqrt{-1}A_{j}\right) {\text{ for }}j=0,1,
\end{align*}
and thus on $e_{0},\overline{e}_{1}$ by
\begin{align*}
\sigma\left(e_{0},\overline{e}_{1}\right)=\left(\frac{1}{\pi\sqrt{-1}} \overline{e}_{1},-\pi\sqrt{-1}e_{0}\right);&\quad
\sigma\left(e_{0}^{k-a}\overline{e}_{1}^{a}\right)=(\sqrt{-1})^{-k}\pi^{2a-k}e_{0}^{a}\overline{e}_{1}^{k-a}.
\end{align*}
Moreover, we have
\[
\sigma\left(\eta_{i,a}\frac{1}{\sqrt{z}}e_{0}^{k-a}\overline{e}_{1}^{a}\right)=\eta_{i,k-a}\frac{1}{\sqrt{z}}e_{0}^{a}\overline{e}_{1}^{k-a}.
\]
Hence, when $k\equiv 1,3\bmod{4}$, the element $\eta_{i}$ is fixed by $\sigma$ and
\[
\displaystyle\eta_{i}=-\sum_{a=0}^{k}\binom{k}{a}\eta_{i,a}\frac{1}{\sqrt{z}}e_{0}^{k-a}\overline{e}_{1}^{a} \in\bigoplus_{a=0}^{k}\mathbb{Q}\llbracket w\rrbracket v_{0}^{a}v_{1}^{k-a}.
\]
This gives $\nabla\eta_{i}=v_{0}^{k}z^{i}\frac{{\mathrm{d}}z}{z}.$

When $k=4r+4$ and $a=2r+2$, the exponents of $w$ of the expansion of $\frac{(-A_{0})^{a}B_{0}^{k-a}}{w^{i+3/2}}$ are in $\frac{1}{2}+\mathbb{Z}$ and one takes \[\eta_{i,2r+2}\sim\frac{w^{r-i+1/2}}{r-i+1/2}G_{i}\]
where $G_{i}\in 1+w\mathbb{Q}\llbracket w\rrbracket$. More precisely, we have
\[
G_{i}= 1+\sum_{n=1}^{\infty}\frac{r-i+1/2}{r-i+1/2+n}\gamma_{k,n}w^{n}.
\]
Moreover,
$\eta_{i,2r+2}\frac{1}{\sqrt{z}}(e_{0}\overline{e}_{1})^{2r+2}$ has no exponential factor as a combination of monomials $v_{0}^{k-b}v_{1}^{b}$ and is invariant under $\sigma$. Hence, when $k\equiv 0\bmod{4}$, we take an element 
\[
\eta_{i}=-\sum_{a=0}^{k}\binom{k}{a}\eta_{i,a}\frac{1}{\sqrt{z}}e_{0}^{k-a}\overline{e}_{1}^{a}\in\displaystyle\bigoplus_{a=0}^{k}\mathbb{Q}\llbracket z\rrbracket v_{0}^{a}v_{1}^{k-a}
\]
This gives $\nabla\eta_{i}=v_{0}^{k}z^{i}\frac{{\mathrm{d}}z}{z}$.

Now, suppose that $k=4r+2$, a positive integer congruent to $2$ modulo $4$, and $a=2r+1$.
Using the expansion \eqref{expansion_of_(-AB)^half}, we have the residue:
\[
\Res_{w}\frac{(-A_{0})^{a}B_{0}^{k-a}}{w^{i+3/2}}=\gamma_{k,i-r},
\]
which vanishes if and only if $i\leq r-1$. Therefore, for $i\geq r$, there exists 
\[
\eta_{i,2r+1}\sim \frac{1}{r-i} w^{r-i}\cdot H_{i}
\]
such that
\[
\mathrm{d}\eta_{i,2r+1}=\left(w^{-i-3/2}-\gamma_{k,i-r}w^{-r-3/2}\right)(-A_{0}B_{0})^{2r+1}\mathrm{d}w
\]
where $H_{i}\in 1+w\mathbb{Q}\llbracket w\rrbracket$.
Also,
$\eta_{i,2r+1}\frac{1}{\sqrt{z}}(e_{0}\overline{e}_{1})^{2r+1}$
is invariant under $\sigma$.
Moreover,
$\eta_{i,2r+1}\frac{1}{\sqrt{z}}(e_{0}\overline{e}_{1})^{2r+1}$
has no exponential factor as a combination of monomials $v_{0}^{k-b}v_{1}^{b}$. Thus, we have
\begin{align*}
v_{0}^{k}z^{i}\frac{{\mathrm{d}}z}{z}-\gamma_{k,i-r}v_{0}^{k}z^{r}\frac{\mathrm{d}z}{z}&=\nabla\left(-\sum_{\substack{a=0\\a\neq k/2}}^{k}\binom{k}{a}\frac{\eta_{i,a}-\gamma_{k,i-r}\eta_{r,a}}{\sqrt{z}}e_{0}^{k-a}\overline{e}_{1}^{a}
-\binom{k}{k/2}\frac{\eta_{i,2r+1}}{\sqrt{z}}e_{0}^{2r+1}\overline{e}_{1}^{2r+1}\right)
\end{align*}
and hence we find an element $\eta_{i}$ in $\displaystyle\bigoplus_{a=0}^{k}\mathbb{Q}\llbracket z\rrbracket v_{0}^{a}v_{1}^{k-a}$ such that $\nabla\eta_{i}=v_{0}^{k}z^{i}\frac{{\mathrm{d}}z}{z}-\gamma_{k,i-r}v_{0}^{k}z^{r}\frac{{\mathrm{d}}z}{z}$.
\end{proof}

Now, we define some elements in the de Rham cohomology and the de Rham cohomology with compact support. In next subsection, we will prove that these elements form bases of the corresponding cohomology spaces (see Corollary \ref{de_rham_basis}).

\begin{definition}\label{de_Rham_basis_elements}
In the de Rham cohomology $H^{1}_{\mathrm{dR}}\big(\mathbb{G}_{\mathrm{m}},\sqrt{z}\Sym^{k}{\mathrm{Kl}}_{2}\big)$,
the classes $\omega_{k,i}$ are given as follows.
\begin{enumerate}
\item When $k\equiv 0,1,3\bmod{4}$, define the $k^{\prime}+1$ elements:
\[
\omega_{k,i}=v_{0}^{k}z^{i}\frac{\mathrm{d}z}{z} {\text{ for }}i=0,1,2,\cdots,k^{\prime}.
\]
\item When $k\equiv 2\bmod{4}$, write $k=4r+2$ and define the $k^{\prime}$ elements:
\[
\omega_{k,i}=
\begin{cases}
v_{0}^{k}z^{i}\frac{{\mathrm{d}}z}{z}, & 0\leq i\leq r-1;\\
v_{0}^{k}z^{i}\frac{{\mathrm{d}}z}{z}-\gamma_{k,i-r}v_{0}^{k}z^{r}\frac{{\mathrm{d}}z}{z}, & r+1\leq i\leq 2r,
\end{cases}
\]
where $\gamma_{k,n}\in\mathbb{Q}$ are the coefficients in the asymptotic expansion of $(-A_{0}\left(z\right)B_{0}\left(z\right))^{k/2}$
given by \eqref{expansion_of_(-AB)^half} above.
\end{enumerate}
\end{definition}

From the Lemma \ref{lem_formal_sol}, we define the elements in the compactly supported de Rham cohomology.

\begin{definition}\label{compact_support_de_rham_bisis_elements}
We define certain elements
in the compactly supported de Rham cohomology $H^{1}_{\mathrm{dR,c}}
\big(\mathbb{G}_{\mathrm{m}},\sqrt{z}\Sym^{k}{\mathrm{Kl}}_{2}\big)$
as follows.
\begin{enumerate}
\item When $k\equiv 0,1,3\bmod{4}$, define $k^{\prime}+1$ elements
\[
\widetilde{\omega}_{k,i}=(\xi_{i},\eta_{i},\omega_{k,i}) {\text{ for }}0\leq i\leq k^{\prime},
\]
where $\nabla \xi_{i}=\nabla\eta_{i}=\omega_{k,i}$.
\item When $k\equiv 2\bmod{4}$, write $k=4r+2$ and define $k^{\prime}$ elements
\[
\widetilde{\omega}_{k,i}=(\xi_{i},\eta_{i},\omega_{k,i}) {\text{ for }} 0\leq i\leq r-1 {\text{ and }} r+1\leq i\leq k^{\prime}, 
\]
where $\nabla \xi_{i}=\nabla\eta_{i}=\omega_{k,i}$.
\item In the case that $k\equiv 2\bmod{4}$, write $k=4r+2$ and further define
\[
\widehat{m}_{2r+1}=\left(0,2^{k}\frac{1}{\sqrt{z}}(e_{0}\overline{e}_{1})^{2r+1},0\right)\in H^{1}_{\mathrm{dR,c}}\Big(\mathbb{G}_{\mathrm{m}},\sqrt{z}\Sym^{k}{\mathrm{Kl}}_{2}\Big).
\]
Here, $\overline{e}_{1} := \pi\sqrt{-1}e_{1}$ and $e_{0},e_{1}$ are horizontal sections of $\mathrm{Kl}_{2}$ defined in \eqref{horizontal_sections_Kl2}.
\end{enumerate}
\end{definition}

\begin{remark}
The pair of the formal solutions $(\xi_{i},\eta_{i})$ is unique
except in the case that there are solutions $(\xi,\eta)$
to $\nabla\xi = \nabla\eta = 0$.
The latter happens only when $k\equiv 2\bmod{4}$.
In this circumstance,
we fix the choice of $(\xi_{i},\eta_{i})$
to be the one constructed in the proof of Lemma \ref{lem_formal_sol}.
These expressions will be used in the computations of Poincar\'{e} pairing and period pairing in the rest of this paper.
\end{remark}

Further, we define the middle part de Rham cohomology, $H^{1}_{\mathrm{mid}}\big(\mathbb{G}_{\mathrm{m}},\sqrt{z}\Sym^{k}{\mathrm{Kl}}_{2}\big)$, to be the image of the projection $H^{1}_{\mathrm{dR,c}}\big(\mathbb{G}_{\mathrm{m}},\sqrt{z}\Sym^{k}{\mathrm{Kl}}_{2}\big)\rightarrow H^{1}_{\mathrm{dR}}\big(\mathbb{G}_{\mathrm{m}},\sqrt{z}\Sym^{k}{\mathrm{Kl}}_{2}\big)$, $(\xi,\eta,\omega)\mapsto\omega$. We therefore have
\begin{align*}
\omega_{k,i}\in H^{1}_{\mathrm{mid}}\Big(\mathbb{G}_{\mathrm{m}},\sqrt{z}\Sym^{k}{\mathrm{Kl}}_{2}\Big) & {\text{ for }} 0\leq i\leq k^{\prime} {\text{ when }} k\equiv 0,1,3\bmod{4};\\
\omega_{k,i}\in H^{1}_{\mathrm{mid}}\Big(\mathbb{G}_{\mathrm{m}},\sqrt{z}\Sym^{k}{\mathrm{Kl}}_{2}\Big) & {\text{ for }} 0\leq i\leq k^{\prime},\, i\neq r {\text{ when }} k=4r+2.
\end{align*}
We may regard $H^{1}_{\mathrm{mid}}\big(\mathbb{G}_{\mathrm{m}},\sqrt{z}\Sym^{k}{\mathrm{Kl}}_{2}\big)$ as a quotient of $H^{1}_{\mathrm{dR,c}}\big(\mathbb{G}_{\mathrm{m}},\sqrt{z}\Sym^{k}{\mathrm{Kl}}_{2}\big)$ containing the class of elements $\widetilde{\omega}_{k,i}$.

\subsection{Poincar\'{e} pairing} We have the following Poincar\'{e} pairing between the de Rham cohomology and the compactly supported de Rham cohomology. Recall the algebraic pairing $\left\langle\ ,\ \right\rangle_{\mathrm{alg}}$ is introduced in section \ref{algebraic_topological_pairing}.
\[
\begin{tikzcd}[row sep=tiny]
{H^{1}_{\text{dR},c}\Big(\mathbb{G}_{\mathrm{m}},\sqrt{z}\Sym^{k}{\text{Kl}}_{2}\Big)\otimes H^{1}_{\text{dR}}\Big(\mathbb{G}_{\mathrm{m}},\sqrt{z}\Sym^{k}{\text{Kl}}_{2}\Big)} \arrow[r, "{\left\langle\ ,\ \right\rangle_{\text{Poin}}}"] & \mathbb{Q}(-k-1)                                                                                                                    \\
{(\widehat{m}_{0},\widehat{m}_{\infty},\omega)\otimes\eta} \arrow[r, mapsto]                                                                                                                                                                                                                 & {\Res_{z}\left\langle\widehat{m}_{0},\eta\right\rangle_{{\mathrm{alg}}}+\Res_{w}\left\langle\widehat{m}_{\infty},\eta\right\rangle_{{\mathrm{alg}}}}
\end{tikzcd}
\]
Here, a one-form $\eta$ occurs in $\langle \widehat{m},\eta\rangle_{\mathrm{alg}}$. This algebraic pairing means $\langle \widehat{m},f\rangle_{\mathrm{alg}} \mathrm{d}z$ whenever $\eta = f\mathrm{d}z$. The notation $\mathbb{Q}(-k-1)$ is the $(k+1)$-time tensor product of the Tate structures $\mathbb{Q}(-1)$. As a vector space, $\mathbb{Q}(-k-1)$ is nothing but $\mathbb{Q}$. Here, in consideration of Hodge filtrations, we use $\mathbb{Q}(-k-1)$ instead of $\mathbb{Q}$ to indicate the Hodge filtrations on both sides respect the Poincar\'{e} pairing. Note that the Poincar\'{e} pairing induces on the middle part de Rham cohomology which we still call it $\langle\ ,\ \rangle_{\mathrm{Poin}}$:
\[
\begin{tikzcd}[row sep=tiny]
{H^{1}_{\mathrm{mid}}\Big(\mathbb{G}_{\mathrm{m}},\sqrt{z}\Sym^{k}{\text{Kl}}_{2}\Big)\otimes H^{1}_{\mathrm{mid}}\Big(\mathbb{G}_{\mathrm{m}},\sqrt{z}\Sym^{k}{\text{Kl}}_{2}\Big)} \arrow[rr, "{\left\langle\ ,\ \right\rangle_{\text{Poin}}}"] &  & \mathbb{Q}(-k-1).
\end{tikzcd}
\]

\begin{proposition}\label{de_rham_paiting_matrix}
Under the notation as in Definition \ref{compact_support_de_rham_bisis_elements}, for $j\geq 0$, we have the Poincar\'{e} pairing
\begin{align*}
\left\langle \widetilde{\omega}_{k,i}, v_{0}^{k}z^{j}\frac{{\mathrm{d}}z}{z} \right\rangle_{\mathrm{Poin}}&=
\begin{cases}
0 & {\text{if }}i+j\leq k^{\prime}-1,\ k:{\text{arbitrary,}}\\
(-2)^{k^{\prime}}\frac{k^{\prime}!}{k!!}& {\text{if }}i+j= k^{\prime},\ k\equiv 1,3\bmod{4},\\
\frac{\binom{k}{k/2}}{2^{k}(r-i+1/2)}& {\text{if }}i+j= k^{\prime},\ k\equiv 0\bmod{4},\ k=4r+4,\\
-\frac{\binom{k}{k/2}}{2^{k}(r-i)}& {\text{if }}i+j= k^{\prime},\ k\equiv 2\bmod{4},\ k=4r+2.
\end{cases}
\end{align*}
Moreover, if $k=4r+2$, we have
\[
\left\langle\widehat{m}_{2r+1},v_{0}^{k}z^{j}\frac{{\mathrm{d}}z}{z}\right\rangle_{\mathrm{Poin}}=\begin{cases}
0 & {\text{if }}j<r,\\
\gamma_{k,j-r} & {\text{if }}j\geq r.
\end{cases}
\]
In particular, the Poincar\'{e} pairing matrix between $k'+1$ elements in Definition \ref{compact_support_de_rham_bisis_elements} and $k'+1$ elements $\left\{v_{0}^{k}z^{i}\frac{{\mathrm{d}z}}{z}\right\}_{i=0}^{k^{\prime}}$ in $H^{1}_{\mathrm{dR}}\big(\mathbb{G}_{\mathrm{m}},\sqrt{z}\Sym^{k}{\mathrm{Kl}}_{2}\big)$ is non-degenerate.
\end{proposition}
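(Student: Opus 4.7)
The plan is to compute the pairing via the residue formula, reducing to an asymptotic analysis of a single auxiliary series. First, since $\xi_i\in\bigoplus_a \mathbb{Q}\llbracket z\rrbracket\,v_0^a v_1^{k-a}$ by Lemma \ref{lem_formal_sol}, one computes $\langle\xi_i, v_0^k z^j\frac{\mathrm{d}z}{z}\rangle_{\mathrm{alg}} = (-1)^k z^j\xi_{i,0}(z)\,\mathrm{d}z$, where $\xi_{i,0}\in\mathbb{Q}\llbracket z\rrbracket$ is the $v_1^k$-coefficient of $\xi_i$; this form has no $z^{-1}\mathrm{d}z$-component for $j\geq 0$, so the residue at $0$ vanishes. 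The pairing therefore equals $\Res_w\langle\eta_i,v_0^k z^j\frac{\mathrm{d}z}{z}\rangle_{\mathrm{alg}}$.

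Using $v_0 = B_0 e_0 - A_0\overline{e}_1$ (from the Wronskian relation), I expand both $\eta_i$ and $v_0^k z^j\frac{\mathrm{d}z}{z}$ in the horizontal frame $\{\tfrac{1}{\sqrt z}e_0^{k-a}\overline{e}_1^a\}$. Together with the pairing $\langle \tfrac{1}{\sqrt z}e_0^{k-a}\overline{e}_1^a,\tfrac{1}{\sqrt z}e_0^{k-b}\overline{e}_1^b\rangle_{\mathrm{alg}} = \tfrac{(-1)^a a! b!}{2^k k!}\delta_{a+b,k}$ this yields
\[
\Res_w\bigl\langle\eta_i, v_0^k z^j\tfrac{\mathrm{d}z}{z}\bigr\rangle_{\mathrm{alg}} = \frac{(-1)^k}{2^k}\cdot\bigl(\text{coefficient of }w^{j+1/2}\text{ in }S_i(w)\bigr),\quad S_i:=\sum_{a=0}^k\binom{k}{a}\eta_{i,a}A_0^{k-a}B_0^a.
\]
The exponentials in $\eta_{i,a}$ and in $A_0^{k-a}B_0^a$ cancel term by term: for $a\neq k/2$ each summand has leading form $\tfrac{(-1)^{k-a}}{k-2a}w^{k/2-i}(1+O(\sqrt{w}))$. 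The Galois involution $\sigma:w^{1/4}\mapsto\sqrt{-1}\,w^{1/4}$ satisfies $\sigma(S_i)=-S_i$ (from the transformation of $A_j, B_j$ under monodromy used in the proof of Lemma \ref{lem_formal_sol}), which together with the parity of $k/2-i$ forces $S_i$ to advance in integer powers of $w$ above its leading order.

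The leading-order analysis then splits by the parity of $k$. For $k$ odd the leading coefficient at $w^{k/2-i}$ is $\sum_a\binom{k}{a}\tfrac{(-1)^{k-a}}{k-2a}$, which I would evaluate by induction on $k'$ (via a two-term recursion obtained by tracking the endpoint terms, with base case $k=1$) to match the stated value $(-2)^{k'}k'!/k!!$ after multiplying by $(-1)^k/2^k$. For $k$ even the leading $w^{k/2-i}$ coefficient vanishes under the substitution $a\leftrightarrow k-a$, and the $a=k/2$ term takes over at the lower order $w^{k/2-i-1/2}$; substituting the explicit expansion of $\eta_{i,k/2}$ from Lemma \ref{lem_formal_sol} produces $\tfrac{\binom{k}{k/2}}{2^k(r-i+1/2)}$ for $k=4r+4$ and $-\tfrac{\binom{k}{k/2}}{2^k(r-i)}$ for $k=4r+2$. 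When $i+j\leq k'-1$, the required power $w^{j+1/2}$ lies strictly below the leading order of $S_i$ in every case, so the coefficient vanishes.

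The $\widehat{m}_{2r+1}$-pairing (for $k=4r+2$) is parallel but shorter: only the $b=k/2$ summand of $v_0^k z^j\frac{\mathrm{d}z}{z}$ pairs non-trivially with $\tfrac{1}{\sqrt z}(e_0\overline{e}_1)^{2r+1}$, and the residue evaluates via \eqref{expansion_of_(-AB)^half} to $\gamma_{k,j-r}$ for $j\geq r$ and to $0$ otherwise. Non-degeneracy of the $(k'+1)\times(k'+1)$ matrix follows from the established vanishing pattern: the $\widetilde{\omega}_{k,i}$-entries form an anti-triangular block with nonzero anti-diagonal $i+j=k'$, and in the $k\equiv 2\bmod 4$ case the $\widehat{m}_{2r+1}$ row fills the missing anti-diagonal slot at column $j=r$ via $\gamma_{k,0}=1$, so reversing column order produces an upper-triangular matrix with nonzero diagonal. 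The main obstacle is the precise asymptotic tracking of $S_i$ — in particular the combinatorial identity for $k$ odd and the Galois-invariance argument that isolates the $a=k/2$ contribution as the dominant one for $k$ even.
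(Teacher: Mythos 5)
Your proposal is correct and follows essentially the same route as the paper's proof: the residue at $0$ vanishes because $\xi_{i,0}\in\mathbb{Q}\llbracket z\rrbracket$, the residue at $\infty$ is computed in the horizontal frame with the exponentials cancelling pairwise, the $a=k/2$ term is isolated as the sole contribution for even $k$, and non-degeneracy follows from the anti-triangular shape with the $\widehat{m}_{2r+1}$ row filling column $j=r$. The only substantive difference is that the paper evaluates the odd-$k$ combinatorial sum $\frac{1}{2^{k}}\sum_{a}\frac{(-1)^{a}\binom{k}{a}}{k-2a}$ by citing Lemma 3.18 of \cite{MR4578001} rather than by your proposed induction, and it deduces the vanishing of the $a\neq k/2$ contributions for even $k$ directly from the half-integer exponents having no residue rather than from your (equally valid) Galois anti-invariance argument.
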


\begin{proof}
In this proof, we will follow the notations as in the proof of Lemma \ref{lem_formal_sol}. We first discuss the residue at $z=0$. For any $k$ and $0\leq i,j\leq k^{\prime}$, we compute
\begin{align*}
\Res_{z}\left\langle \xi_{i},v_{0}^{k}z^{j}\frac{{\mathrm{d}}z}{z}\right\rangle_{\mathrm{alg}}&=\sum_{a=0}^{k}\Res_{z}\left\langle \xi_{i,a}v_{0}^{a}v_{1}^{k-a},v_{0}^{k}z^{j}\frac{{\mathrm{d}}z}{z}\right\rangle_{\mathrm{alg}}\\
&=\Res_{z}\left\langle \xi_{i,0}v_{1}^{k},v_{0}^{k}z^{j}\frac{{\mathrm{d}}z}{z}\right\rangle_{\mathrm{alg}}\\
&=\Res_{z}\left((-1)^{k}\xi_{i,0}z^{j}\right)=0
\end{align*}
where $\xi_{i,0}\in\mathbb{Q}\llbracket z\rrbracket$.

Next, we discuss the residue at $z=\infty$. When $k\equiv 1,3\bmod{4}$ and for any $0\leq i,j\leq k^{\prime}$, we compute
\begin{align*}
\Res_{w}\left\langle\eta_{i},v_{0}^{k}z^{j}\frac{{\mathrm{d}}z}{z}\right\rangle_{\mathrm{alg}} &=-\sum_{a=0}^{k}\binom{k}{a}\Res_{w}\left\langle\eta_{i,a}\frac{e_{0}^{k-a}\overline{e}_{1}^{a}}{\sqrt{z}},v_{0}^{k}z^{j}\frac{{\mathrm{d}}z}{z}\right\rangle_{\mathrm{alg}}\\
&=\sum_{a,b=0}^{k}\binom{k}{a}\binom{k}{b}\Res_{w}\left\langle\eta_{i,a}\frac{e_{0}^{k-a}\overline{e}_{1}^{a}}{\sqrt{z}},\frac{(-A_{0})^{b}B_{0}^{k-b}}{w^{j+3/2}}\frac{e_{0}^{k-b}\overline{e}_{1}^{b}}{\sqrt{z}}\mathrm{d}w\right\rangle_{\mathrm{alg}}\\
&=\frac{1}{2^{k}}\sum_{a=0}^{k}(-1)^{a}\binom{k}{a}\Res_{w}\left(\eta_{i,a}\frac{1}{w^{j+3/2}}(-A_{0})^{k-a}B_{0}^{a}{\mathrm{d}}w\right)\\
&=\frac{1}{2^{k}}\sum_{a=0}^{k}\frac{(-1)^{a}\binom{k}{a}}{k-2a}\Res_{w}\left(w^{(k-1)/2-i-j-1}G_{i,a}F_{j,k-a}{\mathrm{d}}w\right)\\
&=
\begin{cases}
0 & {\text{ if }}i+j\leq k^{\prime}-1,\\
(-2)^{k^{\prime}}\frac{k^{\prime}!}{k!!} & {\text{ if }} i+j=k^{\prime}.
\end{cases}
\end{align*}
where $G_{i,a},F_{j,k-a}\in 1+\sqrt{w}\mathbb{Q}\llbracket\sqrt{w}\rrbracket$ and the last equality follows from \cite[lemma 3.18]{MR4578001}.

When $k\equiv 0\bmod{4}$, write $k=4r+4$. For any $0\leq i,j\leq k^{\prime}$, we compute
\begin{align*}
\Res_{w}\left\langle\eta_{i},v_{0}^{k}z^{j}\frac{{\mathrm{d}}z}{z}\right\rangle_{\mathrm{alg}}&=\frac{1}{2^{k}}\sum_{\substack{a=0\\a\neq\frac{k}{2}}}^{k}\frac{(-1)^{a}\binom{k}{a}}{k-2a}\Res_{w}\left(w^{(k-1)/2-i-j-1}G_{i,a}F_{j,k-a}{\mathrm{d}}w\right)\\
&\qquad+\frac{(-1)^{k/2}}{2^{k}}\binom{k}{k/2}\Res_{w}\left(\eta_{i,2r+2}\frac{1}{w^{j+3/2}}(-A_{0}B_{0})^{2r+2}{\mathrm{d}}w\right)\\
&=\frac{1}{2^{k}}\sum_{\substack{a=0\\a\neq\frac{k}{2}}}^{k}\frac{(-1)^{a}\binom{k}{a}}{k-2a}\Res_{w}\left(w^{(k-1)/2-i-j-1}G_{i,a}F_{j,k-a}{\mathrm{d}}w\right)\\
&\qquad+\frac{\binom{k}{k/2}}{2^{k}(r-i+1/2)}\Res_{w}\left(w^{k^{\prime}-i-j-1}\cdot G_{i}F_{2r+2}{\mathrm{d}}w\right)\\
&=
\begin{cases}
0 & {\text{ if }}i+j\leq k^{\prime}-1,\\
\frac{\binom{k}{k/2}}{2^{k}(r-i+1/2)} & {\text{ if }}i+j=k^{\prime}.
\end{cases}
\end{align*}
where $G_{i,a},F_{j,k-a}\in 1+\sqrt{w}\mathbb{Q}\llbracket\sqrt{w}\rrbracket$, $G_{i}\in 1+w\mathbb{Q}\llbracket w\rrbracket$ and $F_{2r+2}=\left(\sum_{n=0}^{\infty}\frac{\left(\left(2n-1\right)!!\right)^{3}}{2^{5n}n!}w^{n}\right)^{2r+2}$.

When $k\equiv 2\bmod{4}$, the computation is similar to the case $k\equiv 0\bmod{4}$.

Finally, we compute
\begin{align*}
\Res_{w}\left\langle\frac{2^{k}(e_{0}\overline{e}_{1})^{2r+1}}{\sqrt{z}},v_{0}^{k}z^{j}\frac{{\mathrm{d}}z}{z}\right\rangle_{\mathrm{alg}}&=-\sum_{b=0}^{k}\binom{k}{b}\Res_{w}\left\langle\frac{2^{k}(e_{0}\overline{e}_{1})^{2r+1}}{\sqrt{z}},\frac{(-A_{0})^{b}B_{0}^{k-b}}{w^{j+3/2}}\frac{e_{0}^{k-b}\overline{e}_{1}^{b}}{\sqrt{z}}{\mathrm{d}}w\right\rangle_{\mathrm{alg}}\\
&=-\binom{k}{k/2}\Res_{w}\left\langle\frac{2^{k}(e_{0}\overline{e}_{1})^{2r+1}}{\sqrt{z}},\frac{(-A_{0}B_{0})^{2r+1}}{w^{j+3/2}}\frac{(e_{0}\overline{e}_{1})^{2r+1}}{\sqrt{z}}{\mathrm{d}}w\right\rangle_{\mathrm{alg}}\\
&=(-1)^{2r+2}\Res_{w}\left(\frac{(-A_{0}B_{0})^{2r+1}}{w^{j+3/2}}{\mathrm{d}}w\right)\\
&=\Res_{w}\left(w^{k/4-j-3/2}\sum_{n=0}^{\infty}\gamma_{k,n}w^{n}{\mathrm{d}}w\right)\\
&=\begin{cases}
0 & {\text{if }}j< r,\\
\gamma_{k,j-r} & {\text{if }}j\geq r.
\end{cases}
\end{align*}
Combining these residues, we obtain this proposition. 
\end{proof}

\begin{corollary}[Bases in de Rham side]\label{de_rham_basis}
Let $k$ be a positive integer.
\begin{enumerate}
\item $H^{1}_{\mathrm{dR}}\big(\mathbb{G}_{\mathrm{m}},\sqrt{z}\Sym^{k}{\mathrm{Kl}}_{2}\big)$ has basis
$\left\{v_{0}^{k}z^{j}\frac{{\mathrm{d}}z}{z}\right\}_{j=0}^{k^{\prime}}$.
\item $H^{1}_{\mathrm{dR,c}}\big(\mathbb{G}_{\mathrm{m}},\sqrt{z}\Sym^{k}{\mathrm{Kl}}_{2}\big)$ has basis
\[
\begin{cases}
\left\{\widetilde{\omega}_{k,j}\right\}_{j=0}^{k^{\prime}} & {\text{ if }}k\equiv 0,1,3\bmod{4},\\
\left\{\widetilde{\omega}_{k,j}\right\}_{j=0}^{r-1}\cup \left\{\widetilde{\omega}_{k,j}\right\}_{j=r+1}^{k^{\prime}}\cup \left\{\widehat{m}_{2r+1}\right\} & {\text{ if }}k\equiv 2\bmod{4} {\text{ with }} k=4r+2.
\end{cases}
\]
\item $H_{{\mathrm{dR,mid}}}^{1}\big(\mathbb{G}_{\mathrm{m}},\sqrt{z}\Sym^{k}{\mathrm{Kl}}_{2}\big)$ has basis
\[
\begin{cases}
\left\{\omega_{k,i}\right\}_{i=0}^{k^{\prime}} & {\text{ if }}k\equiv 0,1,3\bmod{4},\\
\left\{\omega_{k,i}\right\}_{i=0}^{r-1}\cup \left\{\omega_{k,i}\right\}_{i=r+1}^{k^{\prime}} & {\text{ if }} k\equiv 2\bmod{4} {\text{ with }} k=4r+2.
\end{cases}
\]
\end{enumerate}
\end{corollary}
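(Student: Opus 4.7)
The proof is a direct linear-algebraic consequence of the dimension formula (Proposition \ref{dimthm}) together with the explicit non-degeneracy of the Poincar\'{e} pairing matrix (Proposition \ref{de_rham_paiting_matrix}). The plan is to verify case by case that each proposed set has the correct cardinality and is linearly independent.

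For part (1), Proposition \ref{dimthm} gives $\dim H^{1}_{\mathrm{dR}}\bigl(\mathbb{G}_{\mathrm{m}},\sqrt{z}\Sym^{k}\mathrm{Kl}_{2}\bigr)=\lfloor (k+1)/2 \rfloor$. A parity check confirms that the proposed set $\{v_{0}^{k}z^{j}\tfrac{\mathrm{d}z}{z}\}_{j=0}^{k'}$ has cardinality $k'+1=\lfloor (k+1)/2 \rfloor$, matching the dimension. By Proposition \ref{de_rham_paiting_matrix}, the Poincar\'{e} pairing of these elements against the proposed basis of $H^{1}_{\mathrm{dR,c}}$ produces an antitriangular matrix with nonzero anti-diagonal entries (with the $\widehat{m}_{2r+1}$ row supplying the otherwise missing middle column when $k\equiv 2\bmod 4$). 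Hence the matrix is invertible, the $v_{0}^{k}z^{j}\tfrac{\mathrm{d}z}{z}$ are linearly independent in $H^{1}_{\mathrm{dR}}$, and by cardinality they form a basis.

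For part (2), Poincar\'{e} duality yields $\dim H^{1}_{\mathrm{dR,c}}=\dim H^{1}_{\mathrm{dR}}=\lfloor (k+1)/2 \rfloor$. A direct count shows the proposed set matches this dimension in each congruence class of $k$ modulo $4$: when $k\equiv 0,1,3\bmod 4$ there are $k'+1$ classes, and when $k=4r+2$ one has $r+r+1=2r+1=\lfloor (4r+3)/2 \rfloor$ classes including $\widehat{m}_{2r+1}$. Non-degeneracy of the same pairing matrix then forces linear independence, giving a basis.

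For part (3), by definition $H^{1}_{\mathrm{mid}}$ is the image of the natural map $H^{1}_{\mathrm{dR,c}}\to H^{1}_{\mathrm{dR}}$ sending $(\xi,\eta,\omega)\mapsto\omega$. Applying this map to the basis of part (2) outputs exactly the claimed elements $\omega_{k,i}$ (and sends $\widehat{m}_{2r+1}$ to zero in the $k\equiv 2\bmod 4$ case), so the claimed set spans $H^{1}_{\mathrm{mid}}$. Linear independence is immediate from part (1): each $\omega_{k,i}$ has a distinguished leading term $v_{0}^{k}z^{i}\tfrac{\mathrm{d}z}{z}$ with a distinct value of $i$, and these leading terms lie among the basis elements from part (1).

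The main obstacle is purely organizational rather than conceptual, namely, keeping the case $k\equiv 2\bmod 4$ straight, where the anti-diagonal structure of the pairing matrix is broken at the middle index $i=j=r$ and is restored by pairing against $\widehat{m}_{2r+1}$. Once this bookkeeping is in place, everything reduces to the input already supplied by Propositions \ref{dimthm} and \ref{de_rham_paiting_matrix}.
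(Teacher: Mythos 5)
Your proposal is correct and follows essentially the same route as the paper: the authors likewise combine the dimension count of Proposition \ref{dimthm} with the invertibility of the Poincar\'{e} pairing matrix from Proposition \ref{de_rham_paiting_matrix} and conclude via the elementary linear-algebra observation (Fact \ref{fact_linear_algebar_fact}), with part (3) following since the middle cohomology is by definition the image of $H^{1}_{\mathrm{dR,c}}\to H^{1}_{\mathrm{dR}}$. Your extra bookkeeping for the $k\equiv 2\bmod 4$ case and the leading-term argument for part (3) are accurate elaborations of what the paper leaves implicit.
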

\begin{proof}
Putting the dimension result in Proposition \ref{dimthm},
the non-vanishing determinant of the Poincar\'{e} pairing matrix in Proposition \ref{de_rham_paiting_matrix} together,
we obtain this corollary,
thanks to the following simple observation in linear algebra.
\end{proof}

\begin{fact}\label{fact_linear_algebar_fact}
Let $V$ and $W$ be two $n$-dimensional vector spaces over a field $F$. Suppose that $\langle\ ,\ \rangle:V\times W\rightarrow F$ is a bilinear pairing. If $\{v_{1},\ldots,v_{n}\}\subseteq V$ and $\{w_{1},\ldots,w_{n}\}\subseteq W$ are subsets of vectors such that the matrix
\[
\big(\langle v_{i},w_{j}\rangle\big)_{i,j=1\ldots n}\in M_{n}(F)
\]
is invertible,
then $\{v_{1},\ldots,v_{n}\}$ is a basis of $V$ and $\{w_{1},\ldots,w_{n}\}$ is a basis of $W$.
\end{fact}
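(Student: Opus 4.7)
The plan is a short linear-algebra argument that extracts linear independence of each set from the invertibility of the pairing matrix $M = \big(\langle v_i, w_j\rangle\big)_{i,j}$.

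First I would show that $\{v_1,\ldots,v_n\}$ is linearly independent. Suppose $\sum_{i=1}^{n} c_i v_i = 0$ for some scalars $c_i \in F$. Applying $\langle\,\cdot\,, w_j\rangle$ to both sides and using bilinearity gives
\[
0 = \left\langle \sum_{i=1}^{n} c_i v_i,\, w_j \right\rangle = \sum_{i=1}^{n} c_i \langle v_i, w_j\rangle
\qquad\text{for each } j = 1,\ldots,n.
\]
Thus the row vector $(c_1,\ldots,c_n)$ lies in the left kernel of $M$. Since $M$ is invertible, this left kernel is trivial, so $c_1 = \cdots = c_n = 0$. Because $\dim V = n$, the linearly independent set $\{v_1,\ldots,v_n\}$ is a basis of $V$.

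The argument for $\{w_1,\ldots,w_n\}$ is symmetric: suppose $\sum_{j=1}^{n} d_j w_j = 0$, pair on the left with each $v_i$, and obtain that the column vector $(d_1,\ldots,d_n)^{\mathrm{T}}$ lies in the right kernel of $M$, which is again trivial. Combined with $\dim W = n$, this shows $\{w_1,\ldots,w_n\}$ is a basis of $W$.

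There is no real obstacle here; the fact is essentially the coordinate expression of the statement that a bilinear pairing is non-degenerate on both sides iff its Gram matrix in arbitrary spanning sets of the correct size is invertible. The only thing to be careful about is to use the dimension hypothesis $\dim V = \dim W = n$ at the end, so that linear independence of $n$ vectors upgrades to a basis.
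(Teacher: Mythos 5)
Your proof is correct and complete: the left-kernel/right-kernel argument is exactly the standard justification, and you correctly invoke the dimension hypothesis to upgrade linear independence of $n$ vectors to a basis. The paper states this Fact without proof (calling it a ``simple observation in linear algebra''), and your argument is precisely the one the authors evidently have in mind.
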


\section{The local system and the associated homology}\label{section_Betti_side}
In this section, we study the rapid decay homology and moderate decay homology of the local system $(\sqrt{z}\Sym^{k}\mathrm{Kl}_{2})^{\nabla}$. We write down the explicit cycles in these homologies and compute their Betti intersection pairing. In the end, we finish this section by concluding the bases of these two homologies.

In order to write down the cycles in the homology, we need to understand the monodromy action of the horizontal sections of $\sqrt{z}\Sym^{k}\mathrm{Kl}_{2}$. Recall $\{e_0,e_1\}$ is the basis of the local system ${\mathrm{Kl}}_{2}^{\nabla}$ defined in \eqref{horizontal_sections_Kl2}. From \cite[10.25(ii)]{NIST:DLMF}, the modified Bessel function $I_0(t)$ is entire. On the other hand, $K_0(t)$ extends analytically to a multivalued function on $\mathbb{C}^\times$ satisfying the monodromy $K_0(e^{\pi\sqrt{-1}}t) = K_0(t) - \pi\sqrt{-1} I_0(t)$ from \cite[10.34]{NIST:DLMF}. This implies $e_{0},e_{1}$ undergo the monodromy action $T:\left(e_{0},e_{1}\right)\mapsto\left(e_{0},e_{1}+e_{0}\right)$ near $0$.
Then the basis in \eqref{eq:horizontal_basis} of the local system $(\sqrt{z}\Sym^{k}{\mathrm{Kl}}_{2})^{\nabla}$
satisfies $T:\frac{1}{\sqrt{z}}e_{0}^{a}e_{1}^{k-a}\mapsto \frac{-1}{\sqrt{z}}e_{0}^{a}(e_{1}+e_{0})^{k-a}$ near $0$.

\subsection{Rapid decay cycles}
Write $k^{\prime}=\left\lfloor\frac{k-1}{2}\right\rfloor$. Denote the chains on $\mathbb{C}^{\times}$:
\begin{align*}
\sigma_{0}&={\text{the unit circle}}, {\text{ starting at }}1 {\text{ and oriented counterclockwise}};\\
\sigma_{+}&=\text{the interval $\left[1,\infty\right)$}, {\text{ starting at }}1 {\text{ toward }}+\infty.
\end{align*}
By the asymptotic expansion \eqref{expansion_I_0_at_inf}, \eqref{expansion_K_0_at_inf}, the horizontal sections
$\frac{1}{\sqrt{z}}e_{0}^{a}e_{1}^{k-a}$ decay exponentially along $\sigma_{+}$ for $a=0,1,\ldots,k^{\prime}$.
We have the following lemma describing some elements in the rapid decay homology. 

\begin{lemma}
For $0\leq b\leq k'$, the elements
\begin{align}\label{rapid_decay_cycle_delta_b}
\delta_{b}&=\sigma_{+}\otimes\frac{1}{\sqrt{z}}e_{0}^{b}e_{1}^{k-b}-\dfrac{1}{2}\sigma_{0}\otimes\frac{1}{\sqrt{z}}e_{0}^{b}e_{1}^{k-b}+\sum_{n=1}^{k-b}d_{k-b}(n)\sigma_{0}^{2n}\otimes\frac{1}{\sqrt{z}}e_{0}^{b}e_{1}^{k-b},
\end{align}
are rapid decay cycles in $H^{\mathrm{rd}}_{1}\big(\mathbb{G}_{\mathrm{m}},(\sqrt{z}\Sym^{k}{\mathrm{Kl}}_{2})^{\nabla}\big)$, where $d_{n}(i)$ are real numbers satisfying
\begin{align*}
\sum_{i=1}^{n}d_{n}(i)\left(2i\right)^{m}&=-\dfrac{1}{2}, {\text{ for }}m=1,2,\cdots,n.
\end{align*}
In fact, by Cramer's rule, one can write $d_{n}(i)=\frac{(-1)^{i}}{n!2^{n+1}}\binom{n}{i}\frac{\left(2n-1\right)!!}{2i-1}$ uniquely.
\end{lemma}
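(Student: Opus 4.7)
\medskip
\noindent\textbf{Proof proposal.}
My plan is to verify the two defining conditions of a rapid decay cycle for each $\delta_{b}$: that the underlying chain has rapid decay at infinity, and that its boundary vanishes (where the part of the boundary ``at infinity'' is dropped, as per the definition of rapid decay homology of Bloch--Esnault).

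First I would check the decay. Only the piece $\sigma_{+}\otimes\frac{1}{\sqrt{z}}e_{0}^{b}e_{1}^{k-b}$ escapes to $\infty$; the remaining terms are supported on the compact loops $\sigma_{0}^{n}$ and are automatic. Using the asymptotic expansions \eqref{expansion_I_0_at_inf}, \eqref{expansion_K_0_at_inf} along the positive real axis, the horizontal section $\frac{1}{\sqrt{z}}e_{0}^{b}e_{1}^{k-b}$ has exponential factor behaving as $e^{2(2b-k)\sqrt{z}}$ on $\sigma_{+}$. Since $b\leq k'=\lfloor(k-1)/2\rfloor$, we have $2b-k\leq -1<0$, so this section decays exponentially at infinity and $\sigma_{+}\otimes \frac{1}{\sqrt{z}}e_{0}^{b}e_{1}^{k-b}$ is legitimate as a rapid decay chain.

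Next I would compute the boundary. From the paragraph before the statement, the monodromy at $0$ acts by
$T^{m}\!\left(\tfrac{1}{\sqrt{z}}e_{0}^{b}e_{1}^{k-b}\right)=(-1)^{m}\tfrac{1}{\sqrt{z}}e_{0}^{b}(e_{1}+me_{0})^{k-b}=(-1)^{m}\sum_{j=0}^{k-b}\binom{k-b}{j}m^{j}\tfrac{1}{\sqrt{z}}e_{0}^{b+j}e_{1}^{k-b-j}$. Writing $s_{b}=\tfrac{1}{\sqrt{z}}e_{0}^{b}e_{1}^{k-b}$ and using the standard sign convention $\partial(\sigma_{0}^{m}\otimes s_{b})=T^{m}s_{b}-s_{b}$ at the base point $z=1$, and $\partial(\sigma_{+}\otimes s_{b})=-s_{b}$ in rapid decay homology, one obtains
\[
\partial\delta_{b}=-s_{b}-\tfrac{1}{2}(Ts_{b}-s_{b})+\sum_{n=1}^{k-b}d_{k-b}(n)\bigl(T^{2n}s_{b}-s_{b}\bigr).
\]
Substituting the monodromy formula above and collecting the coefficient of $\frac{1}{\sqrt{z}}e_{0}^{b+j}e_{1}^{k-b-j}$ for each $j$: the $j=0$ contributions cancel between $-s_{b}$ and $-\tfrac{1}{2}(-2s_{b})$, and for $1\leq j\leq k-b$ the coefficient equals $\binom{k-b}{j}\bigl(\tfrac{1}{2}+\sum_{n=1}^{k-b}d_{k-b}(n)(2n)^{j}\bigr)$, which vanishes by the defining linear system for $d_{k-b}(n)$.

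Finally, the explicit formula $d_{n}(i)=\frac{(-1)^{i}}{n!\,2^{n+1}}\binom{n}{i}\frac{(2n-1)!!}{2i-1}$ would be obtained by solving the Vandermonde-type system $\sum_{i=1}^{n}d_{n}(i)(2i)^{m}=-\tfrac{1}{2}$ $(m=1,\ldots,n)$ via Cramer's rule; the ratio of Vandermonde determinants simplifies to the stated product. The only subtle point in the argument above is the bookkeeping of the sign $(-1)^{m}$ coming from $\sqrt{z}\mapsto -\sqrt{z}$ under monodromy — this is precisely the reason why only even powers $\sigma_{0}^{2n}$ (and the half-loop correction $-\tfrac{1}{2}\sigma_{0}$) appear in $\delta_{b}$, and I would state this explicitly before the computation to make the combinatorics transparent.
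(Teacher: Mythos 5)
Your proposal is correct and follows essentially the same route as the paper: the boundary computation at the base point $z=1$, using the monodromy $T^{m}\big(\tfrac{1}{\sqrt{z}}e_{0}^{b}e_{1}^{k-b}\big)=(-1)^{m}\tfrac{1}{\sqrt{z}}e_{0}^{b}(e_{1}+me_{0})^{k-b}$ and the defining linear system for $d_{k-b}(n)$, is exactly the paper's argument (up to an immaterial overall sign convention for $\partial$). The decay of $\tfrac{1}{\sqrt{z}}e_{0}^{b}e_{1}^{k-b}$ along $\sigma_{+}$ for $b\leq k'$, which you verify inside the proof, is handled by the paper in the text immediately preceding the lemma, so nothing is missing.
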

\begin{proof}
We need to prove that $d_{n}(i)$ makes $\delta_{b}$ into a cycle, that is, $\partial\delta_{b}=0$. The boundaries of chains $\sigma_{+}$ and $\sigma_{0}$ in $\delta_{b}$ support at the point $1\in\mathbb{C}^{\times}$. It suffices to check that the coefficient of $1\in \mathbb{C}^{\times}$ in $\partial\delta_{b}$ is $0$. Indeed, considering the monodromy action $T$ describe above, a direct computation shows the coefficient of $1\in\mathbb{C}^{\times}$ in $\partial\delta_{b}$ is
\begin{align*}
&\frac{1}{\sqrt{z}}e_{0}^{b}e_{1}^{k-b}-\dfrac{1}{2}\left(\frac{1}{\sqrt{z}}e_{0}^{b}e_{1}^{k-b}+\frac{1}{\sqrt{z}}e_{0}^{b}(e_{1}+e_{0})^{k-b}\right)\\
&\hspace{40pt}+\sum_{n=1}^{k-b}d_{k-b}(n)\left(\frac{1}{\sqrt{z}}e_{0}^{b}e_{1}^{k-b}-\frac{1}{\sqrt{z}}e_{0}^{b}(e_{1}+2ne_{0})^{k-b}\right)\\
=&\frac{1}{\sqrt{z}}e_{0}^{b}e_{1}^{k-b}-\dfrac{1}{2}\left(\frac{1}{\sqrt{z}}e_{0}^{b}e_{1}^{k-b}+\sum_{j=0}^{k-b}\binom{k-b}{j}\frac{1}{\sqrt{z}}e_{0}^{b+j}e_{1}^{k-b-j}\right)\\
&\hspace{40pt}+\sum_{n=1}^{k-b}d_{k-b}(n)\left(\frac{1}{\sqrt{z}}e_{0}^{b}e_{1}^{k-b}-\sum_{m=0}^{k-b}\binom{k-b}{m}(2n)^{m}\frac{1}{\sqrt{z}}e_{0}^{b+m}e_{1}^{k-b-m}\right)\\
=&-\dfrac{1}{2}\sum_{j=1}^{k-b}\binom{k-b}{j}\frac{1}{\sqrt{z}}e_{0}^{b+j}e_{1}^{k-b-j}-\sum_{m=1}^{k-b}\binom{k-b}{m}\sum_{n=1}^{k-b}d_{k-b}(n)(2n)^{m}\frac{1}{\sqrt{z}}e_{0}^{b+m}e_{1}^{k-b-m}\\
=&\sum_{j=1}^{k-b}\binom{k-b}{j}\left(-\dfrac{1}{2}-\sum_{n=1}^{k-b}d_{k-b}(n)(2n)^{j}\right)\frac{1}{\sqrt{z}}e_{0}^{b+j}e_{1}^{k-b-j}=0,
\end{align*}
where the last equality is the assumption on real numbers $d_{n}(i)$.
\end{proof}

From this lemma, we have $k^{\prime}+1$ elements $\{\delta_{b}\}_{b=0}^{k'}$ in the rapid decay homology $H^{\mathrm{rd}}_{1}\big(\mathbb{G}_{\mathrm{m}},(\sqrt{z}\Sym^{k}{\mathrm{Kl}}_{2})^{\nabla}\big)$. At the end of this section, we will prove these elements form a basis (see Corollary \ref{Betti_basis_assume_dim}).

\subsection{Moderate decay cycles} 
Define one more chain
\begin{align*}
\mathbb{R}_{+}&= \text{the half line $\left[0,\infty\right)$, starting at $0$ toward $+\infty$}.
\end{align*}
By \cite[\S 10.30(i)]{NIST:DLMF}, the modified Bessel function $K_{0}\left(t\right)$ has log pole at $0$, so the horizontal sections
$\frac{1}{\sqrt{z}}e_{0}^{a}e_{1}^{k-a}$
decay moderately along $\mathbb{R}_{+}$ near $0$ for $a=0,1,\ldots,\lfloor\frac{k}{2}\rfloor$. Moreover, by the expression \eqref{expansion_I_0K_0_at_inf}, $(I_{0}K_{0})^{a}$ decay polynomially along $\mathbb{R}_{+}$ near $\infty$. Then, we define the moderate decay cycles in $H^{\mathrm{mod}}_{1}\big(\mathbb{G}_{\mathrm{m}},(\sqrt{z}\Sym^{k}{\mathrm{Kl}}_{2})^{\nabla}\big)$
\begin{align}\label{moderate_decay_cycle_gamma_a}
\gamma_{a}=\mathbb{R}_{+}\otimes \frac{1}{\sqrt{z}}e_{0}^{a}e_{1}^{k-a}, {\text{ for }}a=0,1,2,\cdots,\left\lfloor\dfrac{k}{2}\right\rfloor.
\end{align}
They are indeed a cycle. The proof is the same as the above lemma by taking the homotopy as the radius of $\sigma_{0}$ tends to $0$ and $\sigma_{+}$ tends to $\mathbb{R}_{+}$. Since a rapid decay cycle is a moderate decay cycle as well, we have the natural map
\[
\begin{tikzcd}[row sep=tiny]
{H^{\mathrm{rd}}_{1}\big(\mathbb{G}_{\mathrm{m}},(\sqrt{z}\Sym^{k}{\mathrm{Kl}}_2)^{\nabla}\big)} \arrow[rr] \arrow[rr] &  & {H^{\mathrm{mod}}_{1}\big(\mathbb{G}_{\mathrm{m}},(\sqrt{z}{\Sym^{k}}{\mathrm{Kl}}_{2})^{\nabla}\big)}.
\end{tikzcd}
\]
This natural map sends $\delta_{b}$ to $\gamma_{b}$ for $b=0,1,\cdots,k^{\prime}$ by the homotopy argument. The following lemma shows when $k\equiv 2\bmod{4}$, $\displaystyle\sum_{j=0}^{\left(k-2\right)/4}\binom{k/2}{2j}\delta_{2j}$ belongs to the kernel of this map.

\begin{lemma}\label{kernel_cycle}
In $H^{\mathrm{mod}}_{1}\big(\mathbb{G}_{\mathrm{m}},(\sqrt{z}\Sym^{k}{\mathrm{Kl}}_{2})^{\nabla}\big)$, one has
\begin{align*}
\sum_{j=0}^{k/4}\binom{k/2}{2j}\gamma_{2j}=0 {\text{ if }}k\equiv 0\bmod{4};\\
\sum_{j=0}^{\left(k-2\right)/4}\binom{k/2}{2j}\gamma_{2j}=0 {\text{ if }}k\equiv 2\bmod{4}.
\end{align*}
\end{lemma}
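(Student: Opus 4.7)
My plan is to realize the linear combination $\sum_{j}\binom{k/2}{2j}\gamma_{2j}$ as the boundary of an explicit moderate decay 2-chain. The starting point is the algebraic identity
\[
2\sum_{j=0}^{\lfloor k/4\rfloor}\binom{k/2}{2j}\frac{e_0^{2j}e_1^{k-2j}}{\sqrt{z}}
  = \frac{e_1^{k/2}}{\sqrt{z}}\bigl[(e_1+e_0)^{k/2}+(e_1-e_0)^{k/2}\bigr]
  = s-T(s),
\]
where $s=\frac{1}{\sqrt{z}}e_1^{k/2}(e_1-e_0)^{k/2}$ and $T$ is the counter-clockwise monodromy at $z=0$, acting by $T(e_1)=e_1+e_0$, $T(e_0)=e_0$, $T(\sqrt{z})=-\sqrt{z}$ (so $T(s)=-\frac{1}{\sqrt{z}}e_1^{k/2}(e_1+e_0)^{k/2}$). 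Thus the target cycle equals $\tfrac12\,\mathbb{R}_+\otimes(s-T(s))$, and I aim to exhibit this as a boundary.

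\textbf{Step 1 (moderate decay of $s$).} Expanding $s=\sum_{a=0}^{k/2}(-1)^{a}\binom{k/2}{a}e_0^{a}e_1^{k-a}/\sqrt{z}$, each summand behaves at infinity like $e^{(4a-2k)\sqrt{z}}$ (times polynomial corrections). Since $a\le k/2$ makes this exponent non-positive whenever $\operatorname{Re}\sqrt{z}>0$, $s$ has at most polynomial growth at $\infty$ on any sector with $\operatorname{Re}\sqrt{z}>0$, in particular on the upper half plane $\mathbb{H}$ with the principal branch of $\sqrt{z}$. Near $z=0$, $e_0$ is analytic and $e_1$ has logarithmic growth, so $s\sim(\log z)^{k}/\sqrt{z}$, again moderate. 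Hence $\mathbb{H}\otimes s$ is a legitimate moderate decay 2-chain.

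\textbf{Step 2 (boundary).} By Stokes, $\partial(\mathbb{H}\otimes s)=-\mathbb{R}_+\otimes s+\mathbb{R}_-\otimes s|_{\mathbb{R}_-}$. To cancel the unwanted $\mathbb{R}_-$ contribution and produce $T(s)$ on $\mathbb{R}_+$, I would pair this with an analogous 2-chain $\mathbb{H}^-\otimes s'$ on the lower half plane, where $s'$ is chosen so that (i) $s'$ also has moderate decay on $\mathbb{H}^-$, necessitating a branch in which $\operatorname{Re}\sqrt{z}>0$ on $\mathbb{H}^-$, and (ii) $s'|_{\mathbb{R}_-}$ matches $s|_{\mathbb{R}_-}$ so that the contributions along $\mathbb{R}_-$ cancel. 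Tracking the monodromy, the combined 2-chain has boundary precisely $-\mathbb{R}_+\otimes(s-T(s))$, yielding the vanishing of the target cycle in $H_1^{\mathrm{mod}}$.

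\textbf{Main obstacle.} Condition (ii) is delicate: the principal branch expansions of $I_0(2\sqrt{z})$ and $K_0(2\sqrt{z})$ must be made compatible with continuation across $\mathbb{R}_-$, and the resulting companion section $s'$ on $\mathbb{H}^-$ must still have moderate growth at both cusps. The key input is the monodromy formula $K_0(e^{i\pi}z)=K_0(z)-i\pi I_0(z)$ (and its analogue at $e^{-i\pi}z$), together with the asymptotic expansions of $I_0,K_0$ on the imaginary axis, which let one write $s'$ as a linear combination of $s$ and $T(s)$ with matching branches. As a parallel route, since $H_1^{\mathrm{mod}}\cong(H^1_{\mathrm{dR,c}})^{\vee}$, one could instead verify $\sum_{j}\binom{k/2}{2j}\bigl\langle\gamma_{2j},\widetilde{\omega}\bigr\rangle_{\mathrm{per}}=0$ for $\widetilde{\omega}$ in the basis of $H^1_{\mathrm{dR,c}}$ from Definition~\ref{compact_support_de_rham_bisis_elements}; this unwinds to a Bessel moment identity consequence of the binomial expansion $\sum_{j}\binom{k/2}{2j}x^{2j}=\tfrac12[(1+x)^{k/2}+(1-x)^{k/2}]$ combined with the Wronskian relation $A_0B_1-A_1B_0=2$.
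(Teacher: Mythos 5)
Your starting identity and the transported section are exactly those of the paper: the published proof takes a \emph{single} $2$-chain $\Delta=\rho\otimes\frac{1}{\sqrt z}(e_1-e_0)^{k/2}e_1^{k/2}$, where $\rho$ sweeps the full angle from $0$ to $2\pi$ in one piece, and reads off $\partial\Delta=\mathbb{R}_+\otimes(s-T(s))=2\sum_j\binom{k/2}{2j}\gamma_{2j}$. So the skeleton of your argument is right, but there is a genuine gap at the point you yourself flag as the ``main obstacle,'' and it is not cosmetic. On $\mathbb{H}^-$ the section $s'$ is forced: a flat section agreeing with $s$ along $\mathbb{R}_-$ \emph{is} the analytic continuation of $s$, so there is no freedom to ``choose a branch in which $\operatorname{Re}\sqrt z>0$ on $\mathbb{H}^-$'' --- continuity across $\mathbb{R}_-$ forces $\arg\sqrt z\in(\pi/2,\pi)$ there, i.e.\ $\operatorname{Re}\sqrt z<0$, so your conditions (i) and (ii) are incompatible as stated. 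Consequently the term-by-term bound of your Step 1 (each $e_0^{a}e_1^{k-a}\sim e^{(4a-2k)\sqrt z}$ with $a\le k/2$) fails on the lower half plane: the summands with $a<k/2$ blow up exponentially under the continued branch. Moderate growth of the continued section holds only because of the product structure of $s$: pairing $e_1$ against the algebraic frame gives a constant times $K_0(t)$, while pairing $e_1-e_0$ gives a constant times $K_0(e^{-\pi\sqrt{-1}}t)$ by the connection formula, so $s$ is governed by $(I_0(t)K_0(t))^{k/2}=O(t^{-k/2})$, whose expansion \eqref{expansion_I_0K_0_at_inf} is valid on a sector wide enough to cover the whole turn $\arg z\in[0,2\pi]$. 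This is the one nontrivial analytic input of the lemma, it is exactly what the paper's citation of \eqref{expansion_I_0K_0_at_inf} supplies, and it is exactly what your write-up leaves unproved.

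Your fallback route --- verifying $\sum_j\binom{k/2}{2j}\langle\widetilde{\omega},\gamma_{2j}\rangle_{\mathrm{per,c}}=0$ against a basis of $H^1_{\mathrm{dR,c}}$ --- is circular relative to the paper's logic: Corollary \ref{linear_relation_Bessel_moments} (the sum rules) is \emph{deduced from} this lemma by precisely that pairing, and the required identities (which involve the regularized moments ${\mathrm{IKM}}^{\mathrm{reg}}_k$) do not follow from ``the binomial expansion combined with the Wronskian relation''; they are the nontrivial sum-rule identities themselves. The repair is straightforward: replace your two half-planes by the paper's single chain covering $\mathbb{C}^{\times}$ once (equivalently, glue your pieces along $\mathbb{R}_-$ with $s'$ the analytic continuation of $s$), and justify moderate growth on the whole chain via the $(I_0K_0)^{k/2}$ asymptotics rather than term by term.
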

\begin{proof}
Let $\rho :\left\{\left(x,y\right)\in\mathbb{R}^{2}\mid 0<x,y,x+y<1\right\}\rightarrow\mathbb{C}$ be the open simplicial $2$-chain
\[
\rho\left(x,y\right)=\tan\dfrac{\pi\left(x+y\right)}{2}\exp\left(4\sqrt{-1}\tan^{-1}\dfrac{y}{x}\right)
\]
that covers $\mathbb{C}$ once. If $k$ is even, by the asymptotic expansion \eqref{expansion_I_0K_0_at_inf}, the singular chain
\[
\Delta=\rho\otimes\left(\frac{1}{\sqrt{z}}(e_{1}-e_{0})^{k/2}e_{1}^{k/2}\right)
\]
has moderate growth. The boundary of $\rho$ consists of two positive real lines $\mathbb{R}_{+}$. From the monodromy action $T:(e_{0},e_{1})\mapsto (e_{0},e_{1}+e_{0})$, one computes $\partial\Delta$:
\begin{align*}
\partial\Delta &=\mathbb{R}_{+}\otimes\left(\frac{1}{\sqrt{z}}(e_{1}-e_{0})^{k/2}e_{1}^{k/2}\right)+\mathbb{R}_{+}\otimes\left(\frac{1}{\sqrt{z}}e_{1}^{k/2}(e_{0}+e_{1})^{k/2}\right)\\
&=\sum_{i=0}^{k/2}(-1)^{i}\binom{k/2}{i}\mathbb{R}_{+}\otimes \frac{1}{\sqrt{z}}e_{0}^{i}e_{1}^{k-i}+\sum_{i=0}^{k/2}\binom{k/2}{i}\mathbb{R}_{+}\otimes\frac{1}{\sqrt{z}}e_{0}^{i}e_{1}^{k-i}\\
&=\sum_{i=0}^{k/2} (1+(-1)^{i})\binom{k/2}{i}\gamma_{i}.
\end{align*}
When $k\equiv 0\bmod{4}$, this reads 
\[
\frac{1}{2}\partial\Delta = \sum_{j=0}^{k/4}\binom{k/2}{2j}\gamma_{2j}.
\]
Thus, $\sum_{j=0}^{k/4}\binom{k/2}{2j}\gamma_{2j}$ is homologous to zero in $H_{1}^{\mathrm{mod}}(\mathbb{G}_{\mathrm{m}},(\sqrt{z}\Sym^{k}\mathrm{Kl}_{2})^{\nabla})$. The case when $k\equiv 2\bmod{4}$ is similar.
\end{proof}

Here, we have written down the $1+\lfloor\frac{k}{2}\rfloor$ elements $\{\gamma_{a}\}_{a=0}^{\lfloor k/2\rfloor}$ in the moderate decay homology $H^{\mathrm{mod}}_{1}\big(\mathbb{G}_{\mathrm{m}},(\sqrt{z}\Sym^{k}{\mathrm{Kl}}_{2})^{\nabla}\big)$. At the end of this section, we will prove that these elements form a basis modulo the linear relation given in the above lemma (see Corollary \ref{Betti_basis_assume_dim}).

Similar to the middle part de Rham cohomology in the previous section, we define the middle part Betti homology $H_{1}^{{\mathrm{mid}}}\big(\mathbb{G}_{\mathrm{m}},(\sqrt{z}\Sym^{k}{\mathrm{Kl}}_{2})^{\nabla}\big)$ to be the image of $H_{1}^{\mathrm{rd}}\big(\mathbb{G}_{\mathrm{m}},(\sqrt{z}\Sym^{k}{\mathrm{Kl}}_{2})^{\nabla}\big)$ in $H_{1}^{\mathrm{mod}}\big(\mathbb{G}_{\mathrm{m}},(\sqrt{z}\Sym^{k}{\mathrm{Kl}}_{2})^{\nabla}\big)$. More precisely, we have 
\begin{align*}
\gamma_{i}\in H_{1}^{{\mathrm{mid}}}\big(\mathbb{G}_{\mathrm{m}},(\sqrt{z}\Sym^{k}{\mathrm{Kl}}_{2})^{\nabla}\big) & {\text{ for }}0\leq i\leq k^{\prime} {\text{ when }}k\equiv 0,1,3\bmod{4};\\
\gamma_{i}\in H_{1}^{{\mathrm{mid}}}\big(\mathbb{G}_{\mathrm{m}},(\sqrt{z}\Sym^{k}{\mathrm{Kl}}_{2})^{\nabla}\big) & {\text{ for }}1\leq i\leq k^{\prime} {\text{ when }}k\equiv 2\bmod{4}.
\end{align*}
Also, we may regard $H_{1}^{{\mathrm{mid}}}\big(\mathbb{G}_{\mathrm{m}},(\sqrt{z}\Sym^{k}{\mathrm{Kl}}_{2})^{\nabla}\big)$ as the quotient of $H^{\mathrm{rd}}_{1}\big(\mathbb{G}_{\mathrm{m}},(\sqrt{z}\Sym^{k}{\mathrm{Kl}}_{2})^{\nabla}\big)$ containing the class of elements $\delta_{b}$. At the end of this section, we will prove these elements form a basis (see Corollary \ref{Betti_basis_assume_dim}).

\subsection{Betti intersection pairing}
We use the topological pairing $\left\langle\ ,\ \right\rangle_{\mathrm{top}}$ introduced in section \ref{algebraic_topological_pairing} to define the Betti intersection pairing
\[
\begin{tikzcd}[row sep=tiny]
{H^{\mathrm{rd}}_{1}\big(\mathbb{G}_{\mathrm{m}},(\sqrt{z}\Sym^{k}{\mathrm{Kl}}_{2})^{\nabla}\big)\times H^{\mathrm{mod}}_{1}\big(\mathbb{G}_{\mathrm{m}},(\sqrt{z}\Sym^{k}{\mathrm{Kl}}_{2})^{\nabla}\big)} \arrow[rr, "{\left\langle\ ,\ \right\rangle_{\mathrm{Betti}}}"] &  & \mathbb{Q}                                                                                    \\
{\big(\delta=\sum_{i}\sigma_{i}\otimes s_{\sigma_{i}},\gamma =\sum_{j}\tau_{j}\otimes s_{\tau_{j}}\big)} \arrow[rr,maps to]                                                                                                                                                                             &  & {\displaystyle\sum_{i,j}\sum_{\sigma_{i}\cap\tau_{j}}\left\langle s_{\sigma_{i}},s_{\tau_{j}}\right\rangle_{\mathrm{top}}}
\end{tikzcd}
\]
Here, we need to find representatives of $\delta=\sum\sigma_{i}\otimes s_{\sigma_{i}}$ and $\gamma=\sum\tau_{j}\otimes s_{\tau_{j}}$ in their homology classes respectively such that any two chains $\sigma_{i}$ and $\tau_{j}$ intersect transversally for all $i,j$. Then, for each pair $(i,j)$, $\sigma_{i}\cap\tau_{j}$ consists of only finitely many topological intersection points. The sum over $\sigma_{i}\cap\tau_{j}$ is then the sum of the topological pairings of the corresponding sections at each intersection point. Note that the Betti intersection pairing induces on the middle part Betti homology which we still call it $\langle\ ,\ \rangle_{\mathrm{Betti}}$:
\[
\begin{tikzcd}[row sep=tiny]
{H_{1}^{\mathrm{mid}}(\mathbb{G}_{\mathrm{m}},(\sqrt{z}\Sym^{k}{\text{Kl}}_{2})^{\nabla})\times H_{1}^{\mathrm{mid}}(\mathbb{G}_{\mathrm{m}},(\sqrt{z}\Sym^{k}{\text{Kl}}_{2})^{\nabla})} \arrow[rr, "{\left\langle\ ,\ \right\rangle_{\text{Betti}}}"] &  & \mathbb{Q}                                                                                                                 
\end{tikzcd}
\]

To compute the topological pairing with respect to the elements we had written down, we need to introduce the Euler numbers and Euler polynomials. The Euler polynomials $E_{n}(x)$ are given by the following power series, and we define the numbers $E_{n}$ for $n\geq 0$ as in \cite{kim2005note},
\[
\sum_{n=0}^{\infty}E_{n}(x)\frac{z^{n}}{n!}=\frac{2}{e^{z}+1}e^{xz},\ \ E_{n}=E_{n}(0).
\]
The first few $E_{n}$ are
\[
\begin{tabular}{c|c|c|c|c|c|c}
$E_{0}$ & $E_{1}$ & $E_{2}$ & $E_{3}$ & $E_{4}$ & $E_{5}$ & $E_{6}$ \\ \hline
$1$     & $-1/2$  & $0$     & $1/4$   & $0$     & $-1/2$  & $0$    
\end{tabular}
\]
We have the inversion formula for Euler polynomials,
\[
x^{n}=E_{n}(x)+\dfrac{1}{2}\sum_{k=0}^{n-1}\binom{n}{k}E_{k}(x).
\]
Evaluating at $x=0$, we get
\begin{align}\label{Euler_number_relation}
\sum_{k=0}^{n-1}\binom{n}{k}E_{k}=-2E_{n}.
\end{align}

\begin{proposition}\label{Betti_intersection_pairing}
We have the Betti intersection pairing
\[
\left<\delta_{b},\gamma_{a}\right>_{\mathrm{Betti}}=(-1)^{a}\frac{\binom{k-b}{a}}{\binom{k}{a}}
\frac{-1}{2}E_{k-a-b}=\frac{(-1)^{a+1}}{2}\frac{(k-a)!(k-b)!}{k!}\frac{E_{k-a-b}}{(k-a-b)!}
\]
for $b=0,\cdots,k^{\prime}$ and $a=0,\cdots,\left\lfloor\frac{k}{2}\right\rfloor$.


\end{proposition}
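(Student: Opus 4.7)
The plan is to choose representatives of $\delta_{b}$ and $\gamma_{a}$ in transverse position and sum the local contributions at each intersection point. First, I would deform $\gamma_{a}$ by rotating its supporting ray $\mathbb{R}_{+}$ slightly into the upper half plane to become $\mathbb{R}_{+}\cdot e^{\sqrt{-1}\epsilon}$ for some small $\epsilon>0$, analytically continuing the section from the principal branch. This makes $\sigma_{+}\otimes s_{b}$ disjoint from the rotated ray, so $\sigma_{+}$ contributes nothing. The remaining chains $-\tfrac{1}{2}\sigma_{0}\otimes s_{b}$ and $d_{k-b}(n)\sigma_{0}^{2n}\otimes s_{b}$ each cross the rotated ray transversally, the latter at the $2n$ points on the unit circle labeled by the number of completed loops $m=0,1,\ldots,2n-1$.

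At the $m$-th such intersection, the section parallel-transported along $\sigma_{0}^{2n}\otimes s_{b}$ equals $T^{m}(s_{b}) = (-1)^{m}\tfrac{1}{\sqrt{z}}e_{0}^{b}(e_{1}+me_{0})^{k-b}$, obtained by iterating the monodromy $T$ recalled at the start of this section. Expanding binomially and invoking the topological pairing formula of section~\ref{algebraic_topological_pairing}, only the component of bidegree $(k-a,a)$ in $(e_{0},e_{1})$ pairs nontrivially with $t_{a}$, giving the local contribution
\[
(-1)^{a+m}\binom{k-b}{a}m^{N}\frac{a!(k-a)!}{k!},\qquad N:=k-a-b,
\]
with the orientation convention that makes every transverse crossing count with sign $+1$. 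Because $a\le\lfloor k/2\rfloor$ and $b\le k'$ force $a+b\le k-1$, i.e.\ $N\ge 1$, the factor $0^{N}=0$ annihilates both the $-\tfrac{1}{2}\sigma_{0}$ contribution and the $m=0$ term of each $\sigma_{0}^{2n}$.

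Summing the remaining intersections yields
\[
\langle\delta_{b},\gamma_{a}\rangle_{\mathrm{Betti}} = (-1)^{a}\binom{k-b}{a}\frac{a!(k-a)!}{k!}\sum_{n=1}^{k-b}d_{k-b}(n)\,T_{N}(n),
\]
with $T_{N}(n):=\sum_{m=0}^{2n-1}(-1)^{m}m^{N}$. The combinatorial heart of the proof is the identity
\[
\sum_{n=1}^{n_{0}}d_{n_{0}}(n)\,T_{N}(n) = -\tfrac{1}{2}E_{N} \qquad (1\le N\le n_{0}:=k-b),
\]
which I would verify via the exponential generating function $F(t):=\sum_{n}d_{n_{0}}(n)\tfrac{1-e^{2nt}}{1+e^{t}}$: the defining relations $\sum_{n}d_{n_{0}}(n)(2n)^{m'}=-\tfrac{1}{2}$ for $1\le m'\le n_{0}$ collapse the numerator, modulo $O(t^{n_{0}+1})$, to $\tfrac{1}{2}(e^{t}-1)$, and then the Euler generating function $\tfrac{2}{1+e^{t}}=\sum_{N\ge 0}E_{N}\tfrac{t^{N}}{N!}$, together with its easy consequence $\tfrac{e^{t}-1}{1+e^{t}}=-\sum_{N\ge 1}E_{N}\tfrac{t^{N}}{N!}$, yields the identity upon matching coefficients of $t^{N}/N!$ for $N\le n_{0}$. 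Substituting back and rewriting $\binom{k-b}{a}\tfrac{a!(k-a)!}{k!}=\binom{k-b}{a}/\binom{k}{a}$ produces the claimed formula.

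The main obstacle is the sign bookkeeping: consistently picking a branch of $\tfrac{1}{\sqrt{z}}$ along multiply-wound paths, correctly propagating the alternating sign $(-1)^{m}$ from the monodromy, and fixing the orientation convention at every transverse crossing so that the global pairing acquires the correct factor $-\tfrac{1}{2}E_{N}$. Once these are settled, the Euler-number identity follows formally from the defining property of the coefficients $d_{k-b}(n)$.
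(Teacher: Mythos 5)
Your proposal is correct, and its geometric half is essentially the paper's argument: the paper also puts the two cycles in transverse position (it shifts $\sigma_{+}$ and the starting point of $\sigma_{0}$ slightly below the real axis rather than rotating $\mathbb{R}_{+}$ slightly above it, which is equivalent), counts $2n$ crossings of $\sigma_{0}^{2n}$ with the ray carrying monodromy $T^{m}$ at the $m$-th one, uses $0^{k-a-b}=0$ to kill the $-\tfrac{1}{2}\sigma_{0}$ term, and arrives at the same expression $(-1)^{a}\tfrac{\binom{k-b}{a}}{\binom{k}{a}}\sum_{n}d_{k-b}(n)T_{k-a-b}(2n)$. Where you genuinely diverge is the evaluation of this alternating-power-sum combination: the paper cites Kim's closed formula $T_{n}(k)=\tfrac{(-1)^{k+1}}{2}\sum_{\ell<n}\binom{n}{\ell}E_{\ell}k^{n-\ell}+\tfrac{E_{n}}{2}(1+(-1)^{k+1})$ and then invokes the inversion identity $\sum_{\ell<n}\binom{n}{\ell}E_{\ell}=-2E_{n}$, whereas you package everything into the generating function $\sum_{n}d_{k-b}(n)\tfrac{1-e^{2nt}}{1+e^{t}}$, collapse the numerator to $\tfrac{1}{2}(e^{t}-1)$ modulo $O(t^{k-b+1})$ using the defining relations of the $d_{k-b}(n)$, and read off $-\tfrac{1}{2}E_{N}$ from $\tfrac{e^{t}-1}{1+e^{t}}=-\sum_{N\geq 1}E_{N}t^{N}/N!$. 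Your route is self-contained (no external reference for the power-sum formula) and makes the role of the defining property of $d_{n}(i)$ more transparent; the paper's route keeps the computation term-by-term and reuses an identity it has already recorded. Both are complete; the only point to tighten in a write-up is the orientation convention at the crossings, which you flag yourself and which the paper also treats implicitly.
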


\begin{proof}
Fix some $-\pi<\theta_{0}<0$ and let $x_{0}=\exp(\sqrt{-1}\theta_{0})$. To compute the pairing $\left<\delta_{b},\gamma_{a}\right>_{\mathrm{Betti}}$, we move the ray $\sigma_{+}$ by adding the scalar $(x_{0}-1)$ and let the circle $\sigma_{0}$ start at $x_{0}$. Then the component $\sigma_{0}^{j}\otimes \frac{1}{\sqrt{z}}e_{0}^{b}e_{1}^{k-b}$ in the deformed $\delta_{b}$ meets $\gamma_{a}$ topologically $j$ times at the same point $+1\in\mathbb{C}^{\times}$. At the $i$-th intersection, the factor $\frac{1}{\sqrt{z}}e_{0}^{b}e_{1}^{k-b}$ becomes $(-1)^{i-1}\frac{1}{\sqrt{z}}e_{0}^{b}(e_{1}+(i-1)e_{0})^{k-b}$ and we have
\[
\left\langle(-1)^{i-1}\frac{1}{\sqrt{z}}e_{0}^{b}(e_{1}+(i-1)e_{0})^{k-b},\frac{1}{\sqrt{z}}e_{0}^{a}e_{1}^{k-a}\right\rangle_{\mathrm{top}}=(-1)^{i-1}(i-1)^{k-a-b}(-1)^{a}\dfrac{\binom{k-b}{a}}{\binom{k}{a}}.
\]
By adding these contributions, we obtain
\[
\left<\delta_{b},\gamma_{a}\right>_{\mathrm{Betti}}=\dfrac{\binom{k-b}{a}}{\binom{k}{a}}(-1)^{a}\sum_{n=1}^{k-b}d_{k-b}(n)T_{k-a-b}(2n),
\]
where
\[
T_{n}(k)=\sum_{\ell =0}^{k-1}(-1)^{\ell}\ell^{n}=-1+2^{n}-\cdots +(-1)^{k-1}(k-1)^{n}.
\]

Kim \cite{kim2005note} gave the following relation for $T_{n}(k)$:
\[
T_{n}(k)=\dfrac{(-1)^{k+1}}{2}\sum_{\ell =0}^{n-1}\binom{n}{\ell}E_{\ell}k^{n-\ell}+\dfrac{E_{n}}{2}\left(1+(-1)^{k+1}\right).
\]
Now, we have the following computation
\begin{align*}
\sum_{n=1}^{k-b}d_{k-b}(n)T_{k-a-b}(2n)&=\sum_{n=1}^{k-b}d_{k-b}(n)\left[\dfrac{-1}{2}\sum_{\ell =0}^{k-a-b-1}\binom{k-a-b}{\ell}E_{\ell}(2n)^{k-a-b-\ell}\right]\\
&=\dfrac{-1}{2}\sum_{\ell =0}^{k-a-b-1}\binom{k-a-b}{\ell}E_{\ell}\sum_{n=1}^{k-b}d_{k-b}(n)(2n)^{k-a-b-\ell}\\
&=\dfrac{1}{4}\displaystyle\sum_{\ell =0}^{k-a-b-1}\binom{k-a-b}{\ell}E_{\ell} \\
&= \dfrac{-1}{2}E_{k-a-b}
\end{align*}
where the last equality follows from \eqref{Euler_number_relation}.
\end{proof}

Consider the $(k^{\prime}+1)\times (k^{\prime}+1)$ pairing matrix
\[
B_{k}=
\begin{cases}
\left(\left<\delta_{b},\gamma_{a}\right>_{\mathrm{Betti}}\right)_{\ 0\leq b\leq k^{\prime},\ 0\leq a\leq\lfloor \frac{k}{2}\rfloor} & {\text{ if }}k {\text{ is odd,}}\\
\left(\left<\delta_{b},\gamma_{a}\right>_{\mathrm{Betti}}\right)_{0\leq b\leq k^{\prime},\ 1\leq a\leq\frac{k}{2}} & {\text{ if }}k {\text{ is even.}}
\end{cases}
\]
By Proposition \ref{Betti_intersection_pairing},
when $k$ is even, we have
\[
B_{k}=
\left(\begin{array}{ccc}
\frac{(-1)^{2}}{2}\frac{(k-1)!k!}{k!}\frac{E_{k-1}}{(k-1)!} & \cdots & \frac{(-1)^{k/2+1}}{2}\frac{(k/2)!k!}{k!}\frac{E_{k/2}}{(k/2)!}\\ 
\vdots & \ddots &\vdots \\
\frac{(-1)^{2}}{2}\frac{(k-1)!(k/2+1)!}{k!}\frac{E_{k/2}}{(k/2)!} & \cdots &\frac{(-1)^{k/2+1}}{2}\frac{(k/2)!(k/2+1)!}{k!}\frac{E_{1}}{\left(1\right)!}
\end{array}\right)
\]
and that
\[
B_{k-1}=
\left(\begin{array}{ccc}
\frac{(-1)}{2}\frac{(k-1)!(k-1)!}{(k-1)!}\frac{E_{k-1}}{(k-1)!} & \cdots & \frac{(-1)^{k/2}}{2}\frac{(k/2)!(k-1)!}{(k-1)!}\frac{E_{k/2}}{(k/2)!}\\ 
\vdots & \ddots &\vdots \\
\frac{(-1)}{2}\frac{(k-1)!(k/2)!}{(k-1)!}\frac{E_{k/2}}{(k/2)!} & \cdots &\frac{(-1)^{k/2}}{2}\frac{(k/2)!(k/2)!}{(k-1)!}\frac{E_{1}}{\left(1\right)!}
\end{array}\right).
\]
Then we obtain the relation
\begin{equation}\label{Betti_relation}
B_{k}=-\frac{1}{k} \diag(k,k-1,\cdots,k/2+1)\cdot B_{k-1}.
\end{equation}

Thus, $B_{k}$ and $B_{k-1}$ have the same rank whenever $k$ is even. Moreover, we may compute the determinant of $B_{k}$ explicitly as given in the following proposition.
\begin{proposition}\label{Betti_determinant}
The determinant of $B_{k}$ is given by the following.
\begin{enumerate} 
\item When $k$ is odd, we have
\begin{align*}
\det B_{k}&=2^{-k-1}\prod_{a=1}^{k^{\prime}}a^{k^{\prime}+1-2a}(2a+1)^{k^{\prime}-2a}.
\end{align*}

\item When $k$ is even, we have
\begin{align*}
\det B_{k}&=(-1)^{(k^{\prime}+1)(k^{\prime}+3)}2^{-k}\prod_{a=1}^{k^{\prime}}(a+1)^{k^{\prime}-2a-1}(2a+1)^{k^{\prime}+1-2a}.
\end{align*}
\end{enumerate}
In particular, they are all non-vanishing.
\end{proposition}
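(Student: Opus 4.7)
The plan is to treat the two parities of $k$ separately. For even $k$, identity \eqref{Betti_relation} gives
\[
\det B_k = \left(\frac{-1}{k}\right)^{k'+1}\prod_{j=k/2+1}^{k} j \cdot \det B_{k-1},
\]
so this case reduces immediately to the odd formula applied to $k-1$ (note that $(k-1)'=k/2-1=k'$ here). Expanding the product of integers and comparing exponents shows that the factors $(-1/k)^{k'+1}\prod j$ precisely convert the odd-$k$ expression into the claimed even-$k$ expression, including the global sign $(-1)^{(k'+1)(k'+3)}$ which matches $(-1)^{k/2}$. This step is routine rearrangement.

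The substance lies in the odd case $k=2k'+1$. The key structural observation is that $E_n=0$ for every even $n\geq 2$, while for $a,b\in\{0,1,\dots,k'\}$ the argument $n=k-a-b$ satisfies $1\leq n\leq k$ and is even precisely when $a+b$ is odd. Hence by Proposition \ref{Betti_intersection_pairing} the entry $(B_k)_{b,a}$ vanishes whenever $a\not\equiv b\pmod 2$. Applying the same permutation to rows and columns to list even indices before odd ones (so no permutation sign survives) block-diagonalises $B_k$ into an even--even block of size $\lceil (k'+1)/2\rceil$ and an odd--odd block of size $\lfloor (k'+1)/2\rfloor$, and $\det B_k$ is the product of the two sub-determinants.

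To evaluate each block I would factor from row $b$ the scalar $(k-b)!/k!$ and from column $a$ the scalar $\tfrac{1}{2}(-1)^{a+1}(k-a)!$, reducing to a pure Hankel matrix $H$ whose entries are values of $E_{2j+1}/(2j+1)!$. Since $2/(e^{z}+1)=1-\tanh(z/2)$ and $\tanh$ admits the classical Stieltjes continued fraction
\[
\tanh z=\cfrac{z}{1+\cfrac{z^{2}}{3+\cfrac{z^{2}}{5+\cdots}}},
\]
the standard Hankel-determinant formula for a moment sequence with a Jacobi continued-fraction expansion expresses each block determinant as an explicit product of odd integers (essentially the partial quotients $2m+1$, each raised to a power equal to the number of later partial quotients). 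Multiplying the two block determinants back together with the row and column scalar factors collected above then yields the stated product $2^{-k-1}\prod_{a=1}^{k'} a^{k'+1-2a}(2a+1)^{k'-2a}$.

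The main obstacle is the bookkeeping: translating the product of continued-fraction partial quotients of $\tanh$ into the asymmetric exponents $k'+1-2a$ and $k'-2a$ in the closed form, while tracking the parity-dependent sizes of the two blocks, the alternating signs from the column factor $(-1)^{a+1}$, and the factorial prefactors. The analytic ingredients (vanishing of $E_{2m}$ for $m\geq 1$ and the continued fraction of $\tanh$) are classical; all the real work lies in verifying that these elementary factors collapse into the single uniform product formula, and in particular that the odd-block contribution together with the even-block contribution exactly account for every factor $a^{k'+1-2a}(2a+1)^{k'-2a}$.
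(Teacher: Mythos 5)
Your outline is sound and all the ingredients are correctly identified, but it takes a genuinely different route from the paper. For even $k$ the paper does \emph{not} invoke the reduction \eqref{Betti_relation}; it recomputes $\det B_k$ directly by the same Hankel-determinant method as in the odd case. Your shortcut via \eqref{Betti_relation} is legitimate (and the ratio it produces, $(-1)^{k/2}(k/2)^{-k/2}(k-1)!!$, does reconcile the two closed forms, using $(-1)^{(k'+1)(k'+3)}=(-1)^{k'+1}$ and $k!/(k/2)!=2^{k/2}(k-1)!!$), so it saves one of the two computations. For the odd case, the paper factors out the row and column scalars exactly as you do, arriving at the aerated Hankel determinant $\det\bigl(E_{k-a-b}/(k-a-b)!\bigr)_{0\le a,b\le k'}$, but then evaluates it in one stroke by citing a published closed form (Eq.\ H12 of \cite{MR4105523}) for the normalized Euler numbers $\mathcal{E}_{2n-1}=(-1)^n2^{2n-1}E_{2n-1}$; it never block-diagonalises by parity. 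Your plan --- split into the even--even and odd--odd blocks (correct: $E_{k-a-b}$ vanishes exactly when $a\not\equiv b\bmod 2$ since $1\le k-a-b\le k$ and $E_{2m}=0$ for $m\ge 1$), then evaluate the two resulting Hankel determinants $H^{(0)},H^{(1)}$ of the sequence $E_{2m+1}/(2m+1)!$ from the Stieltjes fraction of $\tanh$ --- is essentially a self-contained proof of the ingredient the paper imports by citation, since $\sum_m E_{2m+1}z^{2m+1}/(2m+1)!$ is the odd part of $-\tanh(z/2)$ and the S-fraction formula yields both shifted Hankel determinants as products of the partial quotients. What your approach buys is independence from the external reference and a transparent source for the exponents $k'+1-2a$ and $k'-2a$ (they count surviving partial quotients in the two blocks); what it costs is the parity bookkeeping and the double index-reversal needed to turn $\bigl(E_{k-a-b}\bigr)$ into the standard Hankel form $\bigl(E_{i+j+1}/(i+j+1)!\bigr)$, which you have correctly flagged but not executed.
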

\begin{proof}
Set $\mathcal{E}_{2n-1}=(-1)^{n}2^{2n-1}E_{2n-1}$. Apply the result \cite[Eq. H12]{MR4105523} in the following computations.

When $k=2k'+1$ is odd, we have
\begin{align*}
\det B_{k} &=\frac{(-1)^{(k'+1)(k'+2)/2}}{(2\cdot k!)^{k'+1}}\left[\prod_{i=k'+1}^{k}i!\right]^{2}\det\left(\begin{array}{ccc}
\frac{E_{k}}{k!} & \cdots &\frac{E_{k'+1}}{(k'+1)!}\\ 
\vdots & \ddots &\vdots \\
\frac{E_{k'+1}}{(k'+1)!} & \cdots &\frac{E_{1}}{\left(1\right)!}
\end{array}\right)\\
&=\frac{1}{(2\cdot k!)^{k'+1}}\left[\prod_{i=k'+1}^{k}i!\right]^{2}\frac{1}{2^{(k'+1)^{2}}}\det\left(\begin{array}{ccc}
\frac{\mathcal{E}_{k}}{k!} & \cdots &\frac{\mathcal{E}_{k'+1}}{(k'+1)!}\\ 
\vdots & \ddots &\vdots \\
\frac{\mathcal{E}_{k'+1}}{(k'+1)!} & \cdots &\frac{\mathcal{E}_{1}}{\left(1\right)!}
\end{array}\right)\\
&=\frac{1}{2^{(k'+1)(k'+2)}(k!)^{k'+1}}\left[\prod_{i=k'+1}^{k}i!\right]^{2} 2^{k'^{2}}\frac{k'!}{k!}\prod_{j=1}^{k'}\frac{(j-1)!^{2}}{(2j-1)!^{2}}\\
&=\frac{1}{2^{k+1}}\prod_{a=1}^{k^{\prime}}a^{k^{\prime}+1-2a}(2a+1)^{k^{\prime}-2a}.
\end{align*}

When $k=2k'+2$ is even, we have
\begin{align*}
\det B_{k}&=\frac{(-1)^{(k'+1)(k'+4)/2}}{(2\cdot k!)^{k'+1}}\left[\prod_{i=k'+1}^{k-1}i!\left(i+1\right)!\right] \det\left(\begin{array}{ccc}
\frac{E_{k-1}}{(k-1)!} & \cdots &\frac{E_{k'+1}}{(k'+1)!}\\ 
\vdots & \ddots &\vdots \\
\frac{E_{k'+1}}{(k'+1)!} & \cdots &\frac{E_{1}}{\left(1\right)!}
\end{array}\right)\\
&=\frac{(-1)^{(k'+1)(k'+4)/2}}{(2\cdot k!)^{k'+1}}\left[\prod_{i=k'+1}^{k-1}i!\left(i+1\right)!\right] \\
&\hspace{100pt}\cdot\frac{(\sqrt{-1})^{(k'+1)(k'+2)}}{2^{(k'+1)^2}}\det\left(\begin{array}{ccc}
\frac{\mathcal{E}_{k-1}}{(k-1)!} & \cdots &\frac{\mathcal{E}_{k'+1}}{(k'+1)!}\\ 
\vdots & \ddots &\vdots \\
\frac{\mathcal{E}_{k'+1}}{(k'+1)!} & \cdots &\frac{\mathcal{E}_{1}}{\left(1\right)!}
\end{array}\right)\\
&=\frac{(-1)^{(k'+1)(k'+3)}}{2^{(k'+1)(k'+2)}(k!)^{k'+1}}\left[\prod_{i=k'+1}^{k-1}i!\left(i+1\right)!\right] 2^{k'^{2}}\frac{k'!}{(k-1)!}\prod_{j=1}^{k'}\frac{(j-1)!^{2}}{(2j-1)!^{2}}\\
&=\frac{(-1)^{(k^{\prime}+1)(k^{\prime}+3)}}{2^k}\prod_{a=1}^{k^{\prime}}(a+1)^{k^{\prime}-2a-1}(2a+1)^{k^{\prime}+1-2a}.
\qedhere
\end{align*}
\end{proof}

Finally, before we conclude the basis of Betti homologies, we need to introduce the period pairings here. However, the details of the pairings will be given in the next section. By \cite[Corollary 2.11]{MR4578001}, there exist two perfect pairings 
\[
\begin{tikzcd}
{H_{1}^{\mathrm{rd}}\big(\mathbb{G}_{\mathrm{m}},(\sqrt{z}\Sym^{k}{\mathrm{Kl}}_{2})^{\nabla}\big)_{\mathbb{C}}\times H^{1}_{\mathrm{dR}}\big(\mathbb{G}_{\mathrm{m}},\sqrt{z}\Sym^{k}{\mathrm{Kl}}_{2}\big)_{\mathbb{C}}} \arrow[rr, "{\left\langle\ ,\ \right\rangle_{\mathrm{per}}}"] &  & \mathbb{C}
\end{tikzcd}
\]
\[
\begin{tikzcd}
{H^{1}_{\mathrm{dR,c}}\big(\mathbb{G}_{\mathrm{m}},\sqrt{z}\Sym^{k}{\mathrm{Kl}}_{2}\big)_{\mathbb{C}}\times H_{1}^{\mathrm{mod}}\big(\mathbb{G}_{\mathrm{m}},(\sqrt{z}\Sym^{k}{\mathrm{Kl}}_{2})^{\nabla}\big)_{\mathbb{C}}} \arrow[rr, "{\left\langle\ ,\ \right\rangle_{\mathrm{per,c}}}"] &  & \mathbb{C}
\end{tikzcd}
\]
Here, the notation $V_{\mathbb{C}}$ means $V\otimes_{\mathbb{Q}}\mathbb{C}$. For the next corollary, we just need to use the fact that these pairings are perfect. In the next section, we will compute these two pairings explicitly.

\begin{corollary}\label{Betti_basis_assume_dim}
The natural map
\[
\begin{tikzcd}
{H^{\mathrm{rd}}_{1}\big(\mathbb{G}_{\mathrm{m}},(\sqrt{z}{\Sym^{k}{\mathrm{Kl}}_2)^{\nabla}}\big)} \arrow[rr] \arrow[rr] &  & {H^{\mathrm{mod}}_{1}\big(\mathbb{G}_{\mathrm{m}},(\sqrt{z}{\Sym^{k}{\mathrm{Kl}}_2)^{\nabla}}\big)}                 
\end{tikzcd}
\]
sending $\delta_{b}$ to $\gamma_{b}$ is an isomorphism when $k\equiv 0,1,3\bmod 4$ and has a one-dimensional kernel when $k\equiv 2\bmod 4$. Moreover, we find the following.
\begin{enumerate}
\item $H^{\mathrm{rd}}_{1}\big(\mathbb{G}_{\mathrm{m}},(\sqrt{z}\Sym^{k}{\mathrm{Kl}}_{2})^{\nabla}\big)$ has basis $\{\delta_{b}\}_{b=0}^{k^{\prime}}$.
\item $H^{\mathrm{mod}}_{1}\big(\mathbb{G}_{\mathrm{m}},(\sqrt{z}\Sym^{k}{\mathrm{Kl}}_{2})^{\nabla}\big)$ has basis
\[
\begin{cases}
\left\{\gamma_{a}\right\}_{a=0}^{k^{\prime}} & {\text{ if }} k {\text{ is odd;}}\\
\left\{\gamma_{a}\right\}_{a=1}^{k/2} & {\text{ if }} k {\text{ is even.}}
\end{cases}
\]
\item $H^{\mathrm{mid}}_{1}\big(\mathbb{G}_{\mathrm{m}},(\sqrt{z}\Sym^{k}{\mathrm{Kl}}_{2})^{\nabla}\big)$ has basis
\[
\begin{cases}
\left\{\gamma_{a}\right\}_{a=0}^{k^{\prime}} & {\text{ if }}k\equiv 0,1,3\bmod{4};\\
\left\{\gamma_{a}\right\}_{a=1}^{k^{\prime}} & {\text{ if }}k\equiv 2\bmod{4}.\\
\end{cases}
\]
\end{enumerate}
\end{corollary}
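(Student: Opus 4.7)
The plan is to combine the non-degeneracy of the Betti intersection matrix $B_{k}$ from Proposition~\ref{Betti_determinant} with dimension constraints coming from the perfect period pairings just cited, and then to analyze the natural map $H_{1}^{\mathrm{rd}}\to H_{1}^{\mathrm{mod}}$ using the single relation supplied by Lemma~\ref{kernel_cycle}.

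I would start with a dimension count. The perfect period pairings identify $H_{1}^{\mathrm{rd}}$ and $H_{1}^{\mathrm{mod}}$ respectively with the $\mathbb{C}$-duals of $H^{1}_{\mathrm{dR}}$ and $H^{1}_{\mathrm{dR,c}}$, both of which have dimension $k'+1$ by Proposition~\ref{dimthm} and Corollary~\ref{de_rham_basis}. A direct parity check shows $\lfloor k/2\rfloor=k'$ when $k$ is odd and $k/2=k'+1$ when $k$ is even, so $B_{k}$ is a square matrix of size $(k'+1)\times(k'+1)$ in both parities, and Proposition~\ref{Betti_determinant} gives $\det B_{k}\neq 0$. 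Applying Fact~\ref{fact_linear_algebar_fact} to the Betti intersection pairing then immediately yields parts (1) and (2): $\{\delta_{b}\}_{b=0}^{k'}$ is a basis of $H_{1}^{\mathrm{rd}}$, and $\{\gamma_{a}\}_{a=0}^{k'}$ (for $k$ odd) or $\{\gamma_{a}\}_{a=1}^{k/2}$ (for $k$ even) is a basis of $H_{1}^{\mathrm{mod}}$.

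For the natural map and part (3), I would study the image $V:=\mathrm{span}\{\gamma_{b}\}_{b=0}^{k'}$ case by case. When $k$ is odd, $V$ is already the full moderate-decay basis above, so the map is an isomorphism and $H_{1}^{\mathrm{mid}}=H_{1}^{\mathrm{mod}}$. When $k\equiv 0\bmod 4$, writing $k=4r+4$, the relation in Lemma~\ref{kernel_cycle} has leading term $\gamma_{2r+2}=\gamma_{k/2}$ with coefficient $1$ and all other terms in $V$, so $\gamma_{k/2}$ can be expressed through $V$; hence $V$ contains the basis $\{\gamma_{a}\}_{a=1}^{k/2}$ and equals $H_{1}^{\mathrm{mod}}$, making the map an isomorphism. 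When $k\equiv 2\bmod 4$, writing $k=4r+2$, the relation $\sum_{j=0}^{r}\binom{2r+1}{2j}\gamma_{2j}=0$ lies entirely among $\{\gamma_{b}\}_{b=0}^{k'}$. Eliminating $\gamma_{0}$ with this relation shows $V=\mathrm{span}\{\gamma_{a}\}_{a=1}^{k'}$, which has dimension exactly $k'$ as a subset of the moderate-decay basis. The preimage $\sum_{j=0}^{r}\binom{2r+1}{2j}\delta_{2j}$ is a nonzero combination of basis elements that maps to $0$, exhibiting an explicit kernel generator; by rank-nullity the kernel is one-dimensional, and $\{\gamma_{a}\}_{a=1}^{k'}$ is a basis of $H_{1}^{\mathrm{mid}}=V$.

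The substantive analytic and combinatorial content has already been carried out in Proposition~\ref{Betti_determinant} and Lemma~\ref{kernel_cycle}, so what remains is bookkeeping. The only place this bookkeeping is subtle is tracking whether the single moderate-decay relation of Lemma~\ref{kernel_cycle} lies inside the image of the natural map: for $k\equiv 0\bmod 4$ it involves $\gamma_{k/2}$ outside that image and so leaves the map injective, whereas for $k\equiv 2\bmod 4$ it lies entirely inside the image and forces the one-dimensional kernel. I expect this dichotomy to be the only delicate point of the argument.
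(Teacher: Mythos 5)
Your proposal is correct and follows essentially the same route as the paper: the dimension count coming from the perfect period pairings, the non-vanishing of $\det B_{k}$ from Proposition \ref{Betti_determinant} together with Fact \ref{fact_linear_algebar_fact} give parts (1) and (2), and Lemma \ref{kernel_cycle} supplies the kernel element in the case $k\equiv 2\bmod 4$. The only (harmless) divergence is the case $k\equiv 0\bmod 4$, where the paper deduces the isomorphism from the rank relation \eqref{Betti_relation} between $B_{k}$ and $B_{k-1}$, whereas you use the Lemma \ref{kernel_cycle} relation to place $\gamma_{k/2}$ in the span of $\{\gamma_{0},\dots,\gamma_{k'}\}$ and conclude surjectivity; your write-up also makes explicit the rank--nullity bookkeeping showing the kernel is exactly one-dimensional for $k\equiv 2\bmod 4$, which the paper leaves implicit.
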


\begin{proof}
From the perfect period pairings, the dimension of rapid decay homology and moderate decay homology are both $k^{\prime}+1$ by Proposition \ref{dimthm}. Then, by the Fact \ref{fact_linear_algebar_fact} and the non-vanishing determinant of $B_{k}$ in Proposition \ref{Betti_determinant}, we conclude $1.$ and $2.$ This also shows the natural map which sends $\delta_{b}$ to $\gamma_{b}$ for $b=0,\ldots,k'$ is an isomorphism when $k\equiv 1,3\bmod 4$. When $k\equiv 2\bmod{4}$, Lemma \ref{kernel_cycle} describes the one-dimensional kernel of the natural map. Moreover, $B_{k}$ has full rank $k/2$ when $k$ is even by the relation \eqref{Betti_relation}. Hence, we conclude that the natural map is an isomorphism when $k\equiv 0\bmod{4}$.
\end{proof}

\section{Twisted moments as periods}\label{section_period_pairing}

In this section, we compute the period pairing of the basis of de Rham cohomology and Betti homology in Corollary \ref{de_rham_basis} and Corollary \ref{Betti_basis_assume_dim}. Also, we interpret these periods as the Bessel moments and regularized Bessel moments.

\subsection{Bessel moments and regularized Bessel moments}\label{section_Bessel_moments}

The Bessel moments are defined by
\begin{align*}
{\mathrm{IKM}}_{k}(a,b)=\int_{0}^{\infty}I_{0}^{a}(t)K_{0}^{k-a}(t)t^{b}\mathrm{d}t.
\end{align*}
provided the convergence of the integral, that is, for non-negative integers $k,a,b$ satisfying $a\leq k^{\prime}$, $b\geq 0$ or $a=\frac{k}{2}$, $0\leq b<k^{\prime}$. The justification is given in the following lemma. Moreover, if $a=\frac{k}{2}$ and $b\geq k^{\prime}$, by analyzing the singular integral, we could define the regularized Bessel moments $\mathrm{IKM}_{k}^{\mathrm{reg}}\left(\frac{k}{2},b\right)$ by subtracting the singular part of the integral. The precise definition is also given in the following lemma.

\begin{lemma}\label{regularized_lemma} The integral expression of Bessel moments
\[
{\mathrm{IKM}}_{k}(a,b)=\int_{0}^{\infty}I_{0}^{a}(t)K_{0}^{k-a}(t)t^{b}{\mathrm{d}}t
\]
converges for non-negative integers $k,a,b$ satisfying $a\leq k^{\prime}$, $b\geq 0$ or $a=\frac{k}{2}$, $0\leq b<k^{\prime}$. Moreover, in the case that $k$ is even, $a=\frac{k}{2}$, and $b\geq k^{\prime}$ with $b$ even, the following two limits exist for $2j\geq k^{\prime}$:
\begin{align*}
{\mathrm{IKM}}^{\mathrm{reg}}_{k}\left(\frac{k}{2},2j\right)&:=\lim_{t\rightarrow\infty}\left(\int_{0}^{t}(I_{0}K_{0})^{2r+2}s^{2j}{\mathrm{d}}s-\sum_{m=0}^{j-r-1}\frac{\gamma_{k,j-r-1-m}t^{2m+1}}{2^{k-2j+2m}(2m+1)}\right)\ \ {\text{if }}k=4r+4,\\
{\mathrm{IKM}}^{\mathrm{reg}}_{k}\left(\frac{k}{2},2j\right)&:=\lim_{\substack{t\rightarrow\infty\\\varepsilon\rightarrow 0^{+}}}\left(\int_{\varepsilon}^{t}(I_{0}K_{0})^{2r+1}s^{2j}{\mathrm{d}}s-\frac{2\gamma_{k,j-r}}{2^{k-2j}}\int_{\varepsilon}^{t}\frac{\mathrm{d}s}{s}-\sum_{m=0}^{j-r-1}\frac{\gamma_{k,j-r-1-m}t^{2m+2}}{2^{k-2j+2m+1}(2m+2)}\right)\\
& \hspace{10cm}{\text{if }}k=4r+2.
\end{align*}
\end{lemma}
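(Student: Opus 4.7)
The plan is to reduce the lemma to asymptotic analysis of $I_0^a(t)K_0^{k-a}(t)t^b$ at the endpoints $t=0$ and $t=\infty$: convergence of $\mathrm{IKM}_k(a,b)$ amounts to integrability at both endpoints, and the regularizations consist of subtracting off exactly the divergent asymptotic terms at infinity while checking that what remains has a finite limit.

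For the convergence claim, near $t=0$ the standard expansions $I_0(t)=1+O(t^2)$ and $K_0(t)=-\log(t/2)-\gamma+O(t^2\log t)$ give $I_0^aK_0^{k-a}t^b = O\bigl((\log t)^{k-a}t^b\bigr)$, which is integrable on $(0,1]$ whenever $b\geq 0$. Near infinity, \eqref{expansion_I_0_at_inf} and \eqref{expansion_K_0_at_inf} combine to
\[
I_0^a(t)\,K_0^{k-a}(t)\,t^b\;\sim\;\frac{\pi^{(k-2a)/2}}{2^{k/2}}\,e^{(2a-k)t}\,t^{b-k/2}\bigl(1+O(1/t)\bigr).
\]
For $a\leq k' < k/2$ the exponential factor $e^{(2a-k)t}$ decays and the integral converges for every $b\geq 0$; for $k$ even and $a=k/2$ the exponential is trivial, so the integrand behaves as $t^{b-k/2}$ and integrability at infinity holds exactly when $b<k/2-1=k'$, matching the hypothesis.

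For the regularized moments when $a=k/2$ and $b=2j\geq k'$, I would combine \eqref{expansion_of_(-AB)^half} with the identities $-A_0(z)B_0(z)=4I_0(t)K_0(t)$ and $w=4/t^2$ to derive
\[
(I_0K_0)^{k/2}\,s^{2j}\;\sim\;\sum_{n\geq 0}\gamma_{k,n}\,2^{2n-k/2}\,s^{2j-k/2-2n}\qquad(s\to\infty).
\]
For $k=4r+4$ the divergent-at-infinity terms are those with $n\leq j-r-1$; antidifferentiating them and substituting $m=j-r-1-n$ produces exactly the polynomial $\sum_{m=0}^{j-r-1}\frac{\gamma_{k,j-r-1-m}}{2^{k-2j+2m}(2m+1)}t^{2m+1}$ that is subtracted in the statement. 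For $k=4r+2$ the same expansion contributes one additional term of exponent $-1$ at $n=j-r$ with coefficient $2\gamma_{k,j-r}/2^{k-2j}$, which matches the coefficient of $\int_\varepsilon^t ds/s$ in the lemma and accounts for the logarithmic divergence. A standard truncation estimate then shows that the remainder decays at least like $s^{-3}$ at infinity, so its integral converges as $t\to\infty$.

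The main obstacle is the joint limit in $t\to\infty$ and $\varepsilon\to 0^+$ in the $k\equiv 2\pmod 4$ case: the counter-term $\int_\varepsilon^t ds/s = \log(t/\varepsilon)$ produces a $\log\varepsilon$ contribution, while $(I_0K_0)^{2r+1}s^{2j}$ is itself only log-integrable near $0$. The plan is to pair the two $\varepsilon$-dependent terms into $\int_\varepsilon^t\!\bigl[(I_0K_0)^{2r+1}s^{2j}-\frac{2\gamma_{k,j-r}}{2^{k-2j}s}\bigr]ds$, split as $\int_\varepsilon^1+\int_1^t$, and use the local expansion of $(I_0K_0)^{2r+1}s^{2j}$ near $0$ together with the explicit asymptotic at $\infty$ to track every $\log\varepsilon$, $\varepsilon^{2m+2}$ and $t^{2m+2},\log t$ contribution; once these singular pieces are shown to cancel identically, the remaining finite pieces assemble into the claimed regularized value.
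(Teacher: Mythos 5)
Your overall strategy coincides with the paper's: the same endpoint asymptotics at $0$ and $\infty$ for the convergence claim, and the same termwise integration of the expansion $(I_0K_0)^{k/2}s^{2j}\sim\sum_{n\geq 0}\gamma_{k,n}2^{2n-k/2}s^{2j-k/2-2n}$ to identify the counter-terms. Your coefficient bookkeeping (the substitution $m=j-r-1-n$, and the coefficient $2\gamma_{k,j-r}/2^{k-2j}$ of the $s^{-1}$ term when $k=4r+2$) is correct, and your treatment of the convergence statement and of the $k=4r+4$ case is complete and matches the paper.

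The gap is in your final paragraph. The cancellation of $\log\varepsilon$ contributions that you plan to establish does not occur: near $s=0$ one has $(I_0K_0)^{2r+1}s^{2j}=O\bigl(s^{2j}(\log s)^{2r+1}\bigr)$, which is absolutely integrable on $(0,1]$ for $j\geq 0$, so $\int_\varepsilon^1(I_0K_0)^{2r+1}s^{2j}\,\mathrm{d}s$ tends to a finite limit and produces no $\log\varepsilon$ term at all, while the counter-term $-\tfrac{2\gamma_{k,j-r}}{2^{k-2j}}\int_\varepsilon^t\tfrac{\mathrm{d}s}{s}$ contributes an uncancelled $+\tfrac{2\gamma_{k,j-r}}{2^{k-2j}}\log\varepsilon\to-\infty$ (note $\gamma_{k,j-r}>0$ since $2j\geq k'=2r$). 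Read literally, the displayed double limit therefore diverges, and no amount of tracking singular pieces will repair this. The difficulty is inherited from the source: the paper's own proof declares the termwise integral of the at-infinity asymptotic series over $[\varepsilon,t]$ to be ``the divergent part as $t\to\infty$, $\varepsilon\to 0^+$'', thereby misattributing a divergence to the endpoint $0$ where none exists. The definition should be read with the logarithmic counter-term anchored at a finite basepoint, i.e.\ with $\int_\varepsilon^t\tfrac{\mathrm{d}s}{s}$ replaced by $\log t$ and the remaining integral taken over $(0,t]$; with that reading your argument (and the paper's) closes, and it is the reading consistent with the later applications, where only combinations such as $\mathrm{IKM}^{\mathrm{reg}}_k(k/2,2j)-\gamma_{k,j-r}2^{2j-2r}\,\mathrm{IKM}^{\mathrm{reg}}_k(k/2,2r)$ appear, in which the formal $\log\varepsilon$ terms cancel. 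You should either adopt this corrected definition or prove existence of the $t\to\infty$ limit for fixed $\varepsilon$ and record the exact (non-vanishing) $\log\varepsilon$ dependence, rather than asserting a cancellation that is not there.
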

\begin{proof}
Near $0$, by \cite[\S 10.30(i)]{NIST:DLMF} we have the asymptotics
\begin{align}
I_0(t)&=1+O(t^{2});\label{expansion_of_I_0_at_0}\\
K_0(t)&=-\left(\gamma +\log\frac{t}{2}\right)+O(t^{2}\log t),\label{expansion_of_K_0_at_0}
\end{align}
where $\gamma$ is the Euler constant. Then, the integral $\int_{0}^{1}I_{0}^{a}(t)K_{0}^{k-a}(t)t^{b}{\mathrm{d}}t$ converges for all $0\leq a\leq\frac{k}{2}$ and any $b\geq 0$.

Near $\infty$, from \eqref{expansion_I_0_at_inf} and \eqref{expansion_K_0_at_inf}, when $0\leq a\leq k^{\prime}$, $I_{0}^{a}(t)K_{0}^{k-a}(t)$ decays exponentially and hence the integral $\int_{1}^{\infty}I_{0}^{a}(t)K_{0}^{k-a}(t)t^{b}{\mathrm{d}}t$ converges.

When $k$ is even and $a=\frac{k}{2}$, near $\infty$, by \eqref{expansion_of_(-AB)^half}, we have the asymptotic expansion
\[
\left(I_{0}K_{0}\right)^{k/2}t^{2j}=\frac{1}{2^{k/2}}\sum_{n=0}^{\infty}\gamma_{k,n}4^{n}t^{-2n-k/2+2j}.
\]
Taking integration, we have
\[
\int_{\varepsilon}^{t}(I_{0}K_{0})^{k/2}s^{2j}\mathrm{d}s = \frac{1}{2^{k/2}}\sum_{n=0}^{\infty}\gamma_{k,n}4^{n}\int_{\varepsilon}^{t} s^{-2n-k/2+2j}\mathrm{d}s.
\]
Using the fact that $\int_{1}^{\infty}t^{\alpha}{\mathrm{d}}t$ converges if and only if $\alpha <-1$ and $\int_{0}^{1}t^{\alpha}{\mathrm{d}}t$ converges if and only if $\alpha >-1$, the divergent part of the integral
$
\int_{\varepsilon}^{t}\left(I_{0}K_{0}\right)^{k/2}s^{2j}{\mathrm{d}}s
$ as $t\rightarrow\infty$, $\varepsilon\rightarrow 0^{+}$ is
\begin{align}
\sum_{m=0}^{j-r-1}\frac{\gamma_{k,j-r-1-m}}{2^{k-2j+2m}}\frac{t^{2m+1}}{\left(2m+1\right)} & {\text{ if }}k=4r+4\label{Bessel_moments_singular_term_4r+4}\\
\frac{2\gamma_{k,j-r}}{2^{k-2j}}\int_{\varepsilon}^{t}\frac{\mathrm{d}s}{s}+\sum_{m=0}^{j-r-1}\frac{\gamma_{k,j-r-1-m}}{2^{k-2j+2m+1}}\frac{t^{2m+2}}{(2m+2)}& {\text{ if }}k=4r+2\label{Bessel_moments_singular_term_4r+2}
\end{align}
Hence, after subtracting the divergent part of the integral, we conclude that the limits $\mathrm{IKM}_{k}^{\mathrm{reg}}(\frac{k}{2},2j)$ exist. 
\end{proof}

\begin{remark}
For $a=\frac{k}{2}$ and $b\geq k^{\prime}$ with odd $b$, the integral $\int_{0}^{\infty}I_{0}^{a}(t)K_{0}^{k-a}(t)t^{b}{\mathrm{d}}t$ also diverges. We may similarly define the regularized Bessel moments in this case.
See \cite[Definitions 6.1 and 6.4]{MR4578001}.
\end{remark}

\subsection{Period pairing and compactly supported period pairing}
By \cite[Corollary 2.11]{MR4578001}, there exist the following two perfect pairings. The period pairing is defined to be
\[
\begin{tikzcd}
{H_{1}^{\mathrm{rd}}\big(\mathbb{G}_{\mathrm{m}},(\sqrt{z}\Sym^{k}{\mathrm{Kl}}_{2})^{\nabla}\big)_{\mathbb{C}}\times H^{1}_{\mathrm{dR}}\big(\mathbb{G}_{\mathrm{m}},\sqrt{z}\Sym^{k}{\mathrm{Kl}}_{2}\big)_{\mathbb{C}}} \arrow[rr, "{\left\langle\ ,\ \right\rangle_{\mathrm{per}}}"] &  & \mathbb{C}
\end{tikzcd}
\]
by
\[
\left\langle\sigma\otimes\frac{1}{\sqrt{z}}e_{0}^{b}e_{1}^{k-b},\omega\right\rangle_{\mathrm{per}}=\int_{\sigma}\frac{1}{\sqrt{z}}\left\langle e_{0}^{b}e_{1}^{k-b},\omega\right\rangle_{\mathrm{top}}.
\]
Here, the notation $V_{\mathbb{C}}$ means $V\otimes_{\mathbb{Q}}\mathbb{C}$. There is a one-form $\omega$ occurs in $\langle e_{0}^{b}e_{1}^{k-b},\omega\rangle_{\mathrm{top}}$. This topological pairing means $\langle e_{0}^{b}e_{1}^{k-b},f\rangle_{\mathrm{top}}\mathrm{d}z$ whenever $\omega = f\mathrm{d}z$. That is, we take the pairing $\langle\ ,\ \rangle_{\mathrm{top}}$ only on the coefficients. Note that the period pairing induces on the middle part Betti homology $H_{1}^{\mathrm{mid}}(\mathbb{G}_{\mathrm{m}},(\sqrt{z}\Sym^{k}{\mathrm{Kl}}_{2})^{\nabla})_{\mathbb{C}}$ and middle part de Rham cohomology $H^{1}_{\mathrm{mid}}(\mathbb{G}_{\mathrm{m}},\sqrt{z}\Sym^{k}{\mathrm{Kl}}_{2})_{\mathbb{C}}$ by the restriction:
\[
\begin{tikzcd}
{H_{1}^{\mathrm{mid}}\big(\mathbb{G}_{\mathrm{m}},(\sqrt{z}\Sym^{k}{\mathrm{Kl}}_{2})^{\nabla}\big)_{\mathbb{C}}\times H^{1}_{\mathrm{mid}}\big(\mathbb{G}_{\mathrm{m}},\sqrt{z}\Sym^{k}{\mathrm{Kl}}_{2}\big)_{\mathbb{C}}} \arrow[rr, "{\left\langle\ ,\ \right\rangle_{\mathrm{per}}}"] &  & \mathbb{C}.
\end{tikzcd}
\]

Moreover, the compactly supported period pairing is defined to be
\[
\begin{tikzcd}
{H^{1}_{\mathrm{dR,c}}\big(\mathbb{G}_{\mathrm{m}},\sqrt{z}\Sym^{k}{\mathrm{Kl}}_{2}\big)_{\mathbb{C}}\times H_{1}^{\mathrm{mod}}\big(\mathbb{G}_{\mathrm{m}},(\sqrt{z}\Sym^{k}{\mathrm{Kl}}_{2})^{\nabla}\big)_{\mathbb{C}}} \arrow[rr, "{\left\langle\ ,\ \right\rangle_{\mathrm{per,c}}}"] &  & \mathbb{C}
\end{tikzcd}
\]
by
\begin{align*}
\left\langle(\xi,\eta,\omega),\sigma\otimes\frac{1}{\sqrt{z}}e_{0}^{b}e_{1}^{k-b}\right\rangle_{\mathrm{per,c}}&=\int_{\sigma}\frac{1}{\sqrt{z}}\left\langle\omega , e_{0}^{b}e_{1}^{k-b}\right\rangle_{\mathrm{top}}
-\frac{1}{\sqrt{z}}\left\langle\eta, e_{0}^{b}e_{1}^{k-b}\right\rangle_{\mathrm{top}}+\frac{1}{\sqrt{z}}\left\langle\xi,e_{0}^{b}e_{1}^{k-b}\right\rangle_{\mathrm{top}}.
\end{align*}

\begin{remark}
Note that the order of homology and cohomology in these two pairing are different. This is because we want to write down the matrix expression of quadratic relation \eqref{quadratic_matrix_relation} preventing the transpose notation.
\end{remark}

\begin{proposition}\label{period_pairing}
The period pairing of the rapid decay cycle $\delta_{b}$ in \eqref{rapid_decay_cycle_delta_b} and the de Rham cohomology class $\omega_{k,j}$ in Definition \ref{de_Rham_basis_elements} is given by
\[
\left\langle \delta_{b},v_{0}^{k}z^{j}\frac{\mathrm{d}z}{z}\right\rangle_{\mathrm{per}}=(\pi\sqrt{-1})^{b}(-1)^{k-b}2^{k-2j}{\mathrm{IKM}}_{k}(b,2j)
\]
for $0\leq b\leq k^{\prime}$ and $0\leq j\leq k^{\prime}$.
\end{proposition}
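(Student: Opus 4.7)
The plan is to pull everything back to the $t$-variable via $z = t^2/4$ (so that $A_0 = -2I_0(t)$, $B_0 = 2K_0(t)$), match the resulting integrand with the Bessel-moment integrand $I_0^b K_0^{k-b} t^{2j}\,dt$, and then assemble the three pieces of $\delta_b$ into the integral from $0$ to $\infty$.

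First invert the relation between the algebraic and horizontal frames. From the definitions of $e_0, e_1$ together with the Wronskian $A_0 B_1 - A_1 B_0 = 2$, one finds $v_0 = B_0 e_0 - \pi\sqrt{-1}\, A_0 e_1$, hence by the multinomial theorem in $\Sym^k\mathrm{Kl}_2$,
\[
v_0^k = \sum_{a=0}^k \binom{k}{a} B_0^a (-\pi\sqrt{-1})^{k-a} A_0^{k-a}\, e_0^a e_1^{k-a}.
\]
Applying the topological pairing on $(\sqrt{z}\Sym^k\mathrm{Kl}_2)^\nabla$ from Section \ref{algebraic_topological_pairing}, only the $a = k-b$ term survives, and using $(-\pi\sqrt{-1})^b(-1)^{k-b} = (-1)^k(\pi\sqrt{-1})^b$ this collapses to
\[
\Big\langle \tfrac{1}{\sqrt{z}} e_0^b e_1^{k-b},\, v_0^k z^j \tfrac{dz}{z}\Big\rangle_{\mathrm{top}} = (-1)^k (\pi\sqrt{-1})^b A_0^b B_0^{k-b}\,\sqrt{z}\, z^{j-1}\,dz.
\]
Substituting $z = t^2/4$ yields $\sqrt{z}\, z^{j-1}\,dz = t^{2j}/4^j\,dt$ and $A_0^b B_0^{k-b} = 2^k(-1)^b I_0^b K_0^{k-b}$, so the integrand becomes $(\pi\sqrt{-1})^b (-1)^{k-b} 2^{k-2j}\, I_0^b K_0^{k-b}\, t^{2j}\,dt$.

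The chain $\sigma_+ \subset \mathbb{G}_{m,z}$, which is $z\in [1,\infty)$, pulls back to $t\in[2,\infty)$ and directly contributes $(\pi\sqrt{-1})^b(-1)^{k-b}2^{k-2j}\int_2^\infty I_0^b K_0^{k-b} t^{2j}\,dt$. For the circular parts, decompose $\sigma_0^{2n}\otimes s = \sum_{m=0}^{2n-1}\sigma_0\otimes T^m s$, where $T^m s = (-1)^m \tfrac{1}{\sqrt{z}} e_0^b (e_1+m e_0)^{k-b}$ by the monodromy $e_1\mapsto e_1+e_0$, $\sqrt{z}\mapsto -\sqrt{z}$. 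Repeating the pairing computation for each $T^m s$ multiplies the integrand by $(-1)^m$ and replaces $K_0$ by $K_0 - m\pi\sqrt{-1}I_0$, in agreement with $K_0(e^{i\pi}t)=K_0(t) - \pi\sqrt{-1}I_0(t)$. Expanding $(K_0 - m\pi\sqrt{-1}I_0)^{k-b}$ binomially produces the genuine integrand $I_0^b K_0^{k-b}$ together with unwanted ``cross-terms'' $I_0^{b+i}K_0^{k-b-i}$ for $i\ge 1$, whose coefficients are polynomials $(-m\pi\sqrt{-1})^i\binom{k-b}{i}$ in $m$. The defining conditions $\sum_{n=1}^{k-b}d_{k-b}(n)(2n)^\ell = -\tfrac{1}{2}$ for $\ell = 1,\ldots,k-b$, together with the $-\tfrac{1}{2}\sigma_0$ term, are tuned to cancel exactly these $i\ge 1$ contributions after summation; what remains is the genuine integrand over the missing interval $t\in[0,2]$, giving $(\pi\sqrt{-1})^b(-1)^{k-b}2^{k-2j}\int_0^2 I_0^b K_0^{k-b} t^{2j}\,dt$. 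Summing the two pieces completes the proof.

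The main technical difficulty is the circle step. One must interpret $\sigma_0^{2n}$ as an unfolding into $2n$ alternating upper/lower half-arcs on $|t|=2$, track the $(-1)^m$-weighted contributions from the $\sqrt{z}$-twist, and apply the polynomial identities on $d_{k-b}(n)$ to kill the unwanted cross-terms, leaving a real integral over $[0,2]$. The parallel computation in the untwisted case is carried out in \cite{MR4578001}; the new ingredient here is simply the extra sign $(-1)^m$ from $\sqrt{z}\mapsto -\sqrt{z}$ under monodromy, which is why the rapid-decay construction pairs naturally with the even-power conditions summed over $\sigma_0^{2n}$.
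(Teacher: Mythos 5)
Your frame conversion, the resulting integrand $(\pi\sqrt{-1})^{b}(-1)^{k-b}2^{k-2j}I_0^bK_0^{k-b}t^{2j}\,\mathrm{d}t$, and the contribution of $\sigma_+$ all check out against the paper. Where you diverge is in the treatment of the circular pieces, and here the two arguments are genuinely different. The paper does not evaluate the circle contributions at all: it replaces $\sigma_0,\sigma_+$ by their radius-$\varepsilon$ versions (legitimate by homotopy invariance of the pairing on cycles), observes that the ray part converges to the full moment $\int_0^\infty$, and kills every circle term by the crude bound $\big|\int_{\varepsilon\sigma_0^{p}}\big|\le \varepsilon^{j+1/2}\int_0^{2\pi p}|\gamma+\log\sqrt{\varepsilon}+\tfrac{1}{2}\sqrt{-1}\theta|^{k-b}\,\mathrm{d}\theta\to 0$. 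Your route instead keeps the circles at radius $1$ and must show their total contribution equals the missing piece $\int_0^2$. This does work, and I verified it in general, but the mechanism is more intricate than your description suggests: after unfolding $\sigma_0^{2n}$ into half-arcs on $|t|=2$, the identities $\sum_n d_{k-b}(n)(2n)^{\ell}=-\tfrac12$ only reduce the coefficient of the $\ell$-th cross-term half-arc integral to $-\tfrac12\binom{k-b}{\ell}(-\pi\sqrt{-1})^{\ell}E_{\ell}$ (via $\sum_n d_{k-b}(n)T_\ell(2n)=-\tfrac12 E_\ell$, exactly as in Proposition \ref{Betti_intersection_pairing}), which does \emph{not} vanish for odd $\ell$. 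One must then deform each half-arc onto the segment $[0,2]$ traversed twice (once on each edge of the cut, picking up a further binomial expansion of $(K_0-\pi\sqrt{-1}I_0)^{k-b-\ell}$), and only after recollecting terms and invoking the Euler-number identity \eqref{Euler_number_relation} does everything except the genuine $\int_0^2 I_0^bK_0^{k-b}t^{2j}\,\mathrm{d}t$ cancel. So your conclusion for the circle step is correct, but the asserted cancellation is the entire content of the proof and is not carried out; as written, the claim that the $d$-identities alone "cancel exactly these $i\ge 1$ contributions" is not accurate. The trade-off is clear: your approach yields an exact, radius-$1$ evaluation at the cost of redoing (a refinement of) the Euler-number combinatorics already used for the Betti pairing, while the paper's $\varepsilon$-shrinking argument needs only the integrability of $z^{j-1/2}(\log z)^{k-b}$ at $0$ and avoids the combinatorics entirely.
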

\begin{proof}
Denote $\varepsilon \sigma_{0}$ to be the scaling of the chain $\sigma_{0}$, that is, $\varepsilon\sigma_{0}$ is a chain of a circle of radius $\varepsilon$. Similarly, denote $\varepsilon\sigma_{+}$ to be the chain of the ray $[\varepsilon,\infty)$. Then, since $\sigma_{0}$ and $\sigma_{+}$ are homotopy to $\varepsilon\sigma_{0}$ and $\sigma_{+}$ respectively, we may replace $\sigma_{0}$ and $\sigma_{+}$ in $\delta_{b}$ by $\varepsilon\sigma_{0}$ and $\varepsilon\sigma_{+}$ respectively in the following computation. We compute
\begin{align*}
\left\langle \delta_{b},v_{0}^{k}z^{j}\frac{\mathrm{d}z}{z}\right\rangle_{\mathrm{per}}&=\int_{\varepsilon\left(\sigma_{+}-\frac{1}{2}\sigma_{0}+\sum_{n=1}^{k-b}d_{k-b}(n)\sigma_{0}^{2n}\right)}\frac{1}{\sqrt{z}}\left\langle e_{0}^{b}e_{1}^{k-b},v_{0}^{k}z^{j}\frac{\mathrm{d}z}{z}\right\rangle_{\mathrm{top}}\\
&=(-1)^{k-b}(\pi\sqrt{-1})^{b}\int_{\varepsilon\left(\sigma_{+}-\frac{1}{2}\sigma_{0}+\sum_{n=1}^{k-b}d_{k-b}(n)\sigma_{0}^{2n}\right)}\sqrt{z}(-A_{0})^{b}B_{0}^{k-b}z^{j-1}\mathrm{d}z\\
&=(\pi\sqrt{-1})^{b}(-1)^{k-b}2^{k}\int_{\varepsilon\left(\sigma_{+}-\frac{1}{2}\sigma_{0}+\sum_{n=1}^{k-b}d_{k-b}(n)\sigma_{0}^{2n}\right)}z^{j-1}\sqrt{z}I_{0}(2\sqrt{z})^{b}K_{0}(2\sqrt{z})^{k-b}\mathrm{d}z\\
&=(\pi\sqrt{-1})^{b}(-1)^{k-b}2^{k}\int_{\varepsilon}^{\infty}z^{j-1}\sqrt{z}I_{0}(2\sqrt{z})^{b}K_{0}(2\sqrt{z})^{k-b}\mathrm{d}z\\
&\hspace{20pt}-\frac{1}{2}(\pi\sqrt{-1})^{b}(-1)^{k-b}2^{k}\int_{\varepsilon\sigma_{0}}z^{j-1}\sqrt{z}I_{0}(2\sqrt{z})^{b}K_{0}(2\sqrt{z})^{k-b}\mathrm{d}z\\
&\hspace{20pt}+\sum_{n=1}^{k-b}d_{k-b}(n)(\pi\sqrt{-1})^{b}(-1)^{k-b}2^{k}\int_{\varepsilon\sigma_{0}^{2n}}z^{j-1}\sqrt{z}I_{0}(2\sqrt{z})^{b}K_{0}(2\sqrt{z})^{k-b}\mathrm{d}z.
\end{align*}

Changing the coordinate by $z=\frac{t^{2}}{4}$, the first term becomes
\[
(\pi\sqrt{-1})^{b}(-1)^{k-b}2^{k-2j}\int_{2\sqrt{\varepsilon}}^{\infty}I_0(t)^{b}K_0(t)^{k-b}t^{2j}\mathrm{d}t.
\]
When $\varepsilon\rightarrow 0^{+}$, this term tends to $(\pi\sqrt{-1})^{b}(-1)^{k-b}2^{k-2j}{\mathrm{IKM}}_{k}(b,2j)$. 

For the other two terms, $\int_{\varepsilon\sigma_{0}^{p}}z^{j-1}\sqrt{z}I_{0}(2\sqrt{z})^{b}K_{0}(2\sqrt{z})^{k-b}\mathrm{d}z$ tends to zero as $\varepsilon\rightarrow 0^{+}$ for the following reason. As $s\rightarrow 0^{+}$, we have the asymptotic expansions \eqref{expansion_of_I_0_at_0} and \eqref{expansion_of_K_0_at_0}. Then, as $\varepsilon\rightarrow 0^{+}$ for all $j\geq 0$, we have the estimate
\begin{align*}
\left|\int_{\varepsilon\sigma_{0}^{p}}z^{j-1}\sqrt{z}I_{0}(2\sqrt{z})^{b}K_{0}(2\sqrt{z})^{k-b}\mathrm{d}z\right|&\leq \int_{\varepsilon\sigma_{0}^{p}}\left|z^{j-1}\sqrt{z}I_{0}(2\sqrt{z})^{b}K_{0}(2\sqrt{z})^{k-b}\right|\left|\mathrm{d}z\right|\\
&\leq \int_{0}^{2\pi p}\varepsilon^{j-1}\sqrt{\varepsilon}\left|I_{0}\left(2\sqrt{\varepsilon e^{i\theta}}\right)^{b}K_{0}\left(2\sqrt{\varepsilon e^{i\theta}}\right)^{k-b}\right|\varepsilon{\mathrm{d}}\theta\\
&\leq \varepsilon^{j}\sqrt{\varepsilon}\int_{0}^{2\pi p}\left|\gamma + \log\sqrt{\varepsilon e^{i\theta}}\right|^{k-b}{\mathrm{d}}\theta\\
&=\varepsilon^{j}\sqrt{\varepsilon}\int_{0}^{2\pi p}\left|\gamma + \log\sqrt{\varepsilon }+\frac{1}{2}i\theta\right|^{k-b}{\mathrm{d}}\theta\rightarrow 0.
\qedhere
\end{align*}
\end{proof}

\begin{proposition}\label{compact_support_period}
The compactly supported period pairing of the compactly supported de Rham cohomology $\widetilde{\omega}_{k,j}$ in Definition \ref{compact_support_de_rham_bisis_elements} and moderate decay cycle $\gamma_{a}$ in \eqref{moderate_decay_cycle_gamma_a} is given by
\[
\left\langle\widetilde{\omega}_{k,j},\gamma_{a}\right\rangle_{\mathrm{per,c}}=2^{k-2j}(-1)^{k-a}(\pi\sqrt{-1})^{a}\cdot{\mathrm{IKM}},
\]
where $0\leq a\leq \lfloor k/2\rfloor, 0\leq j\leq k'$
with $j\neq r$ if $k\equiv 2\bmod{4}$,
and
\[ {\mathrm{IKM}}=\begin{cases}
{\mathrm{IKM}}^{\text{reg}}_{k}(a,2j)
& \text{if $4\mid k, a=k/2, r+1\leq j\leq  k^{\prime}$}, \\
{\mathrm{IKM}}_{k}(a,2j)-\gamma_{k,j-k'/2}2^{2j-k'}{\mathrm{IKM}}_{k}(a,k')
& \text{if $4\mid (k+2), 0\leq a\leq k', r+1\leq j\leq k'$}, \\
{\mathrm{IKM}}^{\text{reg}}_{k}(a,2j)-\gamma_{k,j-k'/2}2^{2j-k'}{\mathrm{IKM}}^{\text{reg}}_{k}(a,k')
& \text{if $4\mid (k+2), a=k'+1, r+1\leq j\leq k'$}, \\
{\mathrm{IKM}}_{k}(a,2j)
& \text{otherwise}.
\end{cases}\]
Moreover, when $k=4r+2$, we have
\[
\left\langle\widehat{m}_{2r+1},\gamma_{a}\right\rangle_{\mathrm{per,c}}=\delta_{a,2r+1}(\pi\sqrt{-1})^{a}2^{k}\frac{1}{\binom{k}{k/2}}.
\]

\end{proposition}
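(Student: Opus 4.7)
The plan is to evaluate the three pieces in the definition of $\langle(\xi_j,\eta_j,\omega_{k,j}),\gamma_a\rangle_{\mathrm{per,c}}$ separately: the main integral $\int_{\mathbb{R}_+}\frac{1}{\sqrt z}\langle\omega_{k,j},e_0^a e_1^{k-a}\rangle_{\mathrm{top}}$, the boundary contribution $-\frac{1}{\sqrt z}\langle\eta_j,e_0^a e_1^{k-a}\rangle_{\mathrm{top}}$ at $\infty$, and the boundary contribution $\frac{1}{\sqrt z}\langle\xi_j,e_0^a e_1^{k-a}\rangle_{\mathrm{top}}$ at $0$. The main integrand is computed exactly as in the proof of Proposition \ref{period_pairing}: expanding $v_0=B_0 e_0-\pi\sqrt{-1}A_0 e_1$ in the horizontal frame, using the topological pairing formula from Section \ref{algebraic_topological_pairing} leaves a single non-vanishing term, and applying $A_0=-2I_0(2\sqrt z)$, $B_0=2K_0(2\sqrt z)$ followed by the substitution $z=t^2/4$ converts the integrand into $2^{k-2j}(-1)^{k-a}(\pi\sqrt{-1})^a I_0(t)^a K_0(t)^{k-a}t^{2j}\,dt$.

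In the regime where $\mathrm{IKM}_k(a,2j)$ converges (the ``otherwise'' case), this integral is exactly the claimed expression, and one must verify that the boundary contributions vanish. At $z=0$, $\xi_j\in\bigoplus_a\mathbb{Q}\llbracket z\rrbracket v_0^a v_1^{k-a}$, so $\langle\xi_j,e_0^a e_1^{k-a}\rangle_{\mathrm{top}}$ equals $z$ times a combination of power series and of the functions $A_i,B_i$ (which have at most logarithmic singularities at $0$); dividing by $\sqrt z$ gives a quantity bounded by $\sqrt z\cdot(\log z)^{k}$, which tends to $0$. At $z=\infty$, the explicit components $\eta_{j,a}$ constructed in the proof of Lemma \ref{lem_formal_sol} carry the exponential factors $e^{-2(k-2a)/\sqrt w}$; these cancel precisely against the exponentials hidden in the horizontal expansion of $e_0^a e_1^{k-a}$, and what remains is a combination of positive powers of $w$ (this is the same mechanism used in the proof of Lemma \ref{lem_formal_sol} to show $\eta_j\in\bigoplus_a\mathbb{Q}\llbracket w\rrbracket v_0^a v_1^{k-a}$), so the limit at $\infty$ is zero.

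For the regularized ranges, the main integral diverges at $\infty$ but the $\eta_j$ boundary supplies precisely the missing divergent asymptotic. For $k=4r+4$, $a=k/2$, $j\geq r+1$, the divergent tail of $\frac{1}{\sqrt z}\langle\eta_j,e_0^{k/2}e_1^{k/2}\rangle_{\mathrm{top}}$ is governed by the coefficients $\gamma_{k,n}$, and subtracting it yields exactly the expression defining $\mathrm{IKM}^{\mathrm{reg}}_k(k/2,2j)$ in Lemma \ref{regularized_lemma} through \eqref{Bessel_moments_singular_term_4r+4}. For $k=4r+2$, the form $\omega_{k,j}$ itself carries the correction $-\gamma_{k,j-r}v_0^k z^r\frac{dz}{z}$; its integral contributes $-\gamma_{k,j-r}\cdot 2^{k-k'}(-1)^{k-a}(\pi\sqrt{-1})^a\mathrm{IKM}_k(a,k')$ (using $2r=k'$), which after factoring out $2^{k-2j}$ produces the coefficient $\gamma_{k,j-r}2^{2j-k'}$ inside the bracket of the proposition; for $a=k'+1=k/2$, both summands become regularized via the asymptotic supplied by $\eta_j$, matching \eqref{Bessel_moments_singular_term_4r+2}. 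Finally, for $\widehat{m}_{2r+1}=(0,2^k(e_0\overline{e}_1)^{2r+1}/\sqrt z,0)$ the main integral and the $\xi$-boundary vanish identically; using $\overline{e}_1=\pi\sqrt{-1}e_1$ together with the horizontal pairing value $\langle\frac{1}{\sqrt z}e_0^{2r+1}e_1^{2r+1},\frac{1}{\sqrt z}e_0^a e_1^{k-a}\rangle_{\mathrm{top}}=-\delta_{a,2r+1}/\binom{k}{k/2}$ reduces the $\eta$-boundary to the stated constant. The main obstacle is the asymptotic matching in the regularized cases: one must check that the divergent part of $\frac{1}{\sqrt z}\langle\eta_j,e_0^a e_1^{k-a}\rangle_{\mathrm{top}}$ at $\infty$ agrees term-by-term with the divergent part of the Bessel integral, which is precisely the content of the explicit computation of $\eta_j$ in Lemma \ref{lem_formal_sol}.
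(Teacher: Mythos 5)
Your proposal follows the same route as the paper: split the pairing into the main integral over $\mathbb{R}_{+}$, the $\eta_{j}$-boundary at $\infty$ and the $\xi_{j}$-boundary at $0$; convert the main integrand to $2^{k-2j}I_{0}^{a}K_{0}^{k-a}t^{2j}$ via $z=t^{2}/4$; show the $\xi$-term dies like $\sqrt{z}(\log z)^{k}$; let the $\eta$-term either vanish or supply exactly the counterterms \eqref{Bessel_moments_singular_term_4r+4}, \eqref{Bessel_moments_singular_term_4r+2} defining the regularized moments; track the extra $-\gamma_{k,j-r}v_{0}^{k}z^{r}\frac{\mathrm{d}z}{z}$ summand of $\omega_{k,j}$ when $k=4r+2$; and evaluate $\langle\widehat{m}_{2r+1},\gamma_{a}\rangle$ directly from the topological pairing. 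All of that matches the paper's proof, including the numerical factors.

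One step is justified incorrectly, however. For the vanishing of $-\frac{1}{\sqrt{z}}\langle\eta_{j},e_{0}^{a}e_{1}^{k-a}\rangle_{\mathrm{top}}$ at $\infty$ when $a<k/2$, you invoke a cancellation of the exponential in $\eta_{j,a}$ against ``exponentials hidden in the horizontal expansion of $e_{0}^{a}e_{1}^{k-a}$,'' leaving positive powers of $w$. But the topological pairing of horizontal sections is a \emph{constant} (Section \ref{algebraic_topological_pairing}); writing $\eta_{j}$ in the frame $\frac{1}{\sqrt{z}}e_{0}^{k-c}\overline{e}_{1}^{c}$, only the single coefficient $\eta_{j,a}$ survives the pairing, and nothing cancels its exponential. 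Moreover, if such a cancellation did occur you would be left with $w^{k/4-j}$, which is a \emph{negative} power of $w$ for $j>k/4$, so the argument as written would not give vanishing. The correct (and the paper's) reason is simply that $\eta_{j,a}\sim\frac{\sqrt{\pi}^{k-2a}}{k-2a}e^{-(k-2a)s}w^{k/4-j}G_{j,a}$ by \eqref{expansion_eta_i_a}, and the surviving factor $e^{-(k-2a)s}$ with $k>2a$ dominates any power of $w$. The cancellation-in-the-$v$-basis mechanism you cite belongs to the proof of Lemma \ref{lem_formal_sol} (where it shows $\eta_{j}$ is an honest formal solution at $\infty$), not to the evaluation of the boundary term. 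With that one justification replaced, the proof is correct and coincides with the paper's.
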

\begin{proof}
When $k\equiv 1,3\bmod{4}$. We compute the compactly supported period pairing
\begin{align*}
\left\langle\left(\xi_{j},\eta_{j},\omega_{k,j}\right), \gamma_{a}\right\rangle_{\mathrm{per,c}}&=\int_{\mathbb{R}_{+}}\frac{1}{\sqrt{z}}\left\langle v_{0}^{k},e_{0}^{a}e_{1}^{k-a}\right\rangle_{\mathrm{top}} z^{j}\frac{{\mathrm{d}}z}{z}-\frac{1}{\sqrt{z}}\left\langle -\sum_{c=0}^{k}\binom{k}{c}\frac{\eta_{j,c}}{\sqrt{z}}e_{0}^{k-c}\overline{e}_{1}^{c},e_{0}^{a}e_{1}^{k-a}\right\rangle_{\mathrm{top}}\\
&\hspace{20pt}+\frac{1}{\sqrt{z}}\left\langle \sum_{c=0}^{k}\xi_{j,c}v_{0}^{c}v_{1}^{k-c} ,e_{0}^{a}e_{1}^{k-a}\right\rangle_{\mathrm{top}}\\
&=\int_{\mathbb{R}_{+}}\frac{1}{\sqrt{z}}\left\langle\sum_{c=0}^{k}\binom{k}{c}\left(-A_{0}(z)\right)^{c}B_{0}(z)^{k-c}e_{0}^{k-c}\overline{e}_{1}^{c},e_{0}^{a}e_{1}^{k-a}\right\rangle_{\mathrm{top}}z^{j-1}{\mathrm{d}}z\\
&\hspace{20pt}+(\pi\sqrt{-1})^{a}(-1)^{a}\eta_{j,a}+\frac{1}{\sqrt{z}}\left\langle\sum_{c=0}^{k}\xi_{j,c}v_{0}^{c}v_{1}^{k-c},e_{0}^{a}e_{1}^{k-a}\right\rangle_{\mathrm{top}}\\
&=(-1)^{a}(\pi\sqrt{-1})^{a}\int_{\mathbb{R}_{+}}\left(-A_{0}(z)\right)^{a}B_{0}(z)^{k-a}\sqrt{z}\,z^{j-1}{\mathrm{d}}z\\
&\hspace{20pt}+(-1)^{a}(\pi\sqrt{-1})^{a}\eta_{j,a}+\frac{1}{\sqrt{z}}\left\langle\sum_{c=0}^{k}\xi_{j,c}v_{0}^{c}v_{1}^{k-c},e_{0}^{a}e_{1}^{k-a}\right\rangle_{\mathrm{top}}\\
&=(-1)^{a}(\pi\sqrt{-1})^{a}2^{k-2j}\int_{\mathbb{R}_{+}}I_0(s)^{a}K_0(s)^{k-a}s^{2j}{\mathrm{d}}s+(-1)^{a}(\pi\sqrt{-1})^{a}\eta_{j,a}\\
&\hspace{20pt}+\frac{2}{s}\left\langle\sum_{c=0}^{k}\xi_{j,c}v_{0}^{c}v_{1}^{k-c},e_{0}^{a}e_{1}^{k-a}\right\rangle_{\mathrm{top}}
\end{align*}
where the last equality follows by the change of variable $z=\frac{s^{2}}{4}$. The first term converges by Lemma \ref{regularized_lemma}. Since $k>2a$, by \eqref{expansion_eta_i_a}, the second term tends to zero as $s\rightarrow\infty$. The third term tends to zero as $s\rightarrow 0$ since all $\xi_{j,c}\in\mathbb{Q}\llbracket\frac{s^{2}}{4}\rrbracket$ and the topological pairing gives a factor $\frac{s^{2}}{4}$.

When $k\equiv 0\bmod{4}$, write $k=4r+4$. We compute the compactly supported period pairing
\begin{align*}
\left\langle\left(\xi_{j},\eta_{j},\omega_{k,j}\right), \gamma_{a}\right\rangle_{\mathrm{per,c}}&=\int_{\mathbb{R}_{+}}\frac{1}{\sqrt{z}}\left\langle v_{0}^{k},e_{0}^{a}e_{1}^{k-a}\right\rangle_{\mathrm{top}} z^{j}\frac{{\mathrm{d}}z}{z}-\frac{1}{\sqrt{z}}\left\langle -\sum_{c=0}^{k}\binom{k}{c}\frac{\eta_{j,c}}{\sqrt{z}}e_{0}^{k-c}\overline{e}_{1}^{c},e_{0}^{a}e_{1}^{k-a}\right\rangle_{\mathrm{top}}\\
&\hspace{20pt}+\frac{1}{\sqrt{z}}\left\langle\sum_{c=0}^{k}\xi_{j,c}v_{0}^{c}v_{1}^{k-c},e_{0}^{a}e_{1}^{k-a}\right\rangle_{\mathrm{top}}\\
&=2^{k-2j}(-1)^{a}(\pi\sqrt{-1})^{a}\int_{\mathbb{R}_{+}}I_0(s)^{a}K_0(s)^{k-a}s^{2j}{\mathrm{d}}s\\
&\hspace{20pt}+(-1)^{a}(\pi\sqrt{-1})^{a}\eta_{j,a}
+\frac{2}{s}\left\langle\sum_{c=0}^{k}\xi_{j,c}v_{0}^{c}v_{1}^{k-c},e_{0}^{a}e_{1}^{k-a}\right\rangle_{\mathrm{top}}
\end{align*}
where the last equality is the change of variable $z=\frac{s^{2}}{4}$. The third term tends to zero as $s\rightarrow 0$ since all $\xi_{j,c}\in\mathbb{Q}\llbracket\frac{s^{2}}{4}\rrbracket$ and the topological pairing gives a factor $\frac{s^{2}}{4}$. By the same argument above, when $a=0,1,\cdots,\frac{k-2}{2}=k^{\prime}$, that is, $k>2a$, we have that the first term converges and the second term tends to zero as $s\rightarrow\infty$.

Now, we turn to analyze the case that $a=\frac{k}{2}$. The pairing becomes
\[
\left\langle\left(\xi_{j},\eta_{j},\omega_{k,j}\right), \gamma_{a}\right\rangle_{\mathrm{per,c}}=2^{k-2j}(-\pi\sqrt{-1})^{a}\int_{0}^{s}I_0(t)^{a}K_0(t)^{k-a}s^{2j}{\mathrm{d}}s+(-\pi\sqrt{-1})^{a}\eta_{j,2r+2}
\]
This term converges as $s\rightarrow\infty$ for the following reason:

The singular part of the integral $\left(I_{0}K_{0}\right)^{2r+2}s^{2j}$ is given by \eqref{Bessel_moments_singular_term_4r+4} and $\eta_{j,2r+2}$ has expansion
\[
\eta_{j,2r+2}\sim\frac{2^{2r-2j+1}}{r-j+1/2}s^{2j-2r-1}\cdot G_{i}\sim 2^{2r-2j+1}\sum_{n=0}^{\infty}\frac{2^{2n}\gamma_{k,n}}{r-j+1/2+n}s^{2j-2r-1-2n}
\]
Thus, both of the singular terms cancel.

When $k\equiv 2\bmod{4}$, write $k=4r+2$. Recall from Definition \ref{compact_support_de_rham_bisis_elements}
the elements $\widetilde{\omega}_{k,j}$ and $\widehat{m}_{2r+1}$
in $H^{1}_{{\mathrm{dR,c}}}\big(\mathbb{G}_{\mathrm{m}},\sqrt{z}\Sym^{k}{\mathrm{Kl}}_{2}\big)$.
If we use the convention that $\gamma_{k,p}=0$ whenever $p<0$, we rewrite
\begin{align*}
\widetilde{\omega}_{k,i}&=\left(\xi_{i},\eta_{i},\omega_{k,i}\right)\\
&=\left(\sum_{a=0}^{k}\xi_{i,a}(z)v_{0}^{a}v_{1}^{k-a}-\gamma_{k,i-r}\sum_{a=0}^{k}\xi_{r,a}(z)v_{0}^{a}v_{1}^{k-a},\right.\\
&\hspace{20pt}-\sum_{\substack{a=0\\a\neq k/2}}^{k}\binom{k}{a}\frac{\eta_{i,a}-\gamma_{k,i-r}\eta_{r,a}}{\sqrt{z}}e_{0}^{k-a}\overline{e}_{1}^{a}-\binom{k}{k/2}\frac{\eta_{i,2r+1}}{\sqrt{z}}(e_{0}\overline{e}_{1})^{2r+1},\\
&\hspace{20pt}\left.v_{0}^{k}z^{i}\frac{\mathrm{d}z}{z}-\gamma_{k,i-r}v_{0}^{k}z^{r}\frac{\mathrm{d}z}{z}\right)
\end{align*}

In the pairing $\left\langle\widetilde{\omega}_{k,j},\gamma_{a}\right\rangle_{\mathrm{per,c}}$, the third term \[\frac{2}{s}\left\langle\sum_{c=0}^{k}\xi_{j,c}v_{0}^{c}v_{1}^{k-c},e_{0}^{a}e_{1}^{k-a}\right\rangle_{\mathrm{top}}\]
tends to zero as $s\rightarrow 0$ since all $\xi_{j,c}\in\mathbb{C}\llbracket\frac{s^{2}}{4}\rrbracket$ and the topological pairing gives a factor $\frac{s^{2}}{4}$.
The other two terms are equal to
\begin{align*}
&\left(\int_{\mathbb{R}_{+}}\frac{1}{\sqrt{z}}\left\langle v_{0}^{k},e_{0}^{a}e_{1}^{k-a}\right\rangle_{\mathrm{top}} z^{j}\frac{{\mathrm{d}}z}{z}-\gamma_{k,j-r}\int_{\mathbb{R}_{+}}\frac{1}{\sqrt{z}}\left\langle v_{0}^{k},e_{0}^{a}e_{1}^{k-a}\right\rangle_{\mathrm{top}} z^{r}\frac{{\mathrm{d}}z}{z}\right)\\
&-\frac{1}{\sqrt{z}}\left\langle -\sum_{\substack{b=0\\b\neq k/2}}^{k}\binom{k}{b}\frac{\eta_{j,b}-\gamma_{k,j-r}\eta_{r,b}}{\sqrt{z}}e_{0}^{k-b}\overline{e}_{1}^{b}-\binom{k}{k/2}\frac{\eta_{j,2r+1}}{\sqrt{z}}(e_{0}\overline{e}_{1})^{2r+1}, e_{0}^{a}e_{1}^{k-a}\right\rangle_{\mathrm{top}}\\
&=(-1)^{a}(\pi\sqrt{-1})^{a}\left(2^{k-2j}\int_{\mathbb{R}_{+}}I_0(s)^{a}K_0(s)^{k-a}s^{2j}{\mathrm{d}}s-2^{k-2r}\gamma_{k,j-r}\int_{\mathbb{R}_{+}}I_0(s)^{a}K_0(s)^{k-a}s^{2r}{\mathrm{d}}s\right)\\
&\hspace{20pt}+\frac{1}{z}\left\langle\sum_{\substack{b=0\\b\neq k/2}}^{k}\binom{k}{b}\left(\eta_{j,b}-\gamma_{k,j-r}\eta_{r,b}\right)e_{0}^{k-b}\overline{e}_{1}^{b}+\binom{k}{k/2}\eta_{j,2r+1}(e_{0}\overline{e}_{1})^{2r+1},e_{0}^{a}e_{1}^{k-a}\right\rangle_{\mathrm{top}}
\end{align*}

We analyze the convergence of these terms. When $2a<k$ or $j<r$, the integral $\int_{0}^{t}I_0(s)^{a}K_0(s)^{k-a}s^{2j}{\mathrm{d}}s$ converges as $t\rightarrow\infty$ by Lemma \ref{regularized_lemma}. The second term is equal to
\begin{align*}
&(-1)^{a}(\pi\sqrt{-1})^{a}\left(\eta_{j,a}-\gamma_{k,j-r}\eta_{r,a}\right).
\end{align*}
By the expansion of $\eta_{i,a}$:
\[
\eta_{i,a}\sim\frac{\sqrt{\pi}^{k-2a}}{k-2a}e^{-(k-2a)s}\left(\frac{4}{s^{2}}\right)^{k/4-i}\cdot G_{i,a},
\]
where $G_{i,a}\in 1+\frac{2}{s}\mathbb{Q}\llbracket\frac{2}{s}\rrbracket$, this term tends to $0$ as $s\rightarrow\infty$. 

When $a=\frac{k}{2}$ and $j\geq r$, the integral $\int_{0}^{t}I_0(s)^{a}K_0(s)^{k-a}s^{2j}{\mathrm{d}}s$ has the singular part \eqref{Bessel_moments_singular_term_4r+2}. The second term is equal to
\[ (-1)^{2r+1}(\pi\sqrt{-1})^{2r+1}\eta_{j,2r+1}
=(-1)^{2r+1}(\pi\sqrt{-1})^{2r+1}\frac{1}{r-j}\left(\frac{4}{s^{2}}\right)^{r-j}\cdot H_{i} \]
where $H_{i}\in 1+\frac{4}{s^{2}}\mathbb{Q}\llbracket\frac{4}{s^{2}}\rrbracket$. Thus, the singular part of this term is
\[
(-1)^{2r+1}(\pi\sqrt{-1})^{2r+1}\sum_{n=1}^{j-r}\frac{-\gamma_{k,j-r-n}}{n}\left(\frac{4}{s^{2}}\right)^{-n}.
\]
In consequence, the singular parts cancel.

Finally, for $a=0,1,\cdots,\frac{k}{2}$, we have
\begin{align*}
\left\langle\left(0,\frac{2^{k}}{\sqrt{z}}(e_{0}\overline{e}_{1})^{2r+1},0\right),\gamma_{a}\right\rangle_{\mathrm{per,c}}&=-\frac{2^{k}}{\sqrt{z}}\left\langle\frac{1}{\sqrt{z}}(e_{0}\overline{e}_{1})^{2r+1},e_{0}^{a}e_{1}^{k-a}\right\rangle_{\mathrm{top}}\\
&=\delta_{a,2r+1}(\pi\sqrt{-1})^{a}2^{k}\frac{1}{\binom{k}{k/2}}.
\qedhere
\end{align*}
\end{proof}

\begin{corollary}
The period matrix of the period pairing with respective to the bases $\left\{\delta_b\right\}_{b=0}^{k^{\prime}}$ of $H_{1}^{\mathrm{rd}}$ and $\left\{\omega_{k,j}\right\}_{j=0}^{k^{\prime}}$ of $H^{1}_{\mathrm{dR}}$ is $P=(P_{bj})$, where
\[
P_{bj} = \left\langle \delta_{b},v_{0}^{k}z^{j}\frac{\mathrm{d}z}{z}\right\rangle_{\mathrm{per}}=(\pi\sqrt{-1})^{b}(-1)^{k-b}2^{k-2j}{\mathrm{IKM}}_{k}(b,2j)
\]
for $0\leq b\leq k^{\prime}$ and $0\leq j\leq k^{\prime}$. Moreover, $P$ is invertible.
\end{corollary}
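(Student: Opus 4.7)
The plan is essentially to package together previously established results. The formula for each entry $P_{bj}$ is not new: it is exactly the content of Proposition \ref{period_pairing}, which already evaluates $\langle\delta_b,v_0^k z^j\frac{\mathrm{d}z}{z}\rangle_{\mathrm{per}}$ as the integral of $\sqrt{z}\,(-A_0)^b B_0^{k-a}\,z^{j-1}\mathrm{d}z$ over $\sigma_+$ (the circle pieces contribute nothing in the limit) and then translates this through the change of variable $z=t^2/4$ into the Bessel moment $\mathrm{IKM}_k(b,2j)$, with bookkeeping constants $(\pi\sqrt{-1})^b(-1)^{k-b}2^{k-2j}$. So I would begin by simply citing Proposition \ref{period_pairing} to write down the matrix $P$.

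For the invertibility, the strategy is to appeal to \cite[Corollary 2.11]{MR4578001}, which asserts that the period pairing
\[
H_1^{\mathrm{rd}}\bigl(\mathbb{G}_{\mathrm{m}},(\sqrt{z}\Sym^k\mathrm{Kl}_2)^\nabla\bigr)_{\mathbb{C}}\times H^1_{\mathrm{dR}}\bigl(\mathbb{G}_{\mathrm{m}},\sqrt{z}\Sym^k\mathrm{Kl}_2\bigr)_{\mathbb{C}}\longrightarrow \mathbb{C}
\]
is a perfect pairing of complex vector spaces. Combined with the bases
$\{\delta_b\}_{b=0}^{k'}$ of $H_1^{\mathrm{rd}}$ from Corollary \ref{Betti_basis_assume_dim} and $\{\omega_{k,j}\}_{j=0}^{k'}$ of $H^1_{\mathrm{dR}}$ from Corollary \ref{de_rham_basis}, both of dimension $\lfloor(k+1)/2\rfloor=k'+1$ by Proposition \ref{dimthm}, the matrix of a perfect pairing in chosen bases is automatically invertible. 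This is the final line of the argument.

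There is no genuine obstacle here; the statement is a consolidation corollary and all the hard work (the explicit integral computation, the dimension count, the construction of bases via non-vanishing of Poincar\'{e} and Betti determinants) is already carried out in the earlier parts of the paper. The only mild point of care is to keep track of the order of the two arguments in the pairing (cycle first, cohomology class second) so that the matrix entries are indexed consistently, which is already the convention used in Proposition \ref{period_pairing}. Thus the proof is essentially: ``Apply Proposition \ref{period_pairing} for the formula, and conclude invertibility from the perfectness of the period pairing together with Corollaries \ref{de_rham_basis} and \ref{Betti_basis_assume_dim}.''
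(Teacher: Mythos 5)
Your proposal is correct and is exactly the argument the paper intends: the entry formula is a verbatim citation of Proposition \ref{period_pairing}, and invertibility follows because the period pairing is perfect (by \cite[Corollary 2.11]{MR4578001}) and the two sets are bases of spaces of equal dimension $k'+1$ by Corollaries \ref{de_rham_basis} and \ref{Betti_basis_assume_dim}. (The paper in fact states this corollary without proof, treating it as immediate in precisely this way; the only blemish in your write-up is the harmless typo $B_0^{k-a}$ for $B_0^{k-b}$.)
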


\begin{remark}[determinant of the period matrix]\label{period_matrix_determinant}
In fact, 
\begin{align*}
\det P&=(\pi\sqrt{-1})^{k^{\prime}(k^{\prime}+1)/2}(-1)^{(2k-k^{\prime})(k^{\prime}+1)/2}2^{(k-k^{\prime})(k^{\prime}+1)}\det\left({\mathrm{IKM}}_{k}(b,2j)\right),
\end{align*}
where
\begin{align*}
\det\left({\mathrm{IKM}}_{k}(b,2j)\right)=
\begin{cases}
\det\left(M_{k^{\prime}+1}\right) & {\text{if }}k{\text{ is odd;}}\\
\det\left(N_{k^{\prime}+1}\right) & {\text{if }}k{\text{ is even.}}\\
\end{cases}
\end{align*}
The definition of $M_{r}$ and $N_{r}$ are given in Appendix \ref{section_period_determinant} and their determinants are given in Corollary \ref{period_determinant} explicitly. 
\end{remark}

\subsection{\texorpdfstring{$\mathbb{Q}$}{}-linear and quadratic relations on Bessel moments}

We have now developed all the tools and computations to see the wonderful results in $\mathbb{Q}$-linear and quadratic relations on Bessel moments.

\begin{corollary}\label{linear_relation_Bessel_moments}
For $k=4r+4$,
\begin{align*}
\sum_{j=0}^{r}\binom{k/2}{2j}(-1)^{j}\pi^{2j}{\mathrm{IKM}}_{k}(2j,2i)
= \begin{cases}
(-1)^r\pi^{2r+2}{\mathrm{IKM}}_{k}(2r+2,2i) & \text{if $0\leq i\leq r$}, \\
(-1)^r\pi^{2r+2}{\mathrm{IKM}}_{k}^{\mathrm{reg}}(2r+2,2i)
	& \text{if $r+1\leq i\leq k^{\prime}$}.
\end{cases}
\end{align*}
For $k=4r+2$,
\begin{align}\label{4r+2_algebraic_relation}
\sum_{j=0}^{r}\binom{k/2}{2j}(-1)^{j}\pi^{2j}{\mathrm{IKM}}_{k}(2j,2i)
= \begin{cases}
0 & \text{if $0\leq i\leq r-1$}, \\
\gamma_{k,i-r}2^{2i-2r}
\displaystyle\sum_{j=0}^{r}\binom{k/2}{2j}(-1)^{j}\pi^{2j}{\mathrm{IKM}}_{k}(2j,2r)
	& \text{if $r+1\leq i\leq 2r$}.
\end{cases}
\end{align}
\end{corollary}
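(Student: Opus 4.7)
The idea is to exploit the fact that the relation in the moderate decay homology provided by Lemma \ref{kernel_cycle} must be annihilated by the compactly supported period pairing against any class in $H^{1}_{\mathrm{dR,c}}$. Evaluating this annihilation on the explicit classes $\widetilde{\omega}_{k,i}$ and translating each pairing into Bessel moments via Proposition \ref{compact_support_period} will yield the stated identities directly.

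In more detail, I would first apply the compactly supported period pairing $\langle\ ,\ \rangle_{\mathrm{per,c}}$ to the zero relation of Lemma \ref{kernel_cycle} paired on the right. For $k=4r+4$, Lemma \ref{kernel_cycle} gives $\sum_{j=0}^{r+1}\binom{k/2}{2j}\gamma_{2j}=0$, so pairing against $\widetilde{\omega}_{k,i}$ produces
\[
0=\sum_{j=0}^{r+1}\binom{k/2}{2j}\bigl\langle \widetilde{\omega}_{k,i},\gamma_{2j}\bigr\rangle_{\mathrm{per,c}}.
\]
For $k=4r+2$, pairing the relation $\sum_{j=0}^{r}\binom{k/2}{2j}\gamma_{2j}=0$ against $\widetilde{\omega}_{k,i}$ (for $i\ne r$) yields the analogous identity. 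Once these are set up, the proof reduces to substituting the explicit formulas from Proposition \ref{compact_support_period}.

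The computation then proceeds case by case using the $(-1)^{k-a}(\pi\sqrt{-1})^{a}2^{k-2j}$ prefactor from that proposition. For each value of the summation index $j$ (playing the role of $a/2$), one reads off which of the four cases of Proposition \ref{compact_support_period} applies: when $2j<k/2$, the pairing is the honest Bessel moment $\mathrm{IKM}_{k}(2j,2i)$; when $2j=k/2$ (which occurs only at the top of the sum in the $k=4r+4$ case), one picks up the regularized moment $\mathrm{IKM}^{\mathrm{reg}}_{k}(2r+2,2i)$ whenever $i\geq r+1$ and the ordinary one when $i\leq r$. Collecting powers (noting $(\pi\sqrt{-1})^{2j}=(-1)^{j}\pi^{2j}$ and $(-1)^{k-2j}=1$ for even $k$) and dividing out the common factor $2^{k-2i}$ gives exactly the first identity of the corollary, with the sign $(-1)^{r}$ produced by moving the top term to the right-hand side.

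The argument for $k=4r+2$ is parallel, but here the middle-case formula in Proposition \ref{compact_support_period} contributes the correction $-\gamma_{k,i-r}2^{2i-2r}\mathrm{IKM}_{k}(2m,2r)$ whenever $r+1\leq i\leq 2r$, which is precisely the right-hand side of \eqref{4r+2_algebraic_relation}; for $0\leq i\leq r-1$ one is in the ``otherwise'' case and the relation is homogeneous. I expect no serious obstacle: the potential pitfall is purely bookkeeping, namely keeping track of which index ($j$ in the corollary versus $a$ and $j$ in Proposition \ref{compact_support_period}) plays which role and verifying that the parity conditions on $a=2j$ correctly rule out the third (odd-$a$) case of Proposition \ref{compact_support_period}. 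Once this is cleanly organized the result follows by direct substitution.
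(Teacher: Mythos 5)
Your proposal is correct and follows exactly the paper's own argument: pair the vanishing cycle relation of Lemma \ref{kernel_cycle} with $\widetilde{\omega}_{k,i}$ under $\langle\ ,\ \rangle_{\mathrm{per,c}}$ and substitute the explicit values from Proposition \ref{compact_support_period}. Your bookkeeping (the ranges $j\leq r+1$ for $k=4r+4$ and $j\leq r$ for $k=4r+2$, the sign $(\pi\sqrt{-1})^{2j}=(-1)^{j}\pi^{2j}$, the common factor $2^{k-2i}$, and the identification of which case of the proposition applies) all checks out.
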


\begin{proof}
By Lemma \ref{kernel_cycle}, we know that
\begin{align*}
\sum_{j=0}^{k/4}\binom{k/2}{2j}\gamma_{2j}&=0 {\text{ if }}k\equiv 0\bmod{4};\\
\sum_{j=0}^{\left(k-2\right)/4}\binom{k/2}{2j}\gamma_{2j}&=0 {\text{ if }}k\equiv 2\bmod{4}.
\end{align*}
Then take the pairing with $\widetilde{\omega}_{k,i}$ in the compactly supported de Rham cohomology. Combining with the result of Proposition \ref{compact_support_period}, we obtain the desired algebraic relation.
\end{proof}

\begin{remark}
The above linear algebraic relations
for $i$ in the range $0\leq i\leq r$, under the name {\textit{sum rule identities}}, are previously proved by analytic method in \cite{MR3902501} (see \cite[(1.3)]{MR3902501} for $k\equiv 2\bmod{4}$ and \cite[(1.5)]{MR3902501} for $k\equiv 0\bmod{4}$).
\end{remark}

\begin{corollary}\label{cor_bessel_dim_upper_bound}
For any $k$ and any $0\leq a\leq k^{\prime}$, the dimension of the $\mathbb{Q}$-vector space generated by the Bessel moments has an upper bound:
\[
\dim {\mathrm{span}}_{\mathbb{Q}}\left\{{\mathrm{IKM}}_{k}(a,2j)\mid j\in\left\{0\right\}\cup \mathbb{N}\right\}\leq k^{\prime}+1.
\]
If $k$ is even, the dimension of the $\mathbb{Q}$-vector space generated by the regularized Bessel moments has an upper bound:
\[
\dim {\mathrm{span}}_{\mathbb{Q}}\left\{{\mathrm{IKM}}^{\mathrm{reg}}_{k}(k/2,2j)\mid j\in\left\{0\right\}\cup \mathbb{N}\right\}\leq k^{\prime}+1.
\]
Here when $0\leq j\leq \left\lfloor\frac{k-1}{4}\right\rfloor =r$, we do not need to regularize the Bessel moments, that is, ${\mathrm{IKM}}^{\mathrm{reg}}_{k}(k/2,2j)={\mathrm{IKM}}_{k}(k/2,2j)$ (see Lemma \ref{regularized_lemma}).
\end{corollary}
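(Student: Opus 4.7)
The strategy exploits the $\mathbb{Q}$-structure and finite dimensionality of the de Rham cohomology spaces to convert $\mathbb{Q}$-linear relations among cohomology classes into $\mathbb{Q}$-linear relations on (regularized) Bessel moments via the period pairings of Propositions \ref{period_pairing} and \ref{compact_support_period}.

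\textbf{First bound.} By Proposition \ref{dimthm}, $H^{1}_{\mathrm{dR}}\bigl(\mathbb{G}_{\mathrm{m}},\sqrt{z}\Sym^{k}{\mathrm{Kl}}_{2}\bigr)$ has $\mathbb{Q}$-dimension $k^{\prime}+1$, and every class $v_{0}^{k}z^{j}\tfrac{\mathrm{d}z}{z}$ (for $j\geq 0$) lies in its $\mathbb{Q}$-rational structure. Hence the collection $\{v_{0}^{k}z^{j}\tfrac{\mathrm{d}z}{z}:j\geq 0\}$ spans a $\mathbb{Q}$-subspace of dimension at most $k^{\prime}+1$. For any fixed $a\in\{0,\ldots,k^{\prime}\}$ the rapid decay cycle $\delta_{a}$ is $\mathbb{Q}$-rational, and the computation of Proposition \ref{period_pairing} extends verbatim to all $j\geq 0$ because the integrand $I_{0}^{a}K_{0}^{k-a}t^{2j}$ decays exponentially at infinity (since $a<k/2$) and has only an integrable logarithmic singularity at $0$. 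Applying the $\mathbb{Q}$-linear functional $\omega\mapsto\langle\delta_{a},\omega\rangle_{\mathrm{per}}$ gives $(\pi\sqrt{-1})^{a}(-1)^{k-a}2^{k}\cdot 2^{-2j}\mathrm{IKM}_{k}(a,2j)$; factoring out the common complex scalar and using $2^{-2j}\in\mathbb{Q}$, the $\mathbb{Q}$-span of $\{\mathrm{IKM}_{k}(a,2j):j\geq 0\}$ has the same dimension over $\mathbb{Q}$ as the image of the functional, hence is bounded by $k^{\prime}+1$.

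\textbf{Second bound.} For $k$ even and $a=k/2$, I instead pair the $\mathbb{Q}$-rational moderate decay cycle $\gamma_{k/2}\in H_{1}^{\mathrm{mod}}$ against elements of $H^{1}_{\mathrm{dR,c}}$, which is also of $\mathbb{Q}$-dimension $k^{\prime}+1$. Extending Lemma \ref{lem_formal_sol} to all $j\geq 0$ produces $\widetilde{\omega}_{k,j}\in H^{1}_{\mathrm{dR,c}}$ with $\mathbb{Q}$-rational coefficients: when $k\equiv 0\bmod 4$ the residue of $(A_{0}B_{0})^{k/2}w^{-j-3/2}$ at $\infty$ vanishes automatically from its half-integer exponents, while for $k\equiv 2\bmod 4$ (and $j\neq r$) one replaces $v_{0}^{k}z^{j}\tfrac{\mathrm{d}z}{z}$ by $\omega_{k,j}=v_{0}^{k}z^{j}\tfrac{\mathrm{d}z}{z}-\gamma_{k,j-r}v_{0}^{k}z^{r}\tfrac{\mathrm{d}z}{z}$ to cancel the residue. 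Proposition \ref{compact_support_period} then evaluates $\langle\widetilde{\omega}_{k,j},\gamma_{k/2}\rangle_{\mathrm{per,c}}$ as $(\pi\sqrt{-1})^{k/2}$ times a $\mathbb{Q}$-linear expression in the regularized moments $M_{j}:=\mathrm{IKM}^{\mathrm{reg}}_{k}(k/2,2j)$. For $k\equiv 0\bmod 4$ this expression is simply a nonzero rational multiple of $M_{j}$, and the argument closes exactly as in the first bound, yielding $\dim_{\mathbb{Q}}\mathrm{span}\{M_{j}:j\geq 0\}\leq k^{\prime}+1$.

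\textbf{The case $k\equiv 2\bmod 4$ and the main obstacle.} For $k=4r+2$, Proposition \ref{compact_support_period} gives $\langle\widetilde{\omega}_{k,j},\gamma_{k/2}\rangle_{\mathrm{per,c}}$ as a rational multiple of $(\pi\sqrt{-1})^{k/2}(M_{j}-\gamma_{k,j-r}2^{2j-2r}M_{r})$, while $\langle\widehat{m}_{2r+1},\gamma_{k/2}\rangle_{\mathrm{per,c}}$ is a nonzero rational multiple of $(\pi\sqrt{-1})^{k/2}$. Hence the image of the pairing functional lies in $(\pi\sqrt{-1})^{k/2}\bigl(\mathbb{Q}\text{-span}\{M_{j}-\gamma_{k,j-r}2^{2j-2r}M_{r}:j\geq 0,\,j\neq r\}+\mathbb{Q}\bigr)$, a $\mathbb{Q}$-subspace of $\mathbb{C}$ of dimension at most $k^{\prime}+1$. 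Writing $M_{j}=(M_{j}-\gamma_{k,j-r}2^{2j-2r}M_{r})+\gamma_{k,j-r}2^{2j-2r}M_{r}$, the target span $\mathrm{span}_{\mathbb{Q}}\{M_{j}:j\geq 0\}$ is contained in this image together with $\mathbb{Q}M_{r}$. The main obstacle is to argue that $M_{r}$ itself is already accounted for by the image---so that the sharp bound $k^{\prime}+1$ (rather than $k^{\prime}+2$) is achieved; the key point is that the contribution $\Phi(\widehat{m}_{2r+1})$ occupies exactly one of the $k^{\prime}+1$ dimensions with rational constants, and the remaining $k^{\prime}$ independent directions coming from $\{\widetilde{\omega}_{k,j}\}_{j\neq r}$ together with $\mathbb{Q}M_{r}$ give precisely $k^{\prime}+1$. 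Verifying this delicate counting, together with the routine check that the extended $\widetilde{\omega}_{k,j}$ for $j>k^{\prime}$ retain $\mathbb{Q}$-rational coefficients (via the recursion at $0$ and the antiderivative construction at $\infty$), is the principal technical step.
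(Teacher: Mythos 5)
Your strategy is the same as the paper's: both bounds are extracted from the fact that the rational structures on $H^{1}_{\mathrm{dR}}$ and $H^{1}_{\mathrm{dR,c}}$ have dimension $k'+1$, so that the class indexed by any $s>k'$ is a $\mathbb{Q}$-linear combination of the chosen rational basis, and pairing that relation against the fixed cycle $\delta_{a}$ (resp.\ $\gamma_{k/2}$) transports it to a $\mathbb{Q}$-linear relation among the (regularized) Bessel moments. Your first bound, and your treatment of the second bound when $4\mid k$, are complete and agree with the paper's argument; the extension of Lemma \ref{lem_formal_sol} and of Propositions \ref{period_pairing} and \ref{compact_support_period} to indices $s>k'$ that you invoke is indeed needed but unproblematic.

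The gap is the one you name yourself and then leave open: for $k=4r+2$ your argument, as written, only delivers the bound $k'+2$. Setting $M_{j}=\mathrm{IKM}^{\mathrm{reg}}_{k}(k/2,2j)$, the image of the functional $(\pi\sqrt{-1})^{-k/2}\langle\,\cdot\,,\gamma_{k/2}\rangle_{\mathrm{per,c}}$ on the $(k'+1)$-dimensional rational $H^{1}_{\mathrm{dR,c}}$ is the span $W$ of $1$ (coming from $\widehat{m}_{2r+1}$), of $M_{j}$ for $j\leq r-1$, and of $M_{j}-\gamma_{k,j-r}2^{2j-2r}M_{r}$ for $j\geq r+1$; this gives $\dim_{\mathbb{Q}}W\leq k'+1$ but only $\mathrm{span}_{\mathbb{Q}}\{M_{j}\}\subseteq W+\mathbb{Q}M_{r}$, hence a bound of $k'+2$. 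To reach $k'+1$ you must prove one of two additional facts: either the $\widehat{m}_{2r+1}$-component of $\widetilde{\omega}_{k,s}$ vanishes for every $s>k'$ (so the constant $1$ never enters and the relevant subspace of $W$ has dimension at most $k'$), or $M_{r}$ already lies in the span of the remaining generators. You defer exactly this verification as ``the principal technical step,'' so the second inequality is not established for $k\equiv 2\bmod 4$. For what it is worth, you have located precisely the point where the paper's own two-line proof is also silent: the paper simply asserts that the resulting relation involves only the moments $M_{0},\dots,M_{k'},M_{s}$ without addressing the coefficient of $\widehat{m}_{2r+1}$, so closing this step would be a genuine improvement rather than a mere restatement.
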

\begin{proof}
We know that the dimensions of $H^{1}_{\mathrm{dR}}\big(\mathbb{G}_{\mathrm{m}},\sqrt{z}\Sym^{k}{\mathrm{Kl}}_{2}\big)$ and $H^{1}_{\mathrm{dR,c}}\big(\mathbb{G}_{\mathrm{m}},\sqrt{z}\Sym^{k}{\mathrm{Kl}}_{2}\big)$ are $k^{\prime}+1$.

For each integer $s>k'$, since $\left\{v_{0}^{k}z^{j}\frac{{\mathrm{d}}z}{z}\right\}_{j=0,\cdots,k^{\prime}}$ form a basis of $H^{1}_{\mathrm{dR}}\big(\mathbb{G}_{\mathrm{m}},\sqrt{z}\Sym^{k}{\mathrm{Kl}}_{2}\big)$, we may express $v_{0}^{k}z^{s}\frac{{\mathrm{d}}z}{z}$ as the $\mathbb{Q}$-linear combination of the basis. Then after we take the period pairing between $v_{0}^{k}z^{s}\frac{{\mathrm{d}}z}{z}$ and the rapid decay cycle $\delta_{a}$ (see Proposition \ref{period_pairing}), the $\mathbb{Q}$-linear relation becomes a $\mathbb{Q}$-linear relation for the Bessel moments
\[
\left\{{\mathrm{IKM}}_{k}(a,2j)\mid j=0,\cdots,k^{\prime}\right\}\cup \left\{{\mathrm{IKM}}_{k}(a,2s)\right\}.
\]

If $k$ is even, similarly, when $s>k^{\prime}$, express $\widetilde{\omega}_{k,s}\in H^{1}_{\mathrm{dR,c}}\big(\mathbb{G}_{\mathrm{m}},\sqrt{z}\Sym^{k}{\mathrm{Kl}}_{2}\big)$ as the $\mathbb{Q}$-linear combination of the basis describe in Corollary \ref{de_rham_basis}. Then taking the compactly supported period pairing (see Proposition \ref{compact_support_period}), the $\mathbb{Q}$-linear equivalence become the $\mathbb{Q}$-linear relation for the regularized Bessel moments
\[
\left\{{\mathrm{IKM}}_{k}(a,2j)\mid j=0,\cdots,r-1\right\}\cup\left\{{\mathrm{IKM}}^{\mathrm{reg}}_{k}(a,2j)\mid j=r,\cdots,k^{\prime}\right\}\cup \left\{{\mathrm{IKM}}^{\mathrm{reg}}_{k}(a,2s)\right\}.
\qedhere
\]
\end{proof}

\begin{remark}
In \cite{MR2433887}, Borwein and Salvy provide a recurrence to find out the $\mathbb{Q}$-linear combination for Bessel moments by analyzing the symmetric power of the modified Bessel differential operator. Moreover, Zhou proves a similar result in \cite{MR4437425} for the $\mathbb{Q}$-linear dependence for Bessel moments ${\mathrm{IKM}}_{k}(a,2j-1)$. Our result is parallel to Zhou's result.
\end{remark}

\begin{proposition}\label{quadratic_relation}
With respect to the bases of $H^{{\mathrm{rd}}}_{1}$, $H^{{\mathrm{mod}}}_{1}$, $H_{\mathrm{dR}}^{1}$, and $H_{\mathrm{dR,c}}^{1}$ described in Corollaries \ref{de_rham_basis}, \ref{Betti_basis_assume_dim}, we form the pairing matrices:
\begin{enumerate}
\item $B$, the Betti intersection pairing matrix between $H^{{\mathrm{rd}}}_{1}$ and $H^{{\mathrm{mod}}}_{1}$ in Proposition \ref{Betti_intersection_pairing}.
\item $D$, the Poincar\'{e} pairing matrix between $H_{\mathrm{dR,c}}^{1}$ and $H_{\mathrm{dR}}^{1}$ in Proposition \ref{de_rham_paiting_matrix}.
\item $P$, the period pairing matrix between $H^{{\mathrm{rd}}}_{1}$ and $H_{\mathrm{dR}}^{1}$ in Proposition \ref{period_pairing}.
\item $P_{\mathrm{c}}$, the period pairing matrix between $H_{\mathrm{dR,c}}^{1}$ and $H^{{\mathrm{mod}}}_{1}$ in Proposition \ref{compact_support_period}.
\item $B_{\mathrm{mid}}$, the Betti pairing matrix on $H^{\mathrm{mid}}_{1}$.
\item $D_{\mathrm{mid}}$, the Poincar\'{e} pairing matrix on $H_{\mathrm{mid}}^{1}$.
\item $P_{\mathrm{mid}}$, the period pairing matrix between $H^{\mathrm{mid}}_{1}$ and $H_{\mathrm{mid}}^{1}$.\footnote{$B,D,P,P_{\mathrm{c}}$ are square matrices of size $k^{\prime}+1$ and that $B_{\mathrm{mid}},D_{\mathrm{mid}},P_{\mathrm{mid}}$ are of size $k^{\prime}+1-\delta_{4\mathbb{Z}+2,k}$. When $k\equiv 0,1,3\bmod{4}$, we have $B=B_{\mathrm{mid}}$, $D=D_{\mathrm{mid}}$, and $P_{\mathrm{mid}}=P=P_{\mathrm{c}}^{t}$.}
\end{enumerate}

Then we have the algebraic quadratic relations
\begin{align}
PD^{-1}P_{\mathrm{c}}&=(-1)^{k}(2\pi\sqrt{-1})^{k+1}B,\label{quadratic_matrix_relation}\\
P_{\mathrm{mid}}D^{-1}_{\mathrm{mid}}P_{\mathrm{mid}}^{t}&=(-1)^{k}(2\pi\sqrt{-1})^{k+1}B_{\mathrm{mid}}.\label{quadratic_matrix_relation_middle}
\end{align}

\end{proposition}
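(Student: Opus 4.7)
The plan is to derive both identities as a specialization of the general quadratic-relation theorem for self-dual connections on curves in \cite{MR4601771}. That theorem takes as input an algebraic connection equipped with a horizontal self-duality $\lambda:\mathcal{M}\xrightarrow{\sim}\mathcal{M}^{\vee}$ and outputs an identity of the form $P\cdot D^{-1}\cdot P_{c}=c\cdot B$, in which the scalar $c$ is determined by how $\lambda$ interchanges the algebraic and topological pairings together with the Tate-twist factor intrinsic to Poincar\'{e} duality on curves. All the inputs are in place: the self-duality $\lambda$ on $\sqrt{z}\Sym^{k}\mathrm{Kl}_{2}$ was constructed in Section \ref{algebraic_topological_pairing}, and the four matrices $B,D,P,P_{c}$ have been computed in Propositions \ref{Betti_intersection_pairing}, \ref{de_rham_paiting_matrix}, \ref{period_pairing} and \ref{compact_support_period} with respect to the bases of Corollaries \ref{de_rham_basis} and \ref{Betti_basis_assume_dim}.

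The substantive step is then pinning down $c=(-1)^{k}(2\pi\sqrt{-1})^{k+1}$, and this is where I expect the main obstacle. One factor of $2\pi\sqrt{-1}$ comes from the Tate twist of Poincar\'{e} duality: the Poincar\'{e} pairing takes values in $\mathbb{Q}(-k-1)$ while the Betti intersection pairing is $\mathbb{Q}$-valued. The remaining $(-1)^{k}(2\pi\sqrt{-1})^{k}$ is the scalar relating $\langle\cdot,\cdot\rangle_{\mathrm{alg}}$ and $\langle\cdot,\cdot\rangle_{\mathrm{top}}$ on horizontal sections of $\sqrt{z}\Sym^{k}\mathrm{Kl}_{2}$: the twist by $(\mathcal{O},\mathrm{d}+\tfrac{1}{2}\tfrac{\mathrm{d}z}{z})$ contributes nothing, since both its pairings give $1$ on $\tfrac{1}{\sqrt{z}}\otimes\tfrac{1}{\sqrt{z}}$; each Kloosterman tensor factor contributes $2\pi\sqrt{-1}$ via $\langle\cdot,\cdot\rangle_{\mathrm{top}}=2\pi\sqrt{-1}\langle\cdot,\cdot\rangle_{\mathrm{alg}}$ on $\mathrm{Kl}_{2}^{\nabla}$; and the sign $(-1)^{k}$ emerges on matching the factor $(-1)^{a}$ in the algebraic formula of Section \ref{algebraic_topological_pairing} with the $(-1)^{k-a}$ in the topological formula. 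A further complication is the case $k\equiv 2\pmod{4}$, where the extra generator $\widehat{m}_{2r+1}\in H^{1}_{\mathrm{dR,c}}$ must be matched coherently under $\lambda$ with the ``sum-rule'' relation of Lemma \ref{kernel_cycle}; this requires careful bookkeeping of the formal solutions $\xi_{i},\eta_{i}$ at the irregular singularity at $\infty$ and of the residues computed in the proof of Proposition \ref{de_rham_paiting_matrix}.

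Finally, \eqref{quadratic_matrix_relation_middle} is deduced by restriction: the projection $H^{1}_{\mathrm{dR,c}}\twoheadrightarrow H^{1}_{\mathrm{mid}}$ and the corresponding quotient of $H^{\mathrm{rd}}_{1}$ are preserved by the self-duality, the pairings $D_{\mathrm{mid}}$ and $B_{\mathrm{mid}}$ are the induced non-degenerate pairings on the middle parts, and on this image $P_{\mathrm{mid}}$ coincides with both $P$ and $P_{c}^{t}$. The middle-part identity then follows at once from \eqref{quadratic_matrix_relation}.
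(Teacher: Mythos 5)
Your proposal follows essentially the same route as the paper: the paper's entire proof consists of citing the general quadratic-relation theorem for periods of meromorphic flat connections, namely \cite[Corollaries 2.14, 2.16]{MR4601771}, exactly as you do. The additional bookkeeping you sketch for pinning down the constant $(-1)^{k}(2\pi\sqrt{-1})^{k+1}$ (one Tate twist from Poincar\'{e} duality, $(2\pi\sqrt{-1})^{k}$ and the sign from comparing $\langle\ ,\ \rangle_{\mathrm{alg}}$ with $\langle\ ,\ \rangle_{\mathrm{top}}$ on horizontal sections) is consistent with the pairings set up in Section \ref{algebraic_topological_pairing}, but the paper does not carry it out either, deferring it entirely to the cited corollaries.
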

\begin{proof}
This quadratic relation is a general phenomenon on periods of meromorphic flat connection on complex manifolds. We refer to \cite[Corollaries 2.14, 2.16]{MR4601771} for more details.
\end{proof}

From this proposition, when $k\equiv 0,1,3\bmod{4}$, we see the Bessel moments have quadratic relation given by \eqref{quadratic_matrix_relation}. On the other hand, when $k\equiv 2\bmod{4}$, the relation involves some combination of Bessel moments and regularized Bessel moments in the matrix $P_{\mathrm{c}}$. In the following discussion, we provide another expression of this relation, and we will see the pure quadratic relation involving only Bessel moments.

When $k\equiv 2\bmod{4}$, write $k=4r+2$ and define two $(k^{\prime}+1)\times k^{\prime}$ matrices with rational coefficients:
\[ R_{k}=
\left(\begin{array}{cccc}
I_{r} & & 0 &\\ 
0 & -\gamma_{k,1} & \cdots & -\gamma_{k,k^{\prime}-r}\\
0 & & I_{r} &
\end{array}\right),
\quad
L_{k}=\left(\begin{array}{ccccccc}
0 & -\binom{k/2}{2} & 0 & -\binom{k/2}{4} & \cdots & 0 & -\binom{k/2}{k^{\prime}}\\ 
 & & & I_{k^{\prime}} &
\end{array}\right). \]
By the linear relations \eqref{4r+2_algebraic_relation} in Corollary \ref{linear_relation_Bessel_moments}, we have
\[
PR_{k}=L_{k}P_{\mathrm{mid}}.
\]
Also, $P_{\mathrm{mid}}$ is obtained by deleting the first row of $L_{k}P_{\mathrm{mid}}$. Set $\widetilde{B}=L_{k}B_{\mathrm{mid}}L_{k}^{t}$ and $\widetilde{D}=R_{k}D^{-1}_{\mathrm{mid}}R_{k}^{t}$ which are square matrices of size $k^{\prime}+1$ with rational coefficients. Then $B_{\mathrm{mid}}$ is obtained by deleting the first row and column from $\widetilde{B}$. Therefore, the quadratic relation \eqref{quadratic_matrix_relation_middle} (involving linear combinations of Bessel moments) now becomes
\[
P\widetilde{D}P^{t}=(-1)^{k}(2\pi\sqrt{-1})^{k+1}\widetilde{B}
\]
(involving pure Bessel moments).

\begin{remark}
The matrices $\widetilde{B}$ and $\widetilde{D}$ in the above expression are singular because of the linear relations \eqref{4r+2_algebraic_relation} in Corollary \ref{linear_relation_Bessel_moments}. This expression is equivalent to the middle part quadratic relation \eqref{quadratic_matrix_relation_middle} together with linear relations \eqref{4r+2_algebraic_relation}.
\end{remark}

\begin{proposition}
When $k=4r+2$, the middle part period matrix is a $k^{\prime}\times k^{\prime}$ matrix given by
\[
P_{\mathrm{mid}}=\left(\left\langle\delta_{b},\omega_{k,i}\right\rangle_{\mathrm{per}}\right)_{b=1,\cdots,k^{\prime},\ i=0,\cdots,\hat{r},\cdots,k^{\prime}}.
\]
The determinant of this matrix $P_{\mathrm{mid}}$ is given by
\[
\det P_{\mathrm{mid}}=\pi^{r(k+1)}\sqrt{-1}^{r(k^{\prime}-1)}\frac{2^{r(2r+1)}}{r!}\prod_{a=1}^{k^{\prime}}\frac{(2a+1)^{k^{\prime}+1-a}}{(a+1)^{a+1}}.
\]
\end{proposition}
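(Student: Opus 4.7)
The plan is to derive the formula from the known determinant of the full period matrix $P$ provided in Remark \ref{period_matrix_determinant}.

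First, I would extend the rectangular matrix $R_k$ to an invertible square matrix $\hat R_k$ of size $k'+1$ by inserting the standard basis vector $e_r$ as an additional column at position $r$. Since the inserted column is unit, $\det\hat R_k=1$, and so $\det(P\hat R_k)=\det P$. By construction, the columns of $P\hat R_k$ indexed by $i\neq r$ reproduce the columns of $P_{\mathrm{mid}}$ extended over all rows $b=0,\ldots,k'$, while the $r$-th column agrees with that of $P$. Thus $\det P_{\mathrm{mid}}$ is the $(0,r)$-minor of $P\hat R_k$.

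Next, I would expand $\det(P\hat R_k)$ along the $r$-th column. The key ingredient is Lemma \ref{kernel_cycle}, which implies that the rows of the submatrix formed by the columns $i\neq r$ satisfy the linear relation $\sum_{j=0}^r\binom{k/2}{2j}(\text{row }2j)=0$. A short linear-algebra argument shows that all $(b,r)$-minors vanish for odd $b$, while for even $b=2j$ the minor equals $\binom{k/2}{2j}\det P_{\mathrm{mid}}$ after an explicit permutation sign. Assembling cofactors gives
\begin{equation*}
\det P \;=\; (-1)^r\det P_{\mathrm{mid}}\cdot\sum_{j=0}^r\binom{k/2}{2j}\,\bigl\langle\delta_{2j},v_0^kz^r\tfrac{\mathrm{d}z}{z}\bigr\rangle_{\mathrm{per}}.
\end{equation*}

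The remaining task is to evaluate $S:=\sum_{j=0}^r\binom{k/2}{2j}\langle\delta_{2j},v_0^kz^r\tfrac{\mathrm{d}z}{z}\rangle_{\mathrm{per}}$, which by Proposition \ref{period_pairing} rewrites as $2^{2r+2}\sum_{j=0}^r\binom{k/2}{2j}(-1)^j\pi^{2j}\mathrm{IKM}_k(2j,2r)$. This is the sum rule of Corollary \ref{linear_relation_Bessel_moments} at the \emph{excluded} index $i=r$, the one case where no vanishing identity is available. To pin this value down, I would invoke the class $\widehat m_{2r+1}\in H^1_{\mathrm{dR,c}}$ from Definition \ref{compact_support_de_rham_bisis_elements}: its pairing with $\gamma_{2r+1}$ is given in closed form by Proposition \ref{compact_support_period}, and combining this with the quadratic relation $PD^{-1}P_{\mathrm{c}}=(-1)^k(2\pi\sqrt{-1})^{k+1}B$ isolates the value $S=\pi^{2(r+1)}$ (a sanity check at $r=0$: $\int_0^\infty K_0^2(t)\,\mathrm{d}t=\pi^2/4$ gives $S=\pi^2$).

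Finally, substituting the explicit formula for $\det P$ from Remark \ref{period_matrix_determinant} (which rests on Corollary \ref{period_determinant}) together with the evaluated $S$, the cofactor identity becomes $\det P_{\mathrm{mid}}=(-1)^r\pi^{-2(r+1)}\det P$, and a direct arithmetic simplification of the powers of $\pi$, $\sqrt{-1}$, $2$, and the product over $a$ recovers the claimed closed form. The main obstacle is the third step: the value of $S$ is not forced by any combinatorial sum-rule identity among Bessel moments and must instead be extracted from the arithmetic of the quadratic relation together with the sparse pairings involving $\widehat m_{2r+1}$.
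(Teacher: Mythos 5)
Your structural reduction is correct and is genuinely different from the paper's: the extension of $R_{k}$ to $\hat R_{k}$, the vanishing of the odd minors via the sum rule, and the cofactor identity
\[
\det P=(-1)^{r}\,\det P_{\mathrm{mid}}\cdot S,
\qquad
S=\sum_{j=0}^{r}\binom{k/2}{2j}\Bigl\langle\delta_{2j},v_0^{k}z^{r}\tfrac{\mathrm{d}z}{z}\Bigr\rangle_{\mathrm{per}}
=2^{2r+2}\sum_{j=0}^{r}\binom{k/2}{2j}(-1)^{j}\pi^{2j}{\mathrm{IKM}}_{k}(2j,2r),
\]
all check out. The gap is in the third step. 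First, the asserted value $S=\pi^{2(r+1)}$ is wrong: the correct value is $S=(-1)^{r}\pi^{2(r+1)}$, so your final relation should read $\det P_{\mathrm{mid}}=\pi^{-2(r+1)}\det P$ rather than $(-1)^{r}\pi^{-2(r+1)}\det P$, and your conclusion would be off by $(-1)^{r}$. One sees this already at $r=1$, $k=6$: Remark \ref{period_matrix_determinant} with Corollary \ref{period_determinant} gives $\det P=\tfrac{10}{3}\sqrt{-1}\,\pi^{11}$, the stated formula gives $\det P_{\mathrm{mid}}=\tfrac{10}{3}\sqrt{-1}\,\pi^{7}$, whence $S=(-1)^{1}\det P/\det P_{\mathrm{mid}}=-\pi^{4}$; the general ratio $\det P/\det P_{\mathrm{mid}}=\pi^{2r+2}$ follows from the two closed forms. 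Your $r=0$ sanity check is blind to this sign. Second, and more seriously, the evaluation of $S$ is asserted rather than carried out. Making your sketch precise means multiplying \eqref{quadratic_matrix_relation} on the left by the kernel row vector $\bigl(\tbinom{k/2}{0},0,\tbinom{k/2}{2},0,\dots\bigr)$: the sum rule shows this vector times $P$ equals $S$ times the $\widehat{m}_{2r+1}$-row of $D$, so the left side collapses to $S$ times the $\widehat{m}_{2r+1}$-row of $P_{\mathrm{c}}$, and comparing the $\gamma_{2r+1}$-entries expresses $S$ through $\sum_{j}\binom{k/2}{2j}\langle\delta_{2j},\gamma_{2r+1}\rangle_{\mathrm{Betti}}$. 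That is a closed-form Euler-number sum which you still have to evaluate for every $r$ (it equals $-\tfrac{1}{40}$ for $k=6$), and it is exactly where the missing $(-1)^{r}$ hides.

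The paper avoids $S$ altogether: since the $\widehat{m}_{2r+1}$-row of $P_{\mathrm{c}}$ has a single nonzero entry $(\pi\sqrt{-1})^{k/2}2^{k}/\binom{k}{k/2}$ in the $\gamma_{k/2}$-column, whose complementary minor is $\det P_{\mathrm{mid}}$, one gets $\det P_{\mathrm{c}}=(\pi\sqrt{-1})^{k/2}\tfrac{2^{k}}{\binom{k}{k/2}}\det P_{\mathrm{mid}}$, and then taking determinants in \eqref{quadratic_matrix_relation} with the known $\det P$, $\det D$ (Proposition \ref{de_rham_paiting_matrix}) and $\det B$ (Proposition \ref{Betti_determinant}) finishes the computation. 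Your route can be repaired — either by evaluating the Euler sum via the same Hankel-type determinant machinery used in Proposition \ref{Betti_determinant}, or by extracting $S$ from the quadratic relation as above — but as written the key constant is both unproved and incorrect.
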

\begin{proof}
The matrix $P_{\mathrm{mid}}$ appears in the upper left of the compactly supported period pairing matrix $P_{\mathrm{c}}$. Just take determinant on \eqref{quadratic_matrix_relation} and then use the results of Propositions \ref{de_rham_paiting_matrix}, \ref{Betti_determinant}, and Remark \ref{period_matrix_determinant}.
\end{proof}

\begin{appendices}
\section{The Bessel operator and determinants of Bessel moments}\label{section_bessel_operator_determinants}

\subsection{Symmetric power of the modified Bessel differential operator}\label{section_symmetric_power_of_differential_operator}
Consider the Weyl algebra $\mathbb{Q}\langle t,\partial_{t}\rangle$ consisting of ordinary differential operators. Write $\theta=t\partial_{t}$. The modified Bessel differential operator is an element in the subalgebra
$\mathbb{Q}\langle t^{2},\theta\rangle$
given by $L_{2}=\theta^{2}-t^{2}$.
The corresponding solutions are the modified Bessel functions $I_{0}(t)$ and $K_{0}(t)$.
The $n$-th symmetric power $L_{n+1}\in\mathbb{Q}\left\langle\theta,t^{2}\right\rangle$
of $L_{2}$ has order $n+1$ and the corresponding solutions are $I_{0}^{a}(t)K_{0}^{n-a}(t)$ for $0\leq a\leq n$.
By \cite{MR1809982, MR2433887},
the operator $L_{n+1}=L_{n+1,n}$ can be obtained by the recurrence relation as follows:
\begin{align}\label{symmetric_of_bessel_recurrence}
\begin{split}
L_{0,n}&=1, \\
L_{1,n}&=\theta, \\
L_{k+1,n}&=\theta L_{k,n}-t^{2}k\left(n+1-k\right)L_{k-1,n},\  1\leq k\leq n .
\end{split}
\end{align}
Here we provide two more concrete results about the operator $L_{n+1}$.

Put the degree on $\mathbb{Q}\langle t,\theta\rangle$ as $\deg t=\deg \theta=1$.
The associated graded ring $\operatorname{gr}\mathbb{Q}\langle t,\theta\rangle = \mathbb{Q}[\overline{t},\overline{\theta}]$
is a polynomial ring
where $\overline{t}$ and $\overline{\theta}$ are the images of $t$ and $\theta$, respectively.

\begin{proposition}\label{leading-term}
The image of $L_{n+1}$ in $\mathbb{Q}[\overline{t},\overline{\theta}]$ is the polynomial
\begin{equation}\label{result}
\overline{L}_{n+1}(\overline{t},\overline{\theta}) = \begin{cases}
\prod_{i=1}^r \left(\overline{\theta}^2 - (2i-1)^2\overline{t}^2\right)
& \text{if $n+1 = 2r$ is even}, \\
\overline{\theta}\prod_{i=1}^r \left(\overline{\theta}^2 - (2i)^2\overline{t}^2\right)
& \text{if $n+1 = 2r+1$ is odd}.
\end{cases}
\end{equation}
\end{proposition}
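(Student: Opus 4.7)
The plan is to pass to the associated graded ring $\mathbb{Q}[\overline{t},\overline{\theta}]$, where multiplication is commutative. First I would verify by induction on $k$ that $L_{k,n}$ has total degree exactly $k$ with leading term $\theta^{k}$: starting from $L_{0}=1,L_{1}=\theta$, the two summands on the right of \eqref{symmetric_of_bessel_recurrence} both have top degree $k+1$, and the only source of $\theta^{k+1}$ is $\theta\cdot L_{k,n}$. Consequently the recurrence descends to the commutative recurrence on symbols
$$\overline{L}_{k+1,n} = \overline{\theta}\,\overline{L}_{k,n} - k(n+1-k)\,\overline{t}^{\,2}\,\overline{L}_{k-1,n}.$$
Setting $x=\overline{\theta}/\overline{t}$ and $P_{k}(x):=\overline{L}_{k,n}/\overline{t}^{\,k}$ turns this into the polynomial recurrence $P_{k+1}(x)=xP_{k}(x)-k(n+1-k)P_{k-1}(x)$ with $P_{0}=1,P_{1}=x$, and the problem reduces to showing that $P_{n+1}(x)=\prod_{a=0}^{n}(x-(2a-n))$; grouping the roots by parity then yields the two cases of \eqref{result}.

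Since $P_{n+1}(x)$ is monic of degree $n+1$, it suffices to exhibit $n+1$ distinct roots, the natural candidates being $c_{a}:=2a-n$ for $a=0,1,\ldots,n$. Fix such an $a$, set $c=c_{a}$, and let $Q_{k}:=P_{k}(c)$. I would encode the sequence by the exponential generating function $\Phi(z)=\sum_{k\geq 0}Q_{k}z^{k}/k!$; a short manipulation of the recurrence converts it into the first-order linear ODE
$$(1-z^{2})\Phi'(z) = (c-nz)\Phi(z).$$
Separating variables and integrating by partial fractions with $\Phi(0)=1$ gives
$$\Phi(z) = (1-z)^{(n-c)/2}(1+z)^{(n+c)/2} = (1-z)^{n-a}(1+z)^{a},$$
a polynomial of degree $n$. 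Hence $Q_{k}=0$ for all $k\geq n+1$; in particular $c_{a}$ is a root of $P_{n+1}$.

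Having produced $n+1$ distinct roots, the monic polynomial $P_{n+1}$ must equal their product, and reinterpreting the identity in terms of $\overline{\theta},\overline{t}$ yields \eqref{result}. The main creative step is the passage to the EGF and the observation that the explicit solution $(1-z)^{(n-c)/2}(1+z)^{(n+c)/2}$ is a polynomial precisely when $c=2a-n$ for some integer $0\leq a\leq n$; this is exactly the expected root set, and everything else is routine bookkeeping in the graded ring.
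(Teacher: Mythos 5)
Your proof is correct, but it reaches the product formula by a genuinely different route than the paper. Both arguments begin the same way: pass to the associated graded ring, check that the $\theta^{k+1}$ term prevents cancellation so the recurrence descends to symbols, and dehomogenize to the monic polynomial recurrence $P_{k+1}=xP_k-k(n+1-k)P_{k-1}$. From there the paper proves a combinatorial lemma via the \emph{ordinary} generating function $f_{m,x}(y)=\sum_i\lambda_{i,m}(x)y^i$: it establishes the three-term functional relation $\lambda_{i,m}(x-1)+\lambda_{i,m}(x+1)=2\lambda_{i,m-1}(x)$, deduces that $\lambda_{m+1,m}$ is the unique monic solution of a functional equation involving $\lambda_{m,m-1}$, and closes the induction on $m$ by verifying two explicit product identities. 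You instead fix a candidate root $c=2a-n$, form the \emph{exponential} generating function of the values $Q_k=P_k(c)$, convert the recurrence into the first-order ODE $(1-z^2)\Phi'=(c-nz)\Phi$, and solve it in closed form as $(1-z)^{n-a}(1+z)^{a}$, a polynomial of degree $n$, forcing $Q_{n+1}=0$; the $n+1$ distinct roots then pin down the monic degree-$(n+1)$ polynomial. I verified the ODE derivation and the partial-fraction integration; both are right, and the formal-power-series solution with $\Phi(0)=1$ is unique, so the identification with the EGF is legitimate. Your argument avoids the induction on $n$ and the two product identities the paper leaves as ``straightforward to verify,'' at the cost of the (clean) observation that the EGF linearizes the recurrence into a solvable first-order equation; the paper's functional-equation approach, by contrast, exposes the relation between the families for consecutive $m$, which is of some independent combinatorial interest. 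Either proof would serve.
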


\begin{proof}
Taking the images in $\mathbb{Q}[\overline{t},\overline{\theta}]$
of the relation \eqref{symmetric_of_bessel_recurrence},
we obtain $\overline{L}_{n+1}=\overline{L}_{n+1,n}$
satisfying
\begin{align*}
\overline{L}_{0,n}&=1,
\quad
\overline{L}_{1,n}=\overline{\theta}, \\
\overline{L}_{k+1,n}&=\overline{\theta}\,\overline{L}_{k,n}-t^{2}k\left(n+1-k\right)\overline{L}_{k-1,n},\  1\leq k\leq n .
\end{align*}

The formula \eqref{result} is then a consequence of the following combinatorics lemma.
\end{proof}

\begin{lemma}\label{keylemma}
For any $m\in\mathbb{N}$,
set the recurrence for $\lambda_{n,m}(x), n\in\mathbb{N}\cup\{0\}$,
\begin{align*}
\lambda_{0,m}&=1,
\quad
\lambda_{1,m}=x, \\
\lambda_{k+1,m}&=x\lambda_{k,m}-k\left(m+1-k\right)\lambda_{k-1,m},\quad k\geq 1.
\end{align*}
Then we have
\begin{align}\label{modresult}
\lambda_{m+1,m}(x)=\begin{cases}\prod_{i=1}^{r}\left(x^{2}-(2i-1)^{2}\right), & m+1=2r,\\ x\prod_{i=1}^{r}\left(x^{2}-(2i)^{2}\right), & m+1=2r+1.\end{cases}
\end{align}
\end{lemma}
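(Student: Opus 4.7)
The plan is to identify $\lambda_{m+1,m}(x)$ via the exponential generating function
\[
F(t,x)=\sum_{k\geq 0}\lambda_{k,m}(x)\,\frac{t^{k}}{k!}.
\]
A one-line induction on $k$ using the recurrence shows that $\lambda_{k,m}(x)\in\mathbb{Q}[x]$ is monic of degree exactly $k$: the leading term of $x\lambda_{k,m}$ dominates the $\lambda_{k-1,m}$ correction. Hence $\lambda_{m+1,m}(x)$ is monic of degree $m+1$, and it suffices to exhibit its $m+1$ distinct roots.

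To get a closed form for $F$, I translate the recurrence into a first-order ODE in $t$. One has $\partial_{t}F=\sum_{k\geq 0}\lambda_{k+1,m}(x)t^{k}/k!$, and substituting the recurrence (with the convention $\lambda_{-1,m}=0$) produces, after reindexing, the term
\[
\sum_{k\geq 1}\frac{k(m{+}1{-}k)\lambda_{k-1,m}(x)t^{k}}{k!}
= t\sum_{j\geq 0}(m-j)\lambda_{j,m}(x)\frac{t^{j}}{j!}
= t(mF-t\,\partial_{t}F).
\]
Rearranging gives the separable equation
\[
(1-t^{2})\,\partial_{t}F=(x-mt)\,F,\qquad F(0,x)=1,
\]
whose unique solution (by partial fractions on $(x-mt)/(1-t^{2})$) is
\[
F(t,x)=(1-t)^{(m-x)/2}(1+t)^{(m+x)/2}.
\]

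Now for $x\in\{-m,-m+2,\ldots,m-2,m\}$, the two exponents $(m\pm x)/2$ are non-negative integers summing to $m$, so $F(t,x)$ reduces to an honest polynomial in $t$ of degree exactly $m$. In particular the coefficient of $t^{m+1}$ vanishes, i.e.\ $\lambda_{m+1,m}(x)=0$ at each of these $m+1$ values. Being monic of degree $m+1$, one therefore has
\[
\lambda_{m+1,m}(x)=\prod_{j=0}^{m}\bigl(x-(m-2j)\bigr).
\]
Pairing the roots $\pm(m-2j)$ gives the two cases of \eqref{modresult}: when $m+1=2r$ the roots are $\pm 1,\pm 3,\ldots,\pm(2r-1)$, producing $\prod_{i=1}^{r}(x^{2}-(2i-1)^{2})$; when $m+1=2r+1$ the roots are $0,\pm 2,\ldots,\pm 2r$, producing $x\prod_{i=1}^{r}(x^{2}-(2i)^{2})$.

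The only subtlety is passing from the formal recurrence to the analytic closed form of $F$. This is harmless because the ODE $(1-t^{2})\partial_{t}F=(x-mt)F$ has a unique solution in $\mathbb{Q}(x)\llbracket t\rrbracket$ with $F(0,x)=1$, and the claimed product satisfies both the ODE and the initial condition. Once $F$ is in hand, the rest is the pigeonhole/parity check above; no deeper obstacle is anticipated.
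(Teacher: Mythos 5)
Your proof is correct, but it takes a genuinely different route from the paper's. The paper works with the ordinary generating function $f_{m,x}(y)=\sum_i\lambda_{i,m}(x)y^i$ and proves by induction the three-term relation $\lambda_{i,m}(x-1)+\lambda_{i,m}(x+1)=2\lambda_{i,m-1}(x)$; combined with the recurrence this gives a functional equation in $x$ that, together with monicity, determines $\lambda_{m+1,m}$ from $\lambda_{m,m-1}$, and the proof then closes by verifying directly that the two claimed product formulas satisfy the corresponding identities (an induction on $m$ with a hand-checked polynomial identity at each step). You instead take the exponential generating function, observe that the recurrence collapses to the first-order separable ODE $(1-t^{2})\partial_{t}F=(x-mt)F$, solve it in closed form as $F=(1-t)^{(m-x)/2}(1+t)^{(m+x)/2}$, and read off the $m+1$ roots $x=m-2j$ from the fact that $F$ truncates to a degree-$m$ polynomial at those values. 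Your computation of the ODE checks out, the uniqueness of the formal power series solution with $F(0,x)=1$ is correctly invoked (the binomial series has coefficients in $\mathbb{Q}[x]$, so the identification is legitimate in $\mathbb{Q}[x]\llbracket t\rrbracket$), and the root-pairing at the end matches \eqref{modresult}. What your approach buys is an explicit closed form for the entire generating function and a proof that needs no case-by-case polynomial identity; it is arguably tidier than the paper's, whose OGF satisfies only a second-order inhomogeneous equation (noted in a footnote there) and whose argument requires verifying two separate product identities. The paper's functional-equation method, on the other hand, stays entirely within polynomial manipulations and avoids any discussion of formal versus analytic solutions of an ODE.
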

\begin{proof}
Notice that $\lambda_{i,m}$ is a monic integral polynomial of degree $i$ for any $m$.
Consider the formal generating function{\footnote{This generating function satisfies the differential equation $-y^{4}f^{\prime\prime}(y)-(2y^{3}-my^{3})f^{\prime}(y)+(1-xy+my^{2})f(y)=1.$}}:
\[
f_{m,x}(y)=\sum_{i=0}^{\infty}\lambda_{i,m}(x)y^{i}.
\]
An induction on $i$ immediately yields the relation
$f_{m,x-1}\left(y\right)+f_{m,x+1}\left(y\right)=2f_{m-1,x}\left(y\right)$
for any $m$ and $x$.\footnote{Equality also holds when viewed as the solution of the corresponding differential equations.} In other words,
$\lambda_{i,m}(x-1)+\lambda_{i,m}(x+1)=2\lambda_{i,m-1}(x)$
for all $i$.
Therefore we obtain
\[ \lambda_{m+1,m}(x-1)+\lambda_{m+1,m}(x+1)=2\lambda_{m+1,m-1}(x)=2x\lambda_{m,m-1}(x) \]
by the recurrence.
Thus, since $\lambda_{m+1,m}(x)$ is a monic polynomial of degree $m+1$, it is uniquely determined by the above functional equation when the polynomial $\lambda_{m,m-1}(x)$ is given. Hence, by the induction, it suffices to show that
\begin{align*}
\prod_{i=1}^{r}\left((x-1)^{2}-(2i-1)^{2}\right)+\prod_{i=1}^{r}\left((x+1)^{2}-(2i-1)^{2}\right)&=2x^{2}\prod_{i=1}^{r-1}\left(x^{2}-(2i)^{2}\right);\\
(x-1)\prod_{i=1}^{r}\left((x-1)^{2}-(2i)^{2}\right)+(x+1)\prod_{i=1}^{r}\left((x+1)^{2}-(2i)^{2}\right)&=2x\prod_{i=1}^{r}\left(x^{2}-(2i-1)^{2}\right),
\end{align*}
which are straightforward to verify.
\end{proof}

\begin{proposition}\label{dimennsion_parameter}
Write $L_{n+1}$ into the form $\sum_{i} t^{i}P_{i}\left(\theta\right)$, where $P_{i}(x)\in\mathbb{Q}\left[x\right]$. Define the integers $a,b$ by
\[
a=\max\left\{i\mid P_{i}\neq 0\right\};\quad b=\min\left\{i\mid P_{i}\neq 0\right\}.
\]
Then we have $a=2\left\lfloor\frac{n+1}{2}\right\rfloor$ and $b=0$.
\end{proposition}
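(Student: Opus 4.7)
My plan is to extract $a$ from the leading symbol supplied by Proposition \ref{leading-term} and to extract $b$ by following the $t^0$--coefficient through the recurrence \eqref{symmetric_of_bessel_recurrence}. One preliminary observation serves both halves: because the recurrence builds $L_{n+1}$ out of the two letters $\theta$ and $t^2$ only, we have $L_{n+1}\in\mathbb{Q}\langle t^2,\theta\rangle$, so the sum $\sum_i t^iP_i(\theta)$ is supported on even exponents $i$, and in particular both $a$ and $b$ are even.

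For $b$, I would rewrite the recurrence on the level of coefficients. Using the Weyl-algebra identity $\theta\, t^i=t^i(\theta+i)$, the expression $L_{k,n}=\sum_i t^iP_i^{(k)}(\theta)$ yields
\[
P_i^{(k+1)}(\theta) = (\theta+i)\,P_i^{(k)}(\theta) - k(n+1-k)\,P_{i-2}^{(k-1)}(\theta).
\]
At $i=0$ the second term vanishes, giving $P_0^{(k+1)}(\theta)=\theta\,P_0^{(k)}(\theta)$; since $P_0^{(0)}=1$ and $P_0^{(1)}=\theta$, induction yields $P_0^{(n+1)}(\theta)=\theta^{n+1}\neq 0$, hence $b=0$.

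For $a$, I would invoke Proposition \ref{leading-term}: with the filtration $\deg t=\deg\theta=1$, the symbol $\overline{L}_{n+1}\in\mathbb{Q}[\overline{t},\overline{\theta}]$ is the homogeneous polynomial \eqref{result} of total degree $n+1$. Expanding \eqref{result}, the monomial of maximal $\overline{t}$-degree is $(-1)^r((2r-1)!!)^2\,\overline{t}^{2r}$ when $n+1=2r$, and $(-1)^r((2r)!!)^2\,\overline{\theta}\,\overline{t}^{2r}$ when $n+1=2r+1$; in both cases the $\overline{t}$--exponent equals $2\lfloor(n+1)/2\rfloor$ with a nonzero coefficient. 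Because the symbol map simply records the top-degree monomials of $L_{n+1}$, this monomial must come from an actual term in $L_{n+1}$, forcing $P_{2\lfloor(n+1)/2\rfloor}\neq 0$ and thus $a\geq 2\lfloor(n+1)/2\rfloor$. For the reverse inequality, one checks inductively from the recurrence that $L_{n+1}$ has filtration degree at most $n+1$ (since $\theta L_{k,n}$ and $t^2L_{k-1,n}$ have degrees at most $k+1$ once $L_{j,n}$ has degree at most $j$); so any nonzero term $ct^i\theta^j$ satisfies $i\leq n+1$, and parity gives $i\leq 2\lfloor(n+1)/2\rfloor$. Hence $a=2\lfloor(n+1)/2\rfloor$.

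I do not expect a real obstacle. The only point that needs care is to make sure that the leading $\overline{t}$-monomial in the symbol actually certifies a nonzero coefficient $P_{2\lfloor(n+1)/2\rfloor}$ in $L_{n+1}$ itself (rather than being cancelled by lower-filtration contributions), but this is immediate from the definition of the associated graded ring: terms of strictly smaller filtration degree cannot interfere with terms of maximal total degree.
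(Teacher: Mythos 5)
Your proof is correct and follows essentially the same route as the paper: both halves rest on the recurrence \eqref{symmetric_of_bessel_recurrence} together with the leading-term information of Proposition \ref{leading-term} (your coefficient identity $P_0^{(k+1)}=\theta P_0^{(k)}$ is just the explicit form of the paper's observation that the $\theta$-leading term is $\theta^{n+1}$, sitting in $t^0$). If anything, your argument is slightly more careful on the point the paper dismisses with ``we easily see'': you certify that $P_{2\lfloor(n+1)/2\rfloor}\neq 0$ by reading off the top $\overline{t}$-monomial of the homogeneous symbol \eqref{result}, which cleanly rules out cancellation of the top $t$-degree term in the recurrence.
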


\begin{proof}
By the recurrence \eqref{symmetric_of_bessel_recurrence}, if we set $\deg t=1$ and $\deg \theta=0$, we easily see that $L_{j}$ has degree $2\left\lfloor\frac{j}{2}\right\rfloor$ by the interchanging relation $\theta t=t+t\theta$. Thus, we have $a=2\left\lfloor\frac{n+1}{2}\right\rfloor$. On the other hand, if we set $\deg t=0$ and $\deg \theta=1$, we see that the leading term of $L_{k+1}$ is given by $\theta^{k+1}$. Therefore, we conclude that $b=0$ by Proposition \ref{leading-term}.
\end{proof}

\subsection{Two-scale Bessel moments}\label{section_period_determinant}

From now on, we take for granted the properties of modified Bessel functions $I_0(t), K_0(t)$ in the treatise \cite{MR1349110}.

Recall the Bessel moments ${\mathrm{IKM}}_{k}(a,b)$ given in section \ref{section_Bessel_moments}. For $r\in\mathbb{Z}_{\geq 1}$,
define the two $r\times r$ matrices
\[ M_r = \begin{pmatrix}
{\mathrm{IKM}}_{2r-1}(i-1,2j-2)
\end{pmatrix}_{1\leq i,j\leq r},
\quad
N_r = \begin{pmatrix}
{\mathrm{IKM}}_{2r}(i-1,2j-2)
\end{pmatrix}_{1\leq i,j\leq r}. \]
We aim to determine the two scalars $\det M_r$, $\det N_r$
adapting the inductive methods explored by Zhou \cite{MR3824579}.

For the initial values,
we have \cite[\S 13.21, Eq.(8)]{MR1349110}
\begin{equation}\label{eq:M_1}
M_1 = \int_0^\infty K_0(t)\,{\mathrm{d}} t = \frac{\pi}{2},
\end{equation}
and, by \cite[\S 13.72]{MR1349110},
one has
\begin{align*}
N_1 = \int_0^\infty K_0^2(t)\,{\mathrm{d}} t
&= \frac{1}{2}\int_0^\infty\int_{-\infty}^\infty\int_{-\infty}^\infty
	e^{-2t\cosh x\cosh y} \,{\mathrm{d}} x{\mathrm{d}} y{\mathrm{d}} t \\
&= \frac{1}{4}\int_{-\infty}^\infty\int_{-\infty}^\infty
	\frac{{\mathrm{d}} x{\mathrm{d}} y}{\cosh x\cosh y} \\
&= \frac{\pi^2}{4}.
\end{align*}

\medskip
For $r\in\mathbb{Z}_{\geq 0}$,
let $\omega_{2r+1}(x)$ be the Wronskian
of the $(2r+1)$ functions $f_i(x)$
\begin{equation}\label{eq:f_i}
f_i(x) = \begin{cases}
\int_0^\infty I_0(xt)I^{i-1}_0(t)K_0^{2r-i+1}(t) \,{\mathrm{d}} t, & 1\leq i\leq r, \\
\int_0^\infty K_0(xt)I^{i-r-1}_0(t)K_0^{3r-i+1}(t) \,{\mathrm{d}} t, & r < i \leq 2r+1.
\end{cases}
\end{equation}
The functions $f_i$ are well-defined and analytic on the interval $(0,2)$
and hence so is $\omega_{2r+1}$.
In particular,
$\omega_1(x) = \int_0^\infty K_0(xt) \,{\mathrm{d}} t = \frac{\pi}{2x}$
by \eqref{eq:M_1}.

For $r\in\mathbb{Z}_{\geq 1}$,
let $\omega_{2r}(x)$ be the Wronskian of the $2r$ functions $g_i(x)$
where
\[ g_i(x) = \begin{cases}
\int_0^\infty I_0(xt)I^{i-1}_0(t)K_0^{2r-i}(t) \,{\mathrm{d}} t, & 1\leq i\leq r, \\
\int_0^\infty K_0(xt)I^{i-r-1}_0(t)K_0^{3r-i}(t) \,{\mathrm{d}} t, & r < i \leq 2r.
\end{cases}\]
All entries in the Wronskian matrix
are well-defined analytic functions on the interval $(0,1)$
and so is $\omega_{2r}(x)$.

\begin{proposition}\label{prop:joint_pmatrix_minus}The determinant $\omega_{k}(x)$ and its evaluation at $x=1$ are given by the following formulae:
\begin{enumerate}
\item
For $r\in\mathbb{Z}_{\geq 1}$,
\begin{align*}
\omega_{2r+1}(x) &= \frac{(-1)^{\frac{r(r+1)}{2}}}{2}
\left[ \frac{1}{x^2}
\prod_{i=1}^r \frac{(2i)^2}{(2i)^2-x^2}\right]^{\frac{2r+1}{2}}
\Gamma\Big(\frac{r+1}{2}\Big)^2 \big(\det N_r\big)^2, \\
\omega_{2r+1}(1) &= (-1)^{r(r+1)/2} \det M_r \cdot \det M_{r+1}.
\end{align*}
\item
For $r\in\mathbb{Z}_{\geq 2}$,
\begin{align*}
\omega_{2r}(x) &= (-1)^{\frac{r(r+1)}{2}}
\left[ \frac{1}{x}
\prod_{i=1}^r \frac{(2i-1)^2}{(2i-1)^2-x^2}\right]^r
\big(\det M_r\big)^2, \\
\lim_{x\to 1^-} 2^r(1-x)^r\omega_{2r}(x) &=
(-1)^{r(r+1)/2} (r-1)! \det N_{r-1} \cdot \det N_r.
\end{align*}
\end{enumerate}
\end{proposition}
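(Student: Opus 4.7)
The plan is to compute each Wronskian $\omega_k(x)$ by following the Wronskian method of Zhou \cite{MR3824579}: produce a common linear ODE in $x$ (a Vanhove / Picard--Fuchs operator) satisfied by the constituent integrals, apply Abel--Liouville to determine $\omega_k(x)$ up to a multiplicative constant, fix that constant by small-$x$ asymptotic analysis, and recognise the evaluation at $x=1$ as a product of Bessel-moment determinants. The appendix's structural analysis of the symmetric power $L_k$ of the modified Bessel operator (Proposition \ref{leading-term} and the recurrence \eqref{symmetric_of_bessel_recurrence}) supplies all the input.

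First I would exhibit a single linear operator $V_{2r+1}$ in $x$ of order $2r+1$ annihilating every $f_i$. Since each integrand carries a factor $\phi_i(t) = I_0^a(t)K_0^{2r-a}(t)$ killed by $L_{2r+1,t}$, integration by parts in $t$ applied to $\int_0^\infty L_{2r+1,t}\phi_i \cdot K(xt)\,\mathrm{d}t = 0$ transfers the differential operator onto the kernel $K(xt) \in \{I_0(xt), K_0(xt)\}$; after using $\theta_t K(xt) = \theta_x K(xt)$ and the Bessel identity $\theta_x^2 K(xt) = x^2 t^2 K(xt)$ to eliminate $t$, one obtains an operator $V_{2r+1}$ in $x$ alone. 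Proposition \ref{leading-term} identifies its principal symbol, giving leading coefficient proportional to $x\prod_{i=1}^{r}\bigl((2i)^2 - x^2\bigr)$, and the recurrence \eqref{symmetric_of_bessel_recurrence} pins down the sub-leading coefficient; Abel--Liouville then integrates to $\omega_{2r+1}(x) = C\cdot\bigl[x^{-2}\prod_i (2i)^2/((2i)^2 - x^2)\bigr]^{(2r+1)/2}$, matching the stated shape. The analogous even-case argument gives $\omega_{2r}(x) = C'\cdot\bigl[x^{-1}\prod_i (2i-1)^2/((2i-1)^2 - x^2)\bigr]^{r}$.

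Next I would fix the constants $C, C'$ by asymptotic analysis as $x\to 0^+$, using $I_0(xt) \to 1$ and $K_0(xt) \sim -\log(xt/2) - \gamma$: this collapses the Wronskian to a determinant of Mellin-type moments of $I_0^a K_0^{2r-a}$ products, which factors as $(\det N_r)^2$ for $\omega_{2r+1}$ and $(\det M_r)^2$ for $\omega_{2r}$, with a gamma factor arising from the logarithmic behaviour of the $K_0(xt)$ columns in the former case. For the evaluation at $x=1$, using $\theta_x^j K(xt)\bigm|_{x=1} = \theta_t^j K(t)$ and Bessel's identity $\theta_t^2 I_0 = t^2 I_0$ (and similarly for $K_0$), each column of the Wronskian matrix at $x=1$ rewrites as a $\mathbb{Q}$-linear combination of Bessel moments $\mathrm{IKM}_{2r+1}(a,2b)$; the upper $r$ and lower $r+1$ rows separate cleanly into the blocks $M_r$ and $M_{r+1}$, yielding $\omega_{2r+1}(1) = (-1)^{r(r+1)/2}\det M_r\,\det M_{r+1}$ after sign bookkeeping. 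In the even case, $x=1$ is a pole of order $r$ of the closed-form Wronskian, so one multiplies by $2^r(1-x)^r$ and takes the limit; Taylor-expanding each $K_0(xt)$ kernel around $x=1$ and extracting the finite part produces the combinatorial factor $(r-1)!$ along with the product $\det N_{r-1}\cdot\det N_r$.

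The principal technical obstacles are: (a) verifying that the \emph{same} operator $V_k$ annihilates both the $I_0$-type and the $K_0$-type integrals, which requires checking that all boundary terms in the iterated integration by parts vanish at $t=0$ (where $K_0$ has a logarithmic singularity) and $t=\infty$ (where $I_0$ grows exponentially, mitigated only by the $K_0$ factors present); (b) extracting the sub-leading coefficient of $V_k$ from the recurrence \eqref{symmetric_of_bessel_recurrence} precisely enough to recover the half-integer exponent $(2r+1)/2$ in Abel--Liouville; and (c) the delicate limit analysis at $x=1$ for $\omega_{2r}$, where vanishing minors in the Wronskian matrix must combine with the pole of the closed-form expression to yield exactly the claimed finite part. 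The overall scheme matches Zhou's approach for the odd-argument case, adapted here to the $t^{2j-2}$ (even-power) Bessel moments that arise from our $\sqrt{z}\Sym^k\mathrm{Kl}_2$ setting.
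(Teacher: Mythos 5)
Your proposal follows essentially the same route as the paper's proof: the signed adjoint of the symmetric-power operator $L_{n+1}$ is converted via integration by parts and the identities $\theta_tF(xt)=\theta_xF(xt)$, $t^2F(xt)=x^{-2}\theta_x^2F(xt)$ into the Vanhove operator $V_{n+1}$ whose two leading coefficients (Lemma \ref{lemma:Vanhove_op}, resting on Proposition \ref{leading-term}) give the Abel--Liouville ODE \eqref{odeequ} for $\omega_{n+1}$, the constant is fixed by the block asymptotics of $\Omega_{n+1}(x)$ as $x\to 0^+$, and the values at $x=1$ (resp.\ the limit as $x\to 1^-$) come from row/column operations together with the Wronskian relation \eqref{eq:Wronskian_IK} producing block-triangular matrices with blocks $M_r,M_{r+1}$ (resp.\ $N_{r-1},N_r$). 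One small correction: in the even case the pole of order $r$ at $x=1$ arises from the $I_0(xt)$ columns (via $\int_0^\infty I_0(xt)I_0^{a-1}K_0^a t^c\,\mathrm{d}t\sim (c-a)!/(2^a(1-x)^{c-a+1})$ for $c\geq a$, estimates \eqref{eq:IxtIK_2}--\eqref{eq:I'xtIK_2}), not from Taylor-expanding the $K_0(xt)$ kernels as you state.
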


The above proposition leads to the recursive formulae
\begin{align}
\nonumber
\det M_r \cdot \det M_{r+1}
&= \frac{1}{2}
\left[\frac{2^rr!\sqrt{2r+1}}{(2r+1)!!}\right]^{2r+1}
\Gamma\Big(\frac{r+1}{2}\Big)^2 \big(\det N_r\big)^2
& (r\geq 1), \\
\label{eq:N-M}
\det N_{r-1} \cdot \det N_r
&= \frac{2^r}{(r-1)!}
\left[\frac{(2r-1)!!\sqrt{2r}}{2^rr!}\right]^{2r}\big(\det M_r\big)^2
& (r\geq 2).
\end{align}
With the initial data $M_1 = \frac{\pi}{2}, N_1 = \frac{\pi^2}{4}$ and the relation
\[
\Gamma\left(\frac{r}{2}\right)\Gamma\left(\frac{r+1}{2}\right)=\frac{\left(r-1\right)!}{2^{r-1}}\sqrt{\pi},
\]
one immediately obtains the following results by induction.

\begin{corollary}\label{period_determinant}
For positive integers $r$, we have
\begin{align*}
\det M_{r}&=\sqrt{\pi}^{r(r+1)}\sqrt{2}^{r(r-3)}\prod_{a=1}^{r-1}\frac{a^{r-a}}{\sqrt{2a+1}^{2a+1}}, \\
\det N_{r}&=\frac{1}{\Gamma\left(\frac{r+1}{2}\right)}\frac{\sqrt{\pi}^{(r+1)^{2}}}{\sqrt{2}^{r(r+3)}}\prod_{a=1}^{r-1}\frac{(2a+1)^{r-a}}{(a+1)^{a+1}}.
\end{align*}
In particular,
the two scalars $\sqrt{(2r-1)!!}\,\pi^{-m_r}\det M_r$
and $\pi^{-n_r}\det N_r$
are positive rational numbers, where $m_{r}=\frac{r(r+1)}{2}$ and $n_{r}=\left\lfloor\frac{(r+1)^{2}}{2}\right\rfloor$.
\end{corollary}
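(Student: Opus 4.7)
The plan is to prove both formulae simultaneously by strong induction on $r$, using the two recursive relations of Proposition \ref{prop:joint_pmatrix_minus}: the $N$-recursion \eqref{eq:N-M} and its companion
\[ \det M_r \cdot \det M_{r+1}
= \frac{1}{2}\left[\frac{2^r r!\sqrt{2r+1}}{(2r+1)!!}\right]^{2r+1}\Gamma\!\Big(\frac{r+1}{2}\Big)^{2} \big(\det N_r\big)^{2}. \]
Given $\det M_r$ and $\det N_{r-1}$, the $N$-recursion determines $\det N_r$ uniquely, and then the $M$-recursion determines $\det M_{r+1}$; so the two sequences are built in tandem, starting from the initial values $\det M_1=\pi/2$ and $\det N_1=\pi^{2}/4$ already computed in the excerpt.

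For the base case $r=1$, the empty products in both claimed formulae collapse to $1$, yielding $\sqrt{\pi}^{2}\sqrt{2}^{-2}=\pi/2$ and $\sqrt{\pi}^{4}\sqrt{2}^{-4}/\Gamma(1)=\pi^{2}/4$, in agreement with the initial data.

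For the inductive step, first substitute the hypothesised expressions for $\det M_r$ and $\det N_{r-1}$ into \eqref{eq:N-M}, expand $(2r-1)!!=(2r)!/(2^{r}r!)$, and extend $\prod_{a=1}^{r-2}(2a+1)^{r-1-a}$ to $\prod_{a=1}^{r-1}(2a+1)^{r-a}$ by one new factor to recover the claimed formula for $\det N_r$. Then feed $\det M_r$ together with this new $\det N_r$ into the $M$-recursion; the decisive cancellation uses the duplication identity $\Gamma(r/2)\Gamma((r+1)/2)=(r-1)!\sqrt{\pi}/2^{r-1}$ cited in the excerpt to eliminate the factor $\Gamma((r+1)/2)^{2}$, after which analogous re-indexing of $\prod_{a=1}^{r-1}a^{r-a}/\sqrt{2a+1}^{2a+1}$ produces the formula for $\det M_{r+1}$. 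The positivity and rationality claim in the \emph{In particular} clause then follows by inspection: $\sqrt{(2r-1)!!}$ clears the lone irrationality $\sqrt{2r-1}^{\,2r-1}$ appearing in the $a=r-1$ term of the product for $\det M_r$, while $\Gamma((r+1)/2)^{-1}$ combined with $\sqrt{\pi}^{(r+1)^{2}}$ produces exactly $\pi^{n_r}$ times a positive rational, regardless of the parity of $r$.

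The main obstacle is the exponent bookkeeping --- matching the powers of $\sqrt{\pi}$ and $\sqrt{2}$ (respectively $r(r+1)$ and $r(r-3)$ for $\det M_r$, and $(r+1)^{2}$ and $-r(r+3)$ for $\det N_r$) against the shifts imposed by the two recursions. I would handle this by writing both sides of each equality in the form $\pi^{\alpha}2^{\beta}\cdot(\text{product over }a)$ and verifying the finite-difference equations that $\alpha$, $\beta$, and the two products satisfy under $r\mapsto r+1$; everything else reduces to algebraic manipulation of factorials and double factorials.
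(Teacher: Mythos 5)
Your proposal is correct and follows essentially the same route as the paper, which likewise deduces the two closed forms by tandem induction from the recursions of Proposition \ref{prop:joint_pmatrix_minus}, the initial data $\det M_1=\pi/2$, $\det N_1=\pi^2/4$, and the duplication identity $\Gamma(r/2)\Gamma((r+1)/2)=(r-1)!\sqrt{\pi}/2^{r-1}$. One minor imprecision: in the rationality claim the irrational contribution to $\pi^{-m_r}\det M_r$ is not a ``lone'' factor from the $a=r-1$ term but the full product $\prod_{a=1}^{r-1}\sqrt{2a+1}=\sqrt{(2r-1)!!}$, each factor $(2a+1)^{-(2a+1)/2}$ contributing one $\sqrt{2a+1}$; the conclusion is nevertheless as you state.
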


\subsection{The Vanhove operators}
The adjoint $L_{n+1}^*$ of $L_{n+1}$ is derived under the convolution
$(t,\partial_t) \mapsto (t,-\partial_t)$
(so $\theta \mapsto -(\theta+1)$)
and hence the leading term
of the signed adjoint $\Lambda_{n+1} = (-1)^{n+1}L_{n+1}^*$
equals $\overline{L}_{n+1}(\overline{\theta},\overline{t})$ by Proposition \ref{leading-term}.
For $F(xt) = I_0(xt), K_0(xt)$
and $G(t) = I_0^a(t)K_0^{n-a}(t)$,
we have, by integration by parts,
\[ \int_0^\infty (\Lambda_{n+1}F(xt)) G(t) \,{\mathrm{d}} t
= (-1)^{n+1}\int_0^\infty F(xt) (L_{n+1}G(t)) \,{\mathrm{d}} t = 0. \]

The \textit{Vanhove operator}
$V_{n+1} \in \mathbb{Q}\left\langle \partial_x,x^{\pm 1} \right\rangle$
is of order $(n+1)$ such that $V_{n+1}F(xt) = \Lambda_{n+1}F(xt)$.
So one has $V_{n+1}f_i = 0$ for $f_i(x)$ in \eqref{eq:f_i}
and consequently $\omega_{n+1}(x)$
satisfies a first order linear differential equation (See \eqref{odeequ} below).

\begin{lemma}\label{lemma:Vanhove_op}
Let $\lambda_{n+1}(x) = \overline{L}_{n+1}(1,x^{-1}) \in \mathbb{Q}[x^{-1}]$
of order $2\left\lfloor\frac{n+1}{2}\right\rfloor$ with respect to $x^{-1}$.
Let $\theta_x = x\partial_x$.
One has
\begin{align*}
V_{n+1}
&= \lambda_{n+1}(x)\theta_x^{n+1}
+ (n+1)\left[ \lambda_{n+1}(x) + \frac{x\lambda_{n+1}'(x)}{2} \right]\theta_x^n
+ \delta_1 \\
&= x^{n+1}\lambda_{n+1}(x)\partial_x^{n+1}
+ \frac{n+1}{2}x^n\left[ (n+2)\lambda_{n+1}(x) + x\lambda_{n+1}'(x) \right]\partial_x^n
+ \delta_2
\end{align*}
where $\delta_1,\delta_2$ are of order at most $(n-1)$
with respect to $\partial_x$ in $\mathbb{Q}\langle\partial_x,x^{\pm 1}\rangle$.
\end{lemma}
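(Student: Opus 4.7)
My plan is to derive $V_{n+1}$ from $\Lambda_{n+1}$ by using the modified Bessel equation to translate $t$-factors into $(\theta_x, x)$-operators. First, I will put $\Lambda_{n+1}$ into the normal form $\Lambda_{n+1} = \sum_{j\geq 0} t^{2j}\tilde{p}_j(\theta)$ with $\tilde{p}_j\in\mathbb{Q}[\theta]$; this is possible because $\Lambda_{n+1}\in\mathbb{Q}\langle t^2,\theta\rangle$, since the adjoint preserves this subalgebra ($t^*=t$ and $\theta^*=-\theta-1$ both lie in it). A short induction on the recurrence \eqref{symmetric_of_bessel_recurrence} shows that every nonzero term $t^{2j}p_j(\theta)$ in the normal form of $L_{n+1}$ satisfies $2j+\deg p_j = n+1$; the same holds for $\tilde{p}_j$, whose leading coefficient $c_j$ coincides with the coefficient of $\overline{t}^{2j}\overline{\theta}^{n+1-2j}$ in $\overline{L}_{n+1}$.

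Then, on $F(xt)$ with $F$ annihilated by $\theta_s^2-s^2$, the Bessel identity at $s=xt$ gives $t^2F(xt)=x^{-2}\theta_x^2F(xt)$; iterating with $\theta_x x^{-2} = x^{-2}(\theta_x-2)$ yields $t^{2j}F(xt)=x^{-2j}\prod_{i=0}^{j-1}(\theta_x-2i)^2 F(xt)$. Combined with $\theta_t F(xt) = \theta_x F(xt)$, commuting $t^{2j}$ past $\theta_x$, and using $\tilde{p}_j(\theta_x)x^{-2j}=x^{-2j}\tilde{p}_j(\theta_x-2j)$, this produces the closed form
\[
V_{n+1}=\sum_{j\geq 0}x^{-2j}\,\tilde{p}_j(\theta_x-2j)\prod_{i=0}^{j-1}(\theta_x-2i)^2.
\]
The coefficient of $\theta_x^{n+1}$ is immediately $\sum c_j x^{-2j} = \lambda_{n+1}(x)$. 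For the coefficient of $\theta_x^n$, I will expand the next-to-leading terms of $(\theta_x-2j)^{n+1-2j}$ (Taylor shift $-2j(n+1-2j)$) and of $\prod_i(\theta_x-2i)^2$ (Newton sum $-2j(j-1)$), and combine them with the subleading $\tilde{c}_j'$ of $\tilde{p}_j$ (computed from the adjoint shift $\theta\mapsto -\theta-2j-1$); the net contribution from each $j$ is $x^{-2j}[\tilde{c}_j'-2j(n-j)c_j]$.

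The subsequent simplification requires the structural identity $c_j'=jm_jc_j$ (with $m_j=n+1-2j$), where $c_j, c_j'$ are the two highest coefficients of $p_j$ in $L_{n+1}$; equivalently, each $p_j(\theta-j)$ lies in $\theta^{m_j\bmod 2}\mathbb{Q}[\theta^2]$, a parity property specific to the symmetric-power-of-Bessel operator. Granting this, the coefficient of $\theta_x^n$ simplifies via $\sum 2jc_jx^{-2j} = -x\lambda_{n+1}'(x)$ to $(n+1)[\lambda_{n+1}+x\lambda_{n+1}'/2]$, matching the stated first form of $V_{n+1}$; the $\partial_x$-form then follows from the standard conversion $\theta_x^k = x^k\partial_x^k+\binom{k}{2}x^{k-1}\partial_x^{k-1}+O(\partial_x^{k-2})$. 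The main obstacle is to establish the structural identity $c_j'=jm_jc_j$, which I plan to verify by a joint induction on the recurrence \eqref{symmetric_of_bessel_recurrence} tracking both the leading and subleading coefficients of each $p_j^{(k)}$ simultaneously.
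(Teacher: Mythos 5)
Your strategy is genuinely different from the paper's: the paper does not compute $\Lambda_{n+1}$ in normal form at all, but instead imports from Vanhove and Zhou the existence and the two leading terms of an auxiliary operator $\widetilde L_{n-1}$ satisfying $t\widetilde L_{n-1}F(xt)=\Lambda_{n+1}\frac{F(xt)}{t}$, and then obtains $V_{n+1}$ by the shift $\theta_t\tfrac1t=\tfrac1t(\theta_t-1)$ together with $t^2F(xt)=x^{-2}\theta_x^2F(xt)$. Your bookkeeping is correct as far as it goes: the closed form $V_{n+1}=\sum_j x^{-2j}\tilde p_j(\theta_x-2j)\prod_{i=0}^{j-1}(\theta_x-2i)^2$ is right, the adjoint shift gives $\tilde c_j'=m_j(2j+1)c_j-c_j'$, and granting $c_j'=jm_jc_j$ the coefficient of $\theta_x^n$ does collapse to $(n+1)\bigl[\lambda_{n+1}+\tfrac{x\lambda_{n+1}'}{2}\bigr]$ (I verified the arithmetic $(n+1-2j)(j+1)=n+1+j(n-1-2j)$). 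So everything reduces, as you say, to the structural identity.

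The gap is that your proposed proof of that identity will not work as described. The relation $c_j'=jm_jc_j$ (equivalently the parity property $p_j(\theta-j)\in\theta^{m_j\bmod 2}\mathbb{Q}[\theta^2]$) holds for the terminal operator $L_{n+1,n}$ but is \emph{not} an invariant of the recurrence \eqref{symmetric_of_bessel_recurrence}: it already fails at the intermediate stage. For $n=3$ one computes $L_{3,3}=\theta^3-7t^2\theta-6t^2$, so $p_1^{(3)}=-7\theta-6$ has subleading coefficient $-6$ while $j\,(\deg p_1^{(3)})\,c_1^{(3)}=1\cdot1\cdot(-7)=-7$; equivalently $p_1^{(3)}(\theta-1)=-7\theta+1$ is not odd. (The identity is then restored at $L_{4,3}=\theta^4-10t^2\theta^2-20t^2\theta-12t^2+9t^4$.) Consequently a ``joint induction tracking the leading and subleading coefficients'' has nothing to propagate: the hypothesis you would want to carry from step $k$ to step $k+1$ is false at step $k$. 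To close the argument you would need either closed-form expressions for both $c_j^{(k)}$ and $c_j'^{(k)}$ at every intermediate level (a subleading-order extension of Lemma \ref{keylemma}, which is a nontrivial additional computation), or a proof of the identity by other means --- most naturally by observing that it is exactly the statement that $L_{n+1}$ is $(-1)^{n+1}$-self-adjoint for the anti-involution $\theta\mapsto-\theta$, $t\mapsto t$ (the formal adjoint for the measure $\mathrm{d}t/t$), i.e.\ $p_j(-\theta-2j)=(-1)^{n+1}p_j(\theta)$ for all $j$, which one can deduce from the self-duality of the symmetric power (the symmetry of the solution basis $I_0^aK_0^{n-a}$) rather than from the recurrence. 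Until one of these is supplied, the key input of your proof is unestablished.
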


\begin{proof}

By Vanhove \cite{MR3330287},
there exists $\widetilde{L}_{n-1} \in \mathbb{Q}\langle\partial_x,x^{\pm 1}\rangle$ of order $(n-1)$
such that
\[ t\widetilde{L}_{n-1}F(xt) = \Lambda_{n+1}\frac{F(xt)}{t}. \]
The operator $\widetilde{L}_{n-1}$ is of the form
(\cite[Eq.\,(4.29)]{MR3824579})
\[ \widetilde{L}_{n-1} = x^2\lambda(x)\theta_x^{n-1}
+ x^2\left[ 2(n-1)\lambda(x) + \frac{n-1}{2}x\lambda'(x) \right]\theta_x^{n-2}
+ \widetilde\delta \]
where $\widetilde\delta$ is of order at most $(n-3)$
with respect to $\partial_x$ in $\mathbb{Q}\langle\partial_x,x^{\pm 1}\rangle$\footnote{Comparing $\widetilde{L}_{n-1}(\theta_x)$
with Zhou's Vanhove operator $\widetilde{L}_{n-1}(\theta_u)$,
we set his variable $u = x^2$
and multiply $\widetilde{L}_{n-1}(\theta_u)$ by $2^{n-1}$.}.

Set
\[ \Delta_n(\theta_t)
= \Lambda_{n+1}(\theta_t) - \Lambda_{n+1}(\theta_t -1). \]
Since
$\theta_t \frac{1}{t} = \frac{1}{t}(\theta_t -1)$ in $\mathbb{Q}\left\langle\partial_t,t\right\rangle$,
we have
\begin{align*}
\Lambda_{n+1}(\theta_t)F(xt)
&= t\Lambda_{n+1}(\theta_t)\frac{F(xt)}{t} + \Delta_n(\theta_t)F(xt) \\
&= \Big[ t^2\widetilde{L}_{n-1}(\theta_x) + \Delta_n(\theta_t) \Big]F(xt).
\end{align*}
Since
$t^2F(xt) = \frac{1}{x^2}\theta_x^2F(xt)$,
we have
\begin{align*}
t^2\widetilde{L}_{n-1}(\theta_x)F(xt)
&= \widetilde{L}_{n-1}(\theta_x)\frac{1}{x^2}\theta_x^2F(xt) \\
&= \frac{1}{x^2}\widetilde{L}_{n-1}(\theta_x-2)\theta_x^2F(xt)
\end{align*}
and the differential operator reads
\[ \lambda(x)\theta_x^{n+1} + \frac{n-1}{2}x\lambda'(x)\theta_x^n + \delta_3 \]
where $\delta_3$ is of order at most $(n-1)$
with respect to $\partial_x$ in $\mathbb{Q}\langle\partial_x,x^{\pm 1}\rangle$.

On the other hand, since
$\theta_tF(xt) = \theta_xF(xt)$ and by Proposition \ref{leading-term},
we have
\[ \Delta_n(\theta_t)F(xt) =\left[\Lambda_{n+1}\left(\theta\right)-\Lambda_{n+1}\left(\theta-1\right)\right]F\left(xt\right)=
\left[ \left( (n+1)\lambda(x) + x\lambda'(x)\right)\theta_x^n
+ \delta_4 \right] F(xt) \]
where $\delta_4$ is of order at most $(n-1)$
with respect to $\partial_x$ in $\mathbb{Q}\langle\partial_x,x^{-1}\rangle$.
Therefore the leading two terms of $V_{n+1}$ are determined.
\end{proof}

\subsubsection*{Rationality of $\omega_{n+1}(x)$}
Lemma \ref{lemma:Vanhove_op} yields
\begin{equation}\label{odeequ}
\omega_{n+1}'(x) = -\frac{n+1}{2x}
\left[ (n+2) + \frac{x\lambda_{n+1}'(x)}{\lambda_{n+1}(x)} \right] \omega_{n+1}(x).
\end{equation}
Since $\omega_{n+1}(x)$ takes real values on $(0,1)$,
one obtains
\[ \omega_{n+1}(x) = C_{n+1}
\left[ (-1)^{\left\lfloor\frac{n+1}{2}\right\rfloor}x^{n+2}\lambda_{n+1}(x) \right]^{-\frac{n+1}{2}} \]
for some real constant $C_{n+1}$
for each $n\in\mathbb{Z}_{\geq 0}$.
We shall determine $C_{n+1}$
by investigating the limiting behavior of $\omega_{n+1}(x)$
as $x \to 0^+$.

\subsection{Singularities of $\omega_{n+1}(x)$}

For $F(xt) = I_0(xt)$ or $K_0(xt)$,
we have
\[ \partial_xF(xt) = tF'(xt),
\quad
\partial_x^2F(xt) = -\frac{t}{x} F'(xt) + t^2F(xt). \]
So $\omega_{n+1}(x)$ coincides with the determinant of the matrix
$\Omega_{n+1}(x)$ of size $(n+1)$ whose $(i,j)$-entry is
\[\begin{cases}
\int_0^\infty I_0(xt) I_0^{j-1}(t) K_0^{n-j+1}(t) t^{i-1} \,{\mathrm{d}} t,
& 1\leq j\leq \left\lfloor\frac{n+1}{2}\right\rfloor, i =1,3,\cdots, 2\left\lfloor\frac{n}{2}\right\rfloor +1, \\
\int_0^\infty tI_0'(xt) I_0^{j-1}(t) K_0^{n-j+1}(t) t^{i-2} \,{\mathrm{d}} t,
& 1\leq j\leq \left\lfloor\frac{n+1}{2}\right\rfloor, i =2,4,\cdots, 2\left\lfloor\frac{n+1}{2}\right\rfloor, \\
\int_0^\infty K_0(xt) I_0^{j-r-1}(t) K_0^{n-j+r+1}(t) t^{i-1} \,{\mathrm{d}} t,
& \left\lfloor\frac{n+1}{2}\right\rfloor< j\leq n+1, i =1,3,\cdots, 2\left\lfloor\frac{n}{2}\right\rfloor+1, \\
\int_0^\infty tK_0'(xt) I_0^{j-r-1}(t) K_0^{n-j+r+1}(t) t^{i-2} \,{\mathrm{d}} t,
& \left\lfloor\frac{n+1}{2}\right\rfloor< j\leq n+1, i =2,4,\cdots, 2\left\lfloor\frac{n+1}{2}\right\rfloor.
\end{cases} \]

\subsubsection*{Properties of $I_0(t)$ and $K_0(t)$}
We collect some properties of the modified Bessel functions
$I_0(t)$ and $K_0(t)$
in order to obtain information of $\omega_{n+1}(x)$
as $x\to 0^+, 1^-$.

The function $I_0(t)$ is entire and even;
it is real and increasing on the half line $[1,\infty)$.
The function $K_0(t)$ has a logarithmic pole at $x=0$;
it is real and decreasing on $(0,\infty)$.
On the half plane $\Re(t)>0$,
we have the asymptotic approximations
\[ I_0(t) = \frac{e^t}{\sqrt{2\pi t}}\left( 1+O\Big(\frac{1}{t}\Big) \right),
\quad
K_0(t) = \sqrt{\frac{\pi}{2t}}e^{-t}\left( 1+O\Big(\frac{1}{t}\Big) \right) \]
as $t\to\infty$.
In particular,
for a positive integer $a$,
\[ [I_0(t)K_0(t)]^a - \frac{1}{(2t)^a} = O\Big(\frac{1}{t^{a+1}}\Big) \]
as $t \to \infty$ along the real line.
One has the boundedness
\[ \sup_{t>0}\frac{|tK'_0(t) + 1|}{t(1+|\log t|)} < \infty. \]
For $c\in\mathbb{Z}_{\geq 0}$,
one has the evaluation \cite[\S 13.21, Eq.(8)]{MR1349110}
\[ \int_0^\infty K_0(t)t^c\,{\mathrm{d}} t
= 2^{c-1}\Gamma\Big(\frac{c+1}{2}\Big)^2. \]

\subsubsection*{Integrations}
With the data collected above,
we list some consequences
for the integrals that appear in the entries of the matrix $\Omega_{n+1}(x)$.

For $0\leq a<b$ and $c\geq 0$,
one obtains
\begin{align}
\label{eq:simple_Kxt}
\int_0^\infty K_0(xt)I_0^a(t)K_0^b(t)t^c \,{\mathrm{d}} t
&= O(\log x), \\
\label{eq:simple_I'xt}
\int_0^\infty I_0'(xt)I_0^a(t)K_0^b(t)t^c \,{\mathrm{d}} t
&= O(x)
\end{align}
and
\begin{align}
\nonumber
\int_0^\infty tK'_0(xt)I_0^a(t)K_0^b(t)t^c \,{\mathrm{d}} t
&= \frac{-1}{x}\left[ \int_0^\infty I_0^a(t)K_0^b(t)t^c \,{\mathrm{d}} t
 -\int_0^\infty \big(xtK_0'(xt)+1\big) I_0^a(t)K_0^b(t)t^c \,{\mathrm{d}} t \right] \\
\label{eq:simple_K'xt}
&= \frac{-1}{x}\int_0^\infty I_0^a(t)K_0^b(t)t^c \,{\mathrm{d}} t + O(\log x)
\end{align}
as $x \to 0^+$.
For $0\leq c\leq a$ and as $x\to 0^+$, we thus have
\begin{align}
\label{eq:KxtIK_1}
\int_0^\infty K_0(xt)I_0^a(t)K_0^a(t)t^c \,{\mathrm{d}} t
&= O\Big(\int_0^\infty K_0(xt) \,{\mathrm{d}} t\Big)
= O\Big(\frac{1}{x}\Big), \\
\label{eq:K'xtIK_1}
\int_0^\infty tK_0'(xt)I_0^a(t)K_0^a(t)t^c \,{\mathrm{d}} t
&= O\Big(\int_0^\infty tK_0'(xt) \,{\mathrm{d}} t\Big)
= O\Big(\frac{1}{x^2}\Big).
\end{align}
If $0\leq a <c$ and as $x\to 0^+$, then
\begin{align}
\nonumber
\int_0^\infty K_0(xt)I_0^a(t)K_0^a(t)t^c {\mathrm{d}} t
&= \int_0^\infty \frac{K_0(xt)t^{c-a}}{2^a} {\mathrm{d}} t
+ \int_0^\infty K_0(xt)\Big[ I_0^a(t)K_0^a(t) - \frac{1}{(2t)^a}\Big]t^c {\mathrm{d}} t \\
\label{eq:KxtIK_2}
&= \frac{2^{c-2a-1}}{x^{c-a+1}} \Gamma\Big(\frac{c-a+1}{2}\Big)^2
+ O\Big(\frac{1}{x^{c-a}}\Big), \\
\nonumber
\int_0^\infty tK_0'(xt)I_0^a(t)K_0^a(t)t^c {\mathrm{d}} t
&= \int_0^\infty \frac{tK_0'(xt)t^{c-a}}{2^a} {\mathrm{d}} t
+ \int_0^\infty tK_0'(xt)\Big[ I_0^a(t)K_0^a(t) - \frac{1}{(2t)^a}\Big]t^c {\mathrm{d}} t \\
\nonumber
&= \frac{c-a+1}{2^ax} \int_0^\infty K_0(xt)t^{c-a} \,{\mathrm{d}} t
+ O\Big(\frac{1}{x^2}\Big) \\
\label{eq:K'xtIK_2}
&= O\Big(\frac{1}{x^{c-a+2}}\Big).
\end{align}

\medskip
On the real line,
we have \cite[Lemma 4.5]{MR3824579}
\[ \lim_{x\to 1^-} \frac{\int_0^\infty I_0(xt)K_0(t) \,{\mathrm{d}} t}{-\log(1-x)}
= \frac{1}{2} \]
and for $c\in\mathbb{Z}_{\geq 0}$,
\[ \lim_{x\to 1^-}(1-x)^{c+1}\int_0^\infty I_0(xt)K_0(t)t^{c+1} \,{\mathrm{d}} t
= \frac{c!}{2}
= \lim_{x\to 1^-}(1-x)^{c+1}\int_0^\infty tI_0'(xt)K_0(t)t^c \,{\mathrm{d}} t. \]
Therefore for $a\geq 1, a> c$ and $x\to 1^-$,
one has
\begin{align}
\nonumber
\int_0^\infty I_0(xt)I_0^{a-1}(t)K_0^a(t)t^c \,{\mathrm{d}} t
&= \int_0^\infty I_0(xt)K_0(t)\big[I_0^{a-1}(t)K_0^{a-1}(t)t^c\big] \,{\mathrm{d}} t \\
\nonumber
&= O\Big(\int_0^\infty I_0(xt)K_0(t) \,{\mathrm{d}} t \Big) \\
\label{eq:IxtIK_1}
&= O\big(\log (1-x)\big), \\
\nonumber
\int_0^\infty tI_0'(xt)I_0^{a-1}(t)K_0^a(t)t^c \,{\mathrm{d}} t
&= \int_0^\infty tI_0'(xt)K_0(t)\big[I_0^{a-1}(t)K_0^{a-1}(t)t^c\big] \,{\mathrm{d}} t \\
\nonumber
&= O\Big(\int_0^\infty tI'_0(xt)K_0(t) \,{\mathrm{d}} t \Big) \\
\label{eq:I'xtIK_1}
&= O\Big(\frac{1}{1-x}\Big).
\end{align}
If $c\geq a\geq 1$ and $x\to 1^-$, then
\begin{align}
\nonumber
\int_0^\infty I_0(xt)I_0^{a-1}(t)K_0^a(t)t^c \,{\mathrm{d}} t
&\begin{aligned}[t]
= &\int_0^\infty \frac{I_0(xt)K_0(t)t^{c-a+1}}{2^{a-1}} \,{\mathrm{d}} t \\
&\hspace{20pt}+ \int_0^\infty I_0(xt)K_0(t)
	\Big[ I_0^{a-1}(t)K_0^{a-1}(t) - \frac{1}{(2t)^{a-1}}\Big]t^c \,{\mathrm{d}} t
\end{aligned} \\
\label{eq:IxtIK_2}
&= \frac{(c-a)!}{2^a(1-x)^{c-a+1}}
+ o\Big(\frac{1}{(1-x)^{c-a+1}}\Big), \\
\nonumber
\int_0^\infty tI'_0(xt)I_0^{a-1}(t)K_0^a(t)t^c \,{\mathrm{d}} t
&\begin{aligned}[t]
= &\int_0^\infty \frac{tI'_0(xt)K_0(t)t^{c-a+1}}{2^{a-1}} \,{\mathrm{d}} t \\
&\hspace{20pt}+ \int_0^\infty tI'_0(xt)K_0(t)
	\Big[ I_0^{a-1}(t)K_0^{a-1}(t) - \frac{1}{(2t)^{a-1}}\Big]t^c \,{\mathrm{d}} t
\end{aligned} \\
\label{eq:I'xtIK_2}
&= \frac{(c-a+1)!}{2^a(1-x)^{c-a+2}}
+ o\Big(\frac{1}{(1-x)^{c-a+2}}\Big).
\end{align}
Notice that the error terms in the above two formulas are
of class small $o$;
it is needed in the investigation of the limit of $\omega_{2r}$
as $x\to 1^-$ below.

\subsubsection*{Evaluation of $\omega_{2r+1}(x)$ at $x=1$}
All entries of $\Omega_{2r+1}(x)$ can be evaluated at $x=1$.
We move the $2i$-th row to row $i$ in $\Omega_{2r+1}(1)$
for $1\leq i\leq r$ and then
subtract the $(r+j+1)$-st column from the $j$-th column
of the resulting matrix for $1\leq j\leq r$.
By \eqref{eq:Wronskian_IK} on the upper-left block,
we obtain
\[ \omega_{2r+1}(1) = (-1)^{\frac{r(r+1)}{2}}
\det \begin{pmatrix}
M_r & * \\
0 & M_{r+1}
\end{pmatrix}. \]

\subsubsection*{Behavior of $\omega_{2r+1}(x)$ as $x \to 0^+$}
Fix $r\geq 1$.
We move row $(2i-1)$ of $\Omega_{2r+1}(x)$
to row $i$ for $1\leq i \leq r$,
which creates a sign $(-1)^{r(r-1)/2}$
to the determinant $\omega_{2r+1}(x)$.
As $x\to 0^+$,
the resulting matrix decomposes into
$(r,r,1)\times (r,r,1)$ blocks of the form
\[\begin{pmatrix}
N_r + o(1) & O(\log x) & O\big(\frac{1}{x^r}\big) \\
O(x) & \frac{-1}{x}N_r + O(\log x) & O\big(\frac{1}{x^r}\big) \\
O(1) & O(\log x) &
\frac{1}{2x^{r+1}}\Gamma\big(\frac{r+1}{2}\big)^2
	+ O\big(\frac{1}{x^r}\big)
\end{pmatrix}\]
by direct evaluation and \eqref{eq:simple_I'xt}
in the left three blocks,
\eqref{eq:simple_Kxt} and \eqref{eq:simple_K'xt}
in the middle,
and
\eqref{eq:KxtIK_1}, \eqref{eq:K'xtIK_1},
\eqref{eq:KxtIK_2} and \eqref{eq:K'xtIK_2}
in the last column.
The leading term of $\omega_{2r+1}(x)$,
which is of order $x^{-(2r+1)}$,
comes from the diagonal blocks
and one gets
\[ \lim_{x\to 0^+} x^{2r+1}\omega_{2r+1}(x)
= (-1)^{\frac{r(r+1)}{2}} \frac{1}{2}
	\Gamma\Big(\frac{r+1}{2}\Big)^2 {\det}^2 N_r. \]

\subsubsection*{Behavior of $\omega_{2r}(x)$ as $x \to 1^-$}
Fix $r\geq 2$.
We move $2i$-th row of $\Omega_{2r}(x)$
to row $i$ for $i=1,2,\cdots, (r-1)$
and $r$-th column to the last,
which adds a sign $(-1)^{r(r+1)/2}$
to the determinant $\omega_{2r}(x)$.
We subtract $j$-th column by $(r+j)$-th
for $j = 1,2,\cdots, (r-1)$.
As $x\to 1^-$,
the resulting matrix decomposes into
$(r-1,r,1)\times (r-1,r,1)$ blocks of the form
\[\begin{pmatrix}
N_{r-1} + o(1) & O(1) & O\big(\frac{1}{(1-x)^{r-1}}\big) \\
0 & N_r + o(1) & O\big(\frac{1}{(1-x)^{r-1}}\big) \\
O(1) & O(1) & \frac{(r-1)!}{2^r(1-x)^r} + o\big(\frac{1}{(1-x)^r}\big)
\end{pmatrix}\]
by \eqref{eq:Wronskian_IK} and direct evaluation
in the left three blocks,
direct evaluation
in the middle,
and
\eqref{eq:IxtIK_1}, \eqref{eq:I'xtIK_1},
\eqref{eq:IxtIK_2} and \eqref{eq:I'xtIK_2}
in the last column.
The leading term of $\omega_{2r}(x)$,
which is of order $(1-x)^{-r}$,
comes from the diagonal blocks.
It yields
\[ \lim_{x\to 1^-} (1-x)^r\omega_{2r}(x)
= (-1)^{\frac{r(r+1)}{2}} \frac{(r-1)!}{2^r} \det N_{r-1} \det N_r. \]

\subsubsection*{Behavior of $\omega_{2r}(x)$ as $x \to 0^+$}
Fix $r\geq 2$.
We move row $(2i-1)$ of $\Omega_{2r}(x)$
to row $i$ for $1\leq i \leq r$,
which adds a sign $(-1)^{r(r-1)/2}$
to the determinant $\omega_{2r}(x)$.
As $x\to 0^+$,
the resulting matrix decomposes into
four blocks of equal size of the form
\[\begin{pmatrix}
M_r + o(1) & O(\log x) \\
O(x) & \frac{-1}{x}M_r + O(\log x)
\end{pmatrix}\]
by direct evaluation and \eqref{eq:simple_I'xt}
in the left two blocks
and \eqref{eq:simple_Kxt} and \eqref{eq:simple_K'xt}
in the right.
This leads to
\[ \lim_{x\to 0^+} x^r\omega_{2r}(x)
= (-1)^{\frac{r(r+1)}{2}} {\det}^2 M_r. \]

\begin{remark}
Proposition \ref{prop:joint_pmatrix_minus} indeed
holds for $\omega_2(x)$ by the same analysis
if we set $\det N_0=1$;
it is consistent with the relation \eqref{eq:N-M} for $r=1$.
\end{remark}

\end{appendices}


\bibliography{cite}


\begin{thebibliography}{18}
\ifx \bisbn   \undefined \def \bisbn  #1{ISBN #1}\fi
\ifx \binits  \undefined \def \binits#1{#1}\fi
\ifx \bauthor  \undefined \def \bauthor#1{#1}\fi
\ifx \batitle  \undefined \def \batitle#1{#1}\fi
\ifx \bjtitle  \undefined \def \bjtitle#1{#1}\fi
\ifx \bvolume  \undefined \def \bvolume#1{\textbf{#1}}\fi
\ifx \byear  \undefined \def \byear#1{#1}\fi
\ifx \bissue  \undefined \def \bissue#1{#1}\fi
\ifx \bfpage  \undefined \def \bfpage#1{#1}\fi
\ifx \blpage  \undefined \def \blpage #1{#1}\fi
\ifx \burl  \undefined \def \burl#1{\textsf{#1}}\fi
\ifx \doiurl  \undefined \def \doiurl#1{\url{https://doi.org/#1}}\fi
\ifx \betal  \undefined \def \betal{\textit{et al.}}\fi
\ifx \binstitute  \undefined \def \binstitute#1{#1}\fi
\ifx \binstitutionaled  \undefined \def \binstitutionaled#1{#1}\fi
\ifx \bctitle  \undefined \def \bctitle#1{#1}\fi
\ifx \beditor  \undefined \def \beditor#1{#1}\fi
\ifx \bpublisher  \undefined \def \bpublisher#1{#1}\fi
\ifx \bbtitle  \undefined \def \bbtitle#1{#1}\fi
\ifx \bedition  \undefined \def \bedition#1{#1}\fi
\ifx \bseriesno  \undefined \def \bseriesno#1{#1}\fi
\ifx \blocation  \undefined \def \blocation#1{#1}\fi
\ifx \bsertitle  \undefined \def \bsertitle#1{#1}\fi
\ifx \bsnm \undefined \def \bsnm#1{#1}\fi
\ifx \bsuffix \undefined \def \bsuffix#1{#1}\fi
\ifx \bparticle \undefined \def \bparticle#1{#1}\fi
\ifx \barticle \undefined \def \barticle#1{#1}\fi
\bibcommenthead
\ifx \bconfdate \undefined \def \bconfdate #1{#1}\fi
\ifx \botherref \undefined \def \botherref #1{#1}\fi
\ifx \url \undefined \def \url#1{\textsf{#1}}\fi
\ifx \bchapter \undefined \def \bchapter#1{#1}\fi
\ifx \bbook \undefined \def \bbook#1{#1}\fi
\ifx \bcomment \undefined \def \bcomment#1{#1}\fi
\ifx \oauthor \undefined \def \oauthor#1{#1}\fi
\ifx \citeauthoryear \undefined \def \citeauthoryear#1{#1}\fi
\ifx \endbibitem  \undefined \def \endbibitem {}\fi
\ifx \bconflocation  \undefined \def \bconflocation#1{#1}\fi
\ifx \arxivurl  \undefined \def \arxivurl#1{\textsf{#1}}\fi
\csname PreBibitemsHook\endcsname

\bibitem[\protect\citeauthoryear{Fres\'{a}n et~al.}{2022}]{MR4432013}
\begin{barticle}
\bauthor{\bsnm{Fres\'{a}n}, \binits{J.}},
\bauthor{\bsnm{Sabbah}, \binits{C.}},
\bauthor{\bsnm{Yu}, \binits{J.-D.}}:
\batitle{Hodge theory of {K}loosterman connections}.
\bjtitle{Duke Math. J.}
\bvolume{171}(\bissue{8}),
\bfpage{1649}--\blpage{1747}
(\byear{2022})
\doiurl{10.1215/00127094-2021-0036}
\end{barticle}
\endbibitem

\bibitem[\protect\citeauthoryear{Fres\'{a}n et~al.}{2023}]{MR4578001}
\begin{barticle}
\bauthor{\bsnm{Fres\'{a}n}, \binits{J.}},
\bauthor{\bsnm{Sabbah}, \binits{C.}},
\bauthor{\bsnm{Yu}, \binits{J.-D.}}:
\batitle{Quadratic relations between {B}essel moments}.
\bjtitle{Algebra Number Theory}
\bvolume{17}(\bissue{3}),
\bfpage{541}--\blpage{602}
(\byear{2023})
\doiurl{10.2140/ant.2023.17.541}
\end{barticle}
\endbibitem

\bibitem[\protect\citeauthoryear{Broadhurst}{2016}]{MR3573666}
\begin{barticle}
\bauthor{\bsnm{Broadhurst}, \binits{D.}}:
\batitle{Feynman integrals, {L}-series and {K}loosterman moments}.
\bjtitle{Commun. Number Theory Phys.}
\bvolume{10}(\bissue{3}),
\bfpage{527}--\blpage{569}
(\byear{2016})
\doiurl{10.4310/CNTP.2016.v10.n3.a3}
\end{barticle}
\endbibitem

\bibitem[\protect\citeauthoryear{Bailey et~al.}{2008}]{MR2450513}
\begin{barticle}
\bauthor{\bsnm{Bailey}, \binits{D.H.}},
\bauthor{\bsnm{Borwein}, \binits{J.M.}},
\bauthor{\bsnm{Broadhurst}, \binits{D.}},
\bauthor{\bsnm{Glasser}, \binits{M.L.}}:
\batitle{Elliptic integral evaluations of {B}essel moments and applications}.
\bjtitle{J. Phys. A}
\bvolume{41}(\bissue{20}),
\bfpage{205203}--\blpage{46}
(\byear{2008})
\doiurl{10.1088/1751-8113/41/20/205203}
\end{barticle}
\endbibitem

\bibitem[\protect\citeauthoryear{Broadhurst and
  Mellit}{2016}]{Broadhurst:2016hbq}
\begin{barticle}
\bauthor{\bsnm{Broadhurst}, \binits{D.}},
\bauthor{\bsnm{Mellit}, \binits{A.}}:
\batitle{{Perturbative quantum field theory informs algebraic geometry}}.
\bjtitle{PoS}
\bvolume{LL2016},
\bfpage{079}
(\byear{2016})
\doiurl{10.22323/1.260.0079}
\end{barticle}
\endbibitem

\bibitem[\protect\citeauthoryear{Zhou}{2019}]{MR3902501}
\begin{barticle}
\bauthor{\bsnm{Zhou}, \binits{Y.}}:
\batitle{Hilbert transforms and sum rules of {B}essel moments}.
\bjtitle{Ramanujan J.}
\bvolume{48}(\bissue{1}),
\bfpage{159}--\blpage{172}
(\byear{2019})
\doiurl{10.1007/s11139-017-9945-y}
\end{barticle}
\endbibitem

\bibitem[\protect\citeauthoryear{Zhou}{2022}]{MR4437425}
\begin{barticle}
\bauthor{\bsnm{Zhou}, \binits{Y.}}:
\batitle{{$\Bbb{Q}$}-linear dependence of certain {B}essel moments}.
\bjtitle{Ramanujan J.}
\bvolume{58}(\bissue{3}),
\bfpage{723}--\blpage{746}
(\byear{2022})
\doiurl{10.1007/s11139-021-00416-9}
\end{barticle}
\endbibitem

\bibitem[\protect\citeauthoryear{Fres\'{a}n et~al.}{2023}]{MR4601771}
\begin{barticle}
\bauthor{\bsnm{Fres\'{a}n}, \binits{J.}},
\bauthor{\bsnm{Sabbah}, \binits{C.}},
\bauthor{\bsnm{Yu}, \binits{J.-D.}}:
\batitle{Quadratic relations between periods of connections}.
\bjtitle{Tohoku Math. J. (2)}
\bvolume{75}(\bissue{2}),
\bfpage{175}--\blpage{213}
(\byear{2023})
\doiurl{10.2748/tmj.20211209}
\end{barticle}
\endbibitem

\bibitem[\protect\citeauthoryear{Zhou}{2018}]{MR3824579}
\begin{barticle}
\bauthor{\bsnm{Zhou}, \binits{Y.}}:
\batitle{Wro\'{n}skian factorizations and {B}roadhurst-{M}ellit determinant
  formulae}.
\bjtitle{Commun. Number Theory Phys.}
\bvolume{12}(\bissue{2}),
\bfpage{355}--\blpage{407}
(\byear{2018})
\doiurl{10.4310/cntp.2018.v12.n2.a5}
\end{barticle}
\endbibitem

\bibitem[\protect\citeauthoryear{Bloch and Esnault}{2004}]{MR2143558}
\begin{barticle}
\bauthor{\bsnm{Bloch}, \binits{S.}},
\bauthor{\bsnm{Esnault}, \binits{H.}}:
\batitle{Homology for irregular connections}.
\bjtitle{J. Th\'{e}or. Nombres Bordeaux}
\bvolume{16}(\bissue{2}),
\bfpage{357}--\blpage{371}
(\byear{2004})
\end{barticle}
\endbibitem

\bibitem[\protect\citeauthoryear{Katz}{1990}]{MR1081536}
\begin{bbook}
\bauthor{\bsnm{Katz}, \binits{N.M.}}:
\bbtitle{Exponential Sums and Differential Equations}.
\bsertitle{Annals of Mathematics Studies},
vol. \bseriesno{124},
p. \bfpage{430}.
\bpublisher{Princeton University Press},
\blocation{Princeton, NJ}
(\byear{1990}).
\doiurl{10.1515/9781400882434}
\end{bbook}
\endbibitem

\bibitem[\protect\citeauthoryear{}{}]{NIST:DLMF}
\begin{botherref}
{\it NIST Digital Library of Mathematical Functions}.
\url{https://dlmf.nist.gov/}, Release 1.2.1 of 2024-06-15.
F.~W.~J. Olver, A.~B. {Olde Daalhuis}, D.~W. Lozier, B.~I. Schneider, R.~F.
  Boisvert, C.~W. Clark, B.~R. Miller, B.~V. Saunders, H.~S. Cohl, and M.~A.
  McClain, eds.
\url{https://dlmf.nist.gov/}
\end{botherref}
\endbibitem

\bibitem[\protect\citeauthoryear{Watson}{1995}]{MR1349110}
\begin{bbook}
\bauthor{\bsnm{Watson}, \binits{G.N.}}:
\bbtitle{A Treatise on the Theory of {B}essel Functions}.
\bsertitle{Cambridge Mathematical Library},
p. \bfpage{804}.
\bpublisher{Cambridge University Press},
\blocation{Cambridge}
(\byear{1995}).
\bcomment{Reprint of the second (1944) edition}
\end{bbook}
\endbibitem

\bibitem[\protect\citeauthoryear{Kim}{2005}]{kim2005note}
\begin{botherref}
\oauthor{\bsnm{Kim}, \binits{T.}}:
A note on the alternating sums of powers of consecutive integers.
arXiv:math/0508233
(2005)
\end{botherref}
\endbibitem

\bibitem[\protect\citeauthoryear{Han}{2020}]{MR4105523}
\begin{barticle}
\bauthor{\bsnm{Han}, \binits{G.-N.}}:
\batitle{Hankel continued fractions and {H}ankel determinants of the {E}uler
  numbers}.
\bjtitle{Trans. Amer. Math. Soc.}
\bvolume{373}(\bissue{6}),
\bfpage{4255}--\blpage{4283}
(\byear{2020})
\doiurl{10.1090/tran/8031}
\end{barticle}
\endbibitem

\bibitem[\protect\citeauthoryear{Borwein and Salvy}{2008}]{MR2433887}
\begin{barticle}
\bauthor{\bsnm{Borwein}, \binits{J.M.}},
\bauthor{\bsnm{Salvy}, \binits{B.}}:
\batitle{A proof of a recurrence for {B}essel moments}.
\bjtitle{Experiment. Math.}
\bvolume{17}(\bissue{2}),
\bfpage{223}--\blpage{230}
(\byear{2008})
\end{barticle}
\endbibitem

\bibitem[\protect\citeauthoryear{Bronstein et~al.}{1997}]{MR1809982}
\begin{bchapter}
\bauthor{\bsnm{Bronstein}, \binits{M.}},
\bauthor{\bsnm{Mulders}, \binits{T.}},
\bauthor{\bsnm{Weil}, \binits{J.-A.}}:
\bctitle{On symmetric powers of differential operators}.
In: \bbtitle{Proceedings of the 1997 {I}nternational {S}ymposium on {S}ymbolic
  and {A}lgebraic {C}omputation ({K}ihei, {HI})},
pp. \bfpage{156}--\blpage{163}.
\bpublisher{ACM},
\blocation{New York}
(\byear{1997}).
\doiurl{10.1145/258726.258771}
\end{bchapter}
\endbibitem

\bibitem[\protect\citeauthoryear{Vanhove}{2014}]{MR3330287}
\begin{bchapter}
\bauthor{\bsnm{Vanhove}, \binits{P.}}:
\bctitle{The physics and the mixed {H}odge structure of {F}eynman integrals}.
In: \bbtitle{String-{M}ath 2013}.
\bsertitle{Proc. Sympos. Pure Math.},
vol. \bseriesno{88},
pp. \bfpage{161}--\blpage{194}.
\bpublisher{Amer. Math. Soc.},
\blocation{Providence, RI}
(\byear{2014}).
\doiurl{10.1090/pspum/088/01455}
\end{bchapter}
\endbibitem

\end{thebibliography}

\end{document}